\documentclass[12pt,reqno]{amsart}
\usepackage[utf8]{inputenc}
\usepackage[T1]{fontenc}
\usepackage[usenames, dvipsnames]{color}
\usepackage{ulem}

\usepackage{dsfont, amsfonts, amsmath, amssymb,amscd, stmaryrd, latexsym, amsthm, dsfont}
\usepackage[frenchb,english]{babel}
\usepackage{enumerate}
\usepackage{longtable}
\usepackage{geometry}
\usepackage{float}
\usepackage{tikz}
\usetikzlibrary{shapes,arrows}
\geometry{margin=3.5cm,head=0.2cm,headsep=6pt,foot=0.5cm}

\usepackage{float}
\usepackage{tikz}
\usepackage{xypic}
\usetikzlibrary{shapes,arrows}

\usepackage{pifont}
\usepackage{float}
\usepackage{tikz}
\usepackage{xypic}
\usetikzlibrary{shapes,arrows}

\newtheorem{theorem}{Theorem}[section]
\newtheorem{lemma}[theorem]{Lemma}
\newtheorem{proposition}[theorem]{Proposition}
\newtheorem{corollary}[theorem]{Corollary}

\theoremstyle{remark}
\newtheorem{remark}[theorem]{\bf Remark}

\newtheorem{definition}[theorem]{\bf Definition}

\usepackage[pagebackref]{hyperref}
\renewcommand*{\backref}[1]{}\renewcommand*{\backrefalt}[4]{\ifcase #1 (\tt not cited)\or (\tt cited on page~#2)\else (\tt cited on pages~#2)\fi}

\usepackage{tikz-cd}
\usepackage{hyperref}
\hypersetup{
	colorlinks=true,
	urlcolor=blue,
	citecolor=blue,linkcolor=blue}
\def\NN{\mathbb{N}}
\def\RR{\mathds{R}}
\def\HH{I\!\! H}
\def\QQ{\mathbb{Q}}
\def\CC{\mathds{C}}
\def\ZZ{\mathbb{Z}}
\def\DD{\mathds{D}}
\def\OO{\mathcal{O}}
\def\kk{\mathds{k}}
\def\KK{\mathbb{K}}
\def\ho{\mathcal{H}_0^{\frac{h(d)}{2}}}
\def\LL{\mathbb{L}}
\def\L{\mathds{k}_2^{(2)}}
\def\M{\mathds{k}_2^{(1)}}
\def\k{\mathds{k}^{(*)}}
\def\l{\mathds{L}}

\usepackage{multirow}

\def\kk{\mathds{k}}
\begin{document}
	
	\def\NN{\mathbb{N}}
	\def\RR{\mathds{R}}
	\def\HH{I\!\! H}
	\def\QQ{\mathbb{Q}}
	\def\CC{\mathds{C}}
	\def\ZZ{\mathbb{Z}}
	\def\DD{\mathds{D}}
	\def\OO{\mathcal{O}}
	\def\kk{\mathds{k}}
	\def\KK{\mathbb{K}}
	\def\ho{\mathcal{H}_0^{\frac{h(d)}{2}}}
	\def\LL{\mathbb{L}}
	\def\L{\mathds{k}_2^{(2)}}
	\def\M{\mathds{k}_2^{(1)}}
	\def\k{\mathds{k}^{(*)}}
	\def\l{\mathds{L}}
	\def\2r{\mathrm{rank}}
	\def\rg{\mathrm{rank}}
	\def\C{\mathrm{C}}
	\def\Y{\mathbf{Y}}
	\def\V{\mathbf{V}}
	\def\vep{\varepsilon}

	\selectlanguage{english}
	
	
	\title[Greenberg's conjecture and Iwasawa module]{Greenberg's conjecture and Iwasawa module of Real biquadratic fields II}

	\author[M. M. Chems-Eddin]{M. M. Chems-Eddin}
	\address{Mohamed Mahmoud CHEMS-EDDIN: Department of Mathematics, Faculty of Sciences Dhar El Mahraz, Sidi Mohamed Ben Abdellah University, Fez,  Morocco}
	\email{2m.chemseddin@gmail.com}
	
	\author[H. El Mamry]{H. El Mamry}
	\address{Hamza EL MAMRY: Departement of Mathematics, Faculty of Sciences Dhar El Mahraz,  Sidi Mohamed Ben Abdellah University, Fez, Morocco}
	\email{Hamza.elmamry@usmba.ac.ma}

	\subjclass[2010]{11R29; 11R23;   11R18; 11R20.}
	\keywords{Real Biquadratic fields, Iwasawa Module, Cyclotomic $\ZZ_2$-extension}

	\begin{abstract}
		In this paper we are interested in the stability of the $2$-rank of the class group in the cyclotomic $\ZZ_2$-extension of real biquadratic fields.
		In fact, we give  several   families of real biquadratic  fields $K$ such that $\rg(A(K)) =\rg(A_\infty(K))$ and  $\rg(A(K))\leq 3$, where $A(K)$ and $A_\infty(K)$ 
		are the $2$-class group and the $2$-Iwasawa module of $K$ respectively. Moreover, Greenberg's conjecture is verified for some new families of number fields; in particular, we determine the complete list   of all real biquadratic fields with trivial $2$-Iwasawa module. This work is a continuation of M. M. Chems-Eddin,	Greenberg's conjecture and Iwasawa module of real biquadratic fields I, J. Number Theory, 281  (2026),  224-266.
	\end{abstract}
	
	\selectlanguage{english}
	
	\maketitle


	\section{\bf Introduction}  Let  $k$ be a   number field and $\ell$ a prime number. Let $A_\ell(k)$  or simply $A(k)$ when $\ell=2$  (resp.   $E_k$) denote the $\ell$-class group (resp.   the unit group) of $k$.
	A   $\ZZ_\ell$-extension of $k$ is an infinite extension of $k$ denoted by  $k_\infty$ such that $\mathrm{Gal}(k_\infty/k)\simeq \ZZ_\ell$, where $\ZZ_\ell$ is the ring of $\ell$-adic numbers. For each $n\geq 1$, the extension $k_\infty/k$ contains a unique field denoted by $k_n$ and called the $n$th layer of the cyclotomic $\ZZ_\ell$-extension of $k$   of degree $\ell^n$. Furthermore, we have:
	$$k=k_0\subset k_1 \subset k_2 \subset\cdots \subset k_n \subset \cdots  \subset k_\infty=\bigcup_{n\geq 0} k_n.$$
	In particular, for an odd prime number $\ell$, let $ \QQ_{\ell, n}$   be the unique real subfield  of the cyclotomic field  $\QQ(\zeta_{\ell^{n+1}}) $ of degree $\ell^n$ over $\QQ$,  
	and for $\ell=2$,   let $  \QQ_{2, n}$ be the field $\QQ(2\cos( {2\pi}/{2^{n+2}}))$, for all $n\geq 1$.
	Then $k_\infty= \bigcup k_n$, where $k_n= k\QQ_{\ell, n}$,  is called the   cyclotomic $\ZZ_\ell$-extension of $k$.  
	The inverse limit $A(k_\infty):=\varprojlim A_\ell(k_n)$
	with respect to the norm maps is called the Iwasawa module for $k_\infty/k$. A spectacular result due to Iwasawa, affirms that  there exist integers $\lambda_k$,  $\mu_k\geq 0$ and  $\nu_k$, all independent of $n$, and  an integer $n_0$ such that:
	\begin{eqnarray}\label{iwasawa}h_\ell(k_n)=\ell^{\lambda_k n+\mu_k \ell^n+\nu_k},\end{eqnarray}
	for all $n\geq n_0$.  Here $h_\ell(k)$ denote the $\ell$-class number of a number field $k$. The integers $\lambda_k$,  $\mu_k$ and  $\nu_k$ are called the Iwasawa invariants of $k_\infty/k$
	(cf. \cite{iwasawa59} and \cite{washington1997introduction} for more details).
	
	In 1976,
	Greenberg conjectured that the invariants $\mu$ and $\lambda$ must be equal to $0$ for cyclotomic $\mathbb Z_\ell$-extension of totally real number
	fields (cf. \cite{Greenberg}).
	
	It was further proved by Ferrero and Washington     that the $\mu$-invariant always vanishes
	for the cyclotomic $\mathbb Z_\ell$-extension when the number field is abelian over the field $\mathbb{Q}$ of rational numbers (cf. \cite{FerreroWashington}). In other words, this  conjecture 
	is equivalent to $h_\ell(k_n)$ being uniformly bounded.
	Greenberg's conjecture is still open, with partial progress made by considering particular values of $\ell$ and specific families of number fields (especially the real quadratic fields), for example, we refer the reader to   \cite{7ChattopadhyayLaxmiSaikia,mizusawa3,mizusawa5,mizusawa2,mizusawa4,mouhib-mova}.
	Moreover, very recently some authors have taken interest  in the investigation of Greenberg's conjecture  for some particular families of real biquadratic fields. In fact, 
	Chems-Eddin, El Mahi and Ziane investigated this conjecture for  biquadratic fields of the form   $ \QQ(\sqrt{pq_1}, \sqrt{q_1q_2})$
	where $p\equiv 5\pmod 8$ and $q_1\equiv q_2\equiv3\pmod 4$  are three prime numbers such that $\ \left( \frac{p}{q_1} \right)=\ \left( \frac{p}{q_2} \right)$ (cf. \cite{chems24,Elmahi1,Elmahi2,Elmahi3}). The cyclotomic $\ZZ_2$-extension of   these fields were also the subject of a recent study by Azizi, Jerrari     and  Sbai   (cf. \cite{jerrarisbaiazizi}).
	Furthermore, in   \cite{Laxmi}
	Laxmi and      Saikia investigated this conjecture for biquadratic fields of the form   $ \QQ(\sqrt{p}, \sqrt{r})$ with $p \equiv 9 \mod{16}, \ r \equiv 3 \mod 4$ are two prime numbers such that $\ \left( \frac{p}{r} \right) = -1$, and $\left(\frac{2}{p}\right)_{4} = -1$.
	Thereafter, an extensive general study of real biquadratic fields is made in \cite{ChemseddinGreenbergConjectureI}, and this paper is a continuation of this work.
	Consider the following definition.
	\begin{definition}
		A number field $k$  is said QO-field if it is a quadratic extension of certain number field $k'$ whose class number is odd. We shall call $k'$ a base field of the QO-field $k$ and that $k/k'$ is a QO-extension. An extension of  QO-fields is an extension of number fields $L/M$ such that $L$ and $M$ are QO-fields.	\end{definition}
	
	In   \cite{ChemseddinGreenbergConjectureI}, the first author named considered  the following problems.
	\begin{center}
		{\bf  Problems: }
	\end{center}
	Let $K$ be a real  biquadratic number field such that $K_1/K$ is a ramified  extension of QO-fields, where $K_1=K(\sqrt{2})$ is the first layer of the cyclotomic $\ZZ_2$-extension of $K$. Consider the following problems:

	\begin{center}
		\noindent  {  Problem 1}: 	What are the real biquadratic fields $K$  such that  
		$$\rg(A(K_\infty))\leq 2   \text{ and   }    \rg( A(K_\infty))=\rg(A(K))?$$
	\end{center}
	\begin{center}
		\noindent  { Problem 2}: 	What is the structure of $A(K_\infty)$?$\qquad\qquad\qquad\quad$
		
	\end{center}

	\medskip
	
	According to \cite[p. 227]{ChemseddinGreenbergConjectureI}, the real biquadratic fields $K$ satisfying Problem 1 is in one of the following six forms:

	\begin{enumerate}[$    A)$]
		\item   $K=\QQ(\sqrt{q},\sqrt{d})$ where $d > 1$ is an odd  positive  square-free integer that is not divisible by $q$.

		\item  $K=\QQ(\sqrt{2q},\sqrt{d})$ where    $q\equiv3 \pmod 4$   and $d\equiv 1 \pmod 4$ is a  positive square-free integer.

		\item  $K=\QQ(\sqrt{q_1q_2},\sqrt{d})$ where  $q_1\equiv3 \pmod 4$,  $q_2\equiv3 \pmod 8$ are two prime numbers and $d\equiv 1\pmod 4$ is a positive    square-free integer that is not divisible by   $q_1q_2$.

		\item  $K=\QQ(\sqrt{q_1q_2},\sqrt{d})$ or $\QQ(\sqrt{q_1q_2},\sqrt{2d})$,  where  $q_1\equiv7 \pmod 8$,  $q_2\equiv3 \pmod 8$ are two prime numbers and $d\equiv 3\pmod 4$ is a positive        square-free integer that is not divisible by   $q_1q_2$.

		\item  $K=\QQ(\sqrt{q_1q_2},\sqrt{d})$ or $\QQ(\sqrt{q_1q_2},\sqrt{2d})$, where $q_1\equiv3 \pmod 8$,  $q_2\equiv3 \pmod 8$ are two prime numbers  and    $d\equiv 3 \pmod 4$ is a  positive square-free integer  that is not divisible by   $q_1q_2$.

		\item  $K=\QQ(\sqrt{p},\sqrt{d})$ where $p $ and $d$  satisfy one of the following conditions:
		\begin{enumerate}[$    a)$]
			
			\item  $p\equiv1 \pmod 4$   and $d=q_1q_2$ for two prime numbers $q_1\equiv q_2\equiv 3\pmod 4$  with   $\left(\frac{q_2}{p}\right)=-1$,
			\item  $p\equiv1 \pmod 4$   and $d=q_1q_2$ for two prime numbers $q_1\equiv 3\pmod 4$ and   $q_2\equiv 3 \pmod 8$,
			\item $p\equiv1 \pmod 4$   and $d\equiv 3 \pmod 4$ are two prime numbers such that $\left(\frac{p}{d}\right)=-1$ or $p\equiv5 \pmod 8$,
			\item $p\equiv1 \pmod 4$   and $d\equiv 1 \pmod 4$ are two prime numbers such that$\left(\frac{p}{d}\right)=-1$ or $[\left(\frac{p}{d}\right)=1$ and $\left(\frac{p}{d}\right)_4\not=\left(\frac{d}{p}\right)_4]$.
		\end{enumerate}

	\end{enumerate}
	Here, $p$ denotes a prime  number  congruent to $1\pmod 4$ and $q$, $q_1$, $q_2$ denote three primes numbers congruent to $3\pmod 4$. Let $r$ and $s$ be two prime numbers. Let us name $L$ any   real biquadratic field   of the form  
	\begin{center}
		{$ \displaystyle L:=\QQ(\sqrt{q_1q_2},\sqrt{r})\text{ or }\QQ(\sqrt{q_1q_2},\sqrt{rs}) \text{ where }   r\equiv 1\pmod8   \text{ and }  \left(\frac{q_1q_2}{r}\right)=1$}
	\end{center}

	\medskip
	
We note that 	\cite[Theorem 1.4]{ChemseddinGreenbergConjectureI} characterizes all real biquadratic fields of the forms $A)$, $B)$ and $C)$ that satisfy Problem 1.
	As a continuation of  the previous investigations, we prove the following first main result.
	
	\medskip
	
	\begin{theorem}[{\bf The First Main Theorem}]\label{maintheorem}	Let $K$ be a real biquadratic number field that is of the form $D)$ or $F)$ with $K\not=L$.  
		Then  $\rg(A(K_\infty))\leq 2$  and  $ \rg(A(K_\infty))=\rg( A(K))$ if and only if $K$ takes one of the following forms:
		\begin{enumerate}[$1)$]
			\item $K=\QQ(\sqrt{q_1q_2},\sqrt{\delta r})$, where  $q_1\equiv 7\pmod 8$ and $q_2\equiv 3\pmod 8$ and $r$ are prime numbers, $\delta\in\{1,q_1,q_2\}$ is such that 
			$\delta r\equiv 3\pmod4$ and we have one of the following congruence conditions:
			\begin{enumerate}[$\C1:$]
				\item  $r\equiv 3\pmod 8$ and $\left(\frac{q_1q_2}{ r}\right)=-1$,
				\item $r\equiv 3\pmod 8$ and $\left(\frac{q_1q_2}{ r}\right)=\left(\frac{q_1}{ r}\right)=1$,
				
				\item $r\equiv 5\pmod 8$ and  $\left(\frac{q_1}{ r}\right)=-1$.
			\end{enumerate}
			$\textbf{In this case, we have: } \rg(A_\infty(K))  =1.$
			
			\item $K=\QQ(\sqrt{q_1q_2},\sqrt{q_1 r})$, where  $q_1\equiv 7\pmod 8$ and $q_2\equiv 3\pmod 8$ and $r\equiv 1\pmod 4$ are prime numbers such that
			$\left(\frac{q_1q_2}{ r}\right)=\left(\frac{q_1}{ r}\right)=1$.
			
			\noindent\textbf{In this case, we have:} $\rg(A_\infty(K))  =2.$

			\item $K=\QQ(\sqrt{q_1q_2},\sqrt{\delta rs})$, where  $q_1\equiv 7\pmod 8$ and $q_2\equiv 3\pmod 8$, $r$ and $s$ are prime numbers, $\delta\in\{1,q_1,q_2\}$ is such that 
			$\delta rs\equiv 3\pmod4$,	$ \left(\frac{q_1q_2}{s}\right)=\left(\frac{q_1q_2}{r}\right)=-1$ and we have one of the following  conditions:
			\begin{enumerate}[$\C1:$]
				\item  $r\equiv 5\pmod 8$, $s\equiv 3\pmod 8$ and  $\left(\frac{q_1}{r}\right)=-1$,
				
				\item $r\equiv 3\pmod 8$, $s\equiv 3\pmod 8$ with   $\left(\frac{q_1}{r}\right)\not=\left(\frac{q_1}{s}\right)$.

			\end{enumerate}
			\textbf{In this case, we have: } $\rg(A_\infty(K))  =2.$
			\item $K=\QQ(\sqrt{q_1q_2},\sqrt{\delta rs})$, where  $q_1\equiv 7\pmod 8$ and $q_2\equiv 3\pmod 8$, $r$ and $s$ are prime numbers, $\delta\in\{1,q_1,q_2\}$ is such that 
			$\delta rs\equiv 3\pmod4$,	$ \left(\frac{q_1q_2}{r}\right)=-\left(\frac{q_1q_2}{s}\right)=-1$ and we have one of the following  conditions:

			\begin{enumerate}[$\C1:$]
				\item  $r\equiv s\equiv 3\pmod 8$  and  $\left(\frac{q_1}{r}\right)\neq\left(\frac{q_1}{s}\right)$,
				\item  $r\equiv 3\pmod 8$, $s\equiv5\pmod8$ and   $\left(\frac{q_1}{s}\right)=-1$,
				\item  $r\equiv 5\pmod 8$, $s\equiv3\pmod8$ and   $\left(\frac{q_1}{r}\right)=-1$.

			\end{enumerate}
			\textbf{In this case, we have: } $\rg(A_\infty(K))  =2.$

			\item $K=\QQ(\sqrt{q_1q_2},\sqrt{2\delta r})$, where  $q_1\equiv 7\pmod 8$ and $q_2\equiv 3\pmod 8$, $r$ is a prime number, $\delta\in\{1,q_1,q_2\}$ is such that 
			$\delta r\equiv 3\pmod4$	 and we have one of the following   conditions:

			\begin{enumerate}[$\C1:$]
				\item  $r\equiv 3\pmod 8$,
				\item  $r\equiv 5\pmod 8$  and    $\left(\frac{q_1}{r}\right)=-1$.
			\end{enumerate}
			\textbf{In this case, we have: } $\rg(A_\infty(K))  =1.$
			
			\item $K=\QQ(\sqrt{q_1q_2},\sqrt{2q_1 r})$, where  $q_1\equiv 7\pmod 8$ and $q_2\equiv 3\pmod 8$, $r$ is a prime number such that 
			$r\equiv 5\pmod 8$  and  $\left(\frac{q_1q_2}{r}\right)=\left(\frac{q_1}{r}\right)=1$.

			\noindent	\textbf{In this case, we have: } $\rg(A_\infty(K))  =2.$

			\item $K=\QQ(\sqrt{q_1q_2},\sqrt{2\delta rs})$, where  $q_1\equiv 7\pmod 8$ and $q_2\equiv 3\pmod 8$, $r$ and $s$ are  prime numbers, $\delta\in\{1,q_1,q_2\}$ is such that 
			$\delta rs\equiv 3\pmod8$,      $\left(\frac{q_1q_2}{r}\right)=\left(\frac{q_1q_2}{s}\right)=-1$ and we have one of the following   conditions:\begin{enumerate}[$\C1:$]
				\item $r\equiv 5\pmod8$, $s\equiv 3\pmod8$ and $\left(\frac{q_1}{r}\right)=-1$,
				\item $r\equiv s \equiv 3\pmod8$  and $\left(\frac{q_1}{r}\right)\neq \left(\frac{q_1}{s}\right) $.
			\end{enumerate}

			\noindent\textbf{In this case, we have: } $\rg(A_\infty(K))  =2.$

			\item $K=\QQ(\sqrt{q_1q_2},\sqrt{2\delta rs})$, where  $q_1\equiv 7\pmod 8$ and $q_2\equiv 3\pmod 8$, $r$ and $s$ are  prime numbers, $\delta\in\{1,q_1,q_2\}$ is such that 
			$\delta rs\equiv 3\pmod8$, and    $\left(\frac{q_1q_2}{r}\right)=-\left(\frac{q_1q_2}{s}\right)=-1$ and we have one of the following   conditions:\begin{enumerate}[$\C1:$]
				\item $r\equiv s \equiv 3\pmod8$  and $\left(\frac{q_1}{r}\right)\neq \left(\frac{q_1}{s}\right) $,
				\item $r\equiv 5\pmod8$, $s\equiv 3\pmod8$ and $\left(\frac{q_1}{r}\right)=-1$,
				\item $r\equiv 3\pmod8$, $s\equiv 5\pmod8$ and $\left(\frac{q_1}{s}\right)=-1$.
			\end{enumerate}

			\noindent	\textbf{In this case, we have: } $\rg(A_\infty(K))  =2.$

			\item $K=\QQ(\sqrt{q_1q_2},\sqrt{2\delta rs})$, where  $q_1\equiv 7\pmod 8$ and $q_2\equiv 3\pmod 8$, $r$ and $s$ are  prime numbers, $\delta\in\{1,q_1,q_2\}$ is such that 
			$\delta rs\equiv 3\pmod8$, and    $\left(\frac{q_1q_2}{r}\right)=\left(\frac{q_1q_2}{s}\right)=1$ and one of the following   conditions :\begin{enumerate}[$\bullet$]
				\item $r\equiv s \equiv 3\pmod8$ and $\left(\frac{q_1}{r}\right)=-\left(\frac{q_1}{s}\right)=1$,
				\item $r\equiv 5\pmod8$ , $s\equiv 3\pmod8$ and $\left(\frac{q_1}{r}\right)=-1$.
			\end{enumerate}

		\noindent	\textbf{In this case, we have: } $\rg(A_\infty(K))  =2.$

			\item $K=\QQ(\sqrt{p},\sqrt{q_1q_2})$, where  $p\equiv1\pmod4$, $q_1\equiv 3 \pmod4$, $q_2\equiv 3 \pmod8$ are distinct prime numbers such that   $\left(\frac{q_2}{p}\right)=-1$  and we have one of the following   conditions:
			\begin{enumerate}[$\C1:$]
				\item $p\equiv5\pmod8, q_1\equiv3\pmod8 $ and $\left(\frac{q_1}{p}\right)=1$,
				\item $p\equiv5\pmod8, q_1\equiv7\pmod8 $ and $\left(\frac{q_1}{p}\right)=-1$.
			\end{enumerate}
			$\textbf{In this case, we have: } \rg(A_\infty(K))  =0.$

			\item $K=\QQ(\sqrt{p},\sqrt{d})$, where  $p\equiv 5\pmod8$ and $d\equiv  3 \pmod4$ are two prime numbers.
			
			\noindent	\textbf{In this case, we have: } $\rg(A_\infty(K))  =0.$

			
			
			%
			\item 	$K=\QQ(\sqrt{p},\sqrt{d})$, where $p\equiv d \equiv 1 \pmod4$ are two prime numbers satisfying the following conditions:
			\begin{enumerate}[$a)$]
				\item $\left(\frac{p}{d}\right)=-1$ or $[\left(\frac{p}{d}\right)=1$ and $\left(\frac{p}{d}\right)_4\not=\left(\frac{d}{p}\right)_4]$,
				\item     
				$h_2(2pd)=4$ and
				\item  $q(k_1)=1$, where $k_1=\QQ(\sqrt{pd}, \sqrt{2})$.
			\end{enumerate}
			\textbf{In this case, we have: } $\rg(A_\infty(K))  =0.$
		\end{enumerate}

	\end{theorem}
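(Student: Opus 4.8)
The plan is to treat Problem~1 for each of the twelve configurations by separating it into two independent computations: the exact determination of $\rg(A(K))$ at the ground floor, and the determination of the stable $2$-rank $\rg(A_\infty(K))$ in the cyclotomic tower; the asserted equivalence then amounts to matching these two quantities and checking the bound $\rg(A(K))\le 2$. Since the forms $D)$ and $F)$ are finite unions over $\delta\in\{1,q_1,q_2\}$ and over the admissible factorizations of the conductor (a single prime $r$, a product $rs$, each possibly carrying a factor $2$), I would organize the argument by these sub-forms, noting that cases $1)$--$9)$ realize form $D)$ and cases $10)$--$12)$ form $F)$. Inside each sub-form I would distinguish according to the residues of $r,s$ modulo $8$ together with the Legendre symbols $\left(\frac{q_1}{r}\right)$, $\left(\frac{q_1}{s}\right)$, $\left(\frac{q_1q_2}{r}\right)$, $\left(\frac{q_1q_2}{s}\right)$, which are precisely the data recorded in the hypotheses of each case.

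For the first computation I would use genus theory together with Kuroda's class number formula for real biquadratic fields. Concretely, for $K=\QQ(\sqrt a,\sqrt b)$ one identifies the three quadratic subfields $\QQ(\sqrt a),\QQ(\sqrt b),\QQ(\sqrt{ab})$, reads off their $2$-class groups from the classical genus-theoretic formulas, and combines them through $h_2(K)=\tfrac14\,q(K)\,h_2(k_1)h_2(k_2)h_2(k_3)$, where the unit index $q(K)=[E_K:E_{k_1}E_{k_2}E_{k_3}]$ is extracted from the explicit fundamental systems of units available in the literature for fields of the present shape. The rank $\rg(A(K))$ is then obtained from the ambiguous class number formula applied to a well-chosen quadratic subextension, the ramification and splitting of the relevant primes being governed exactly by the residue symbols appearing in the listed conditions.

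For the second computation I would pass to the first layer $K_1=K(\sqrt2)$ and exploit that, by hypothesis, $K_1/K$ is a ramified extension of QO-fields, so that the base-field class numbers are odd and the descent is controlled. Computing $\rg(A(K_1))$ by the same machinery and comparing it with $\rg(A(K))$, I would invoke the stabilization criterion of \cite{ChemseddinGreenbergConjectureI} (a Fukuda-type result adapted to the QO setting, using that the unique prime above $2$ is totally ramified in $K_\infty/K$ from the bottom) to conclude that the $2$-rank is already constant along the whole tower, whence $\rg(A_\infty(K))=\rg(A(K))$ and the displayed value $0$, $1$ or $2$. The assumption $K\ne L$ removes exactly the degenerate family ($r\equiv1\pmod 8$ with $\left(\frac{q_1q_2}{r}\right)=1$) for which this stabilization argument fails, while the excluded symbol configurations are precisely those for which either $\rg(A(K))$ exceeds $2$ or the rank strictly grows in $K_1/K$; dealing with these yields the ``only if'' direction.

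The main obstacle I anticipate is twofold. On the one hand, the unit-index and norm-residue computations underlying both Kuroda's formula and the stabilization criterion are delicate: one must track the behaviour of the fundamental units under the relevant norm maps and, in the cases with $\rg(A_\infty(K))=2$, verify that no further capitulation forces the rank down, nor that ramification forces it up, in higher layers. On the other hand, the argument is a finite but large case distinction, and the real work is to prove that the particular combinations of residues modulo $8$ and Legendre symbols singled out in the twelve cases are simultaneously exhaustive and mutually exclusive, i.e.\ that they carve out exactly the fields satisfying both $\rg(A(K_\infty))\le 2$ and $\rg(A(K_\infty))=\rg(A(K))$.
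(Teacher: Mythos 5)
Your proposal follows essentially the same route as the paper: the authors compute $\rg(A(K))$ via the ambiguous class number formula over the QO base $\QQ(\sqrt{q_1q_2})$ (Lemmas \ref{lemmaq1q2Bd=3mod4} and \ref{lemma2qC}), compute $\rg(A(K_1))$ for the first layer $K_1=K(\sqrt2)$ via norm-residue/unit-index computations and Kuroda's formula (Lemma \ref{realTruquad2-rankq1q1} and, for form $F)$, Lemmas \ref{castrivial1}--\ref{castrivialq1q2}), and then conclude stabilization by Fukuda's theorem exactly as you describe. The only cosmetic difference is that the stabilization criterion is Fukuda's theorem itself rather than a QO-adapted variant from the predecessor paper.
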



	\medskip
	
	The plan of this paper is the following. In Section \ref{Sec.1} we prove Theorem \ref{maintheorem}. In Section \ref{Sec.2} we give some families or real biquadratic fields such  
	that $ \rg(A(K_\infty))=\rg( A(K))=3$.  
	The last section is dedicated to the investigation of the structure of the $2$-Iwasawa of real biquadratic fields. In fact, we give the list of all real biquadratic field with trivial $2$-Iwasawa module. Moreover, we give some new families of real biquadratic fields that satisfy Greenberg's Conjecture.

	\section{\bf The proof of   Theorem \ref{maintheorem}}\label{Sec.1}
	
Let us start by collecting some preliminary results that will be used later.	
	\subsection{Prerequisites }  Let us start by recalling the following useful results.

	\begin{lemma}[\cite{Ku-50}]\label{wada's f.}
		Let $k$ be a multiquadratic number field of degree $2^n$, $n\geq 2$,  and $k_i$ be the $s=2^n-1$ quadratic subfields of $k$. Then
		$$h(k)=\frac{1}{2^v}q(k)\prod_{i=1}^{s}h(k_i),$$
		where  $ q(k)=[E_k: \prod_{i=1}^{s}E_{k_i}]$ and   $$     v=\left\{ \begin{array}{cl}
			n(2^{n-1}-1); &\text{ if } k \text{ is real, }\\
			(n-1)(2^{n-2}-1)+2^{n-1}-1 & \text{ if } k \text{ is imaginary.}
		\end{array}\right.$$
	\end{lemma}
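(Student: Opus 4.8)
The plan is to derive the formula from the analytic class number formula combined with the factorization of the Dedekind zeta function afforded by the Galois structure $\mathrm{Gal}(k/\QQ)\simeq(\ZZ/2\ZZ)^{n}$. First I would write $\zeta_k(s)=\prod_{\chi}L(s,\chi)$, the product running over the $2^{n}$ characters of $\mathrm{Gal}(k/\QQ)$: the trivial character contributes $\zeta_\QQ(s)$, while each of the $s=2^{n}-1$ nontrivial (quadratic) characters $\chi_i$ is cut out by a quadratic subfield $k_i$ and satisfies $L(s,\chi_i)=\zeta_{k_i}(s)/\zeta_\QQ(s)$. This gives the identity $\zeta_k(s)=\zeta_\QQ(s)^{2-2^{n}}\prod_{i=1}^{s}\zeta_{k_i}(s)$. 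Comparing the leading terms at $s=1$ (for $n\geq 2$ both sides have a simple pole, since $\zeta_\QQ^{2-2^{n}}$ has a zero of order $2^{n}-2$ and $\prod_i\zeta_{k_i}$ a pole of order $2^{n}-1$) and using $\mathrm{Res}_{s=1}\zeta_\QQ=1$, I obtain $\rho_k=\prod_{i=1}^{s}\rho_{k_i}$, where $\rho_F=\lim_{s\to1}(s-1)\zeta_F(s)$.

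Next I would substitute the analytic class number formula $\rho_F=\dfrac{2^{r_1}(2\pi)^{r_2}h_F R_F}{w_F\sqrt{|d_F|}}$ for $F=k$ and for each $F=k_i$. The conductor–discriminant formula gives $\sqrt{|d_k|}=\prod_{i=1}^{s}\sqrt{|d_{k_i}|}$, so the discriminants cancel and solving for $h(k)$ leaves
$$\frac{h(k)}{\prod_{i}h(k_i)}=\frac{w_k}{R_k}\cdot\frac{\prod_i R_{k_i}}{2^{r_1(k)}(2\pi)^{r_2(k)}}\cdot\prod_i\frac{2^{r_1(k_i)}(2\pi)^{r_2(k_i)}}{w_{k_i}}.$$
A short accounting of the archimedean factors then distinguishes the two cases. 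When $k$ is totally real ($r_1(k)=2^{n}$, all $k_i$ real quadratic) the factor $\frac{w_k}{2^{r_1(k)}}\prod_i\frac{2^{r_1(k_i)}}{w_{k_i}}$ collapses to $1$, so the entire contribution of $2^{v}$ and $q(k)$ must come from the regulators. When $k$ is totally imaginary ($r_2(k)=2^{n-1}$, with $2^{n-1}-1$ real and $2^{n-1}$ imaginary quadratic subfields) the $(2\pi)^{r_2}$ terms cancel but the roots-of-unity factors $w_k,w_{k_i}$ survive; these do not remain separate but get absorbed into the torsion part of the index $q(k)=[E_k:\prod_iE_{k_i}]$, which is how the possible factor of $3$ coming from $\QQ(\sqrt{-3})$ disappears and a clean power of $2$ remains.

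The crux, and the step I expect to be the main obstacle, is the regulator comparison: expressing $\prod_iR_{k_i}/R_k$ through $q(k)$ and an explicit power of $2$. Here I would use the logarithmic embedding of $E_k$ modulo torsion into the trace-zero hyperplane of $\RR^{r_1(k)+r_2(k)}$ and compare the covolume of the full unit lattice with that of the sublattice spanned by the units of the $k_i$. The unit ranks match ($2^{n}-1$ in the real case; $2^{n-1}-1$ in the imaginary case, with the imaginary quadratic subfields contributing nothing), so the comparison is through a finite index equal to $q(k)$ up to the torsion correction above. The delicate point is that the regulator of a subfield unit, viewed as a unit of $k$, differs from its regulator in $k_i$ by a factor recording how the archimedean places of $k_i$ split in $k$ (each real place of $k_i$ acquiring $2^{n-1}$ extensions in the real case), and these splitting factors must be tracked against the remaining $2$-powers. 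Carrying out this bookkeeping is precisely what produces the stated exponents $v=n(2^{n-1}-1)$ in the real case and $v=(n-1)(2^{n-2}-1)+2^{n-1}-1$ in the imaginary case. As a sanity check I would specialize to $n=2$, recovering Kuroda's biquadratic formula $h(k)=\tfrac14 q(k)h(k_1)h(k_2)h(k_3)$ (real) and $h(k)=\tfrac12 q(k)h(k_1)h(k_2)h(k_3)$ (imaginary). Should the direct lattice computation become unwieldy, I would fall back on proving the $n=2$ case by Kuroda's ambiguous-class-number argument for a single Klein-four layer and then inducting on $n$, peeling off one $V_4$-extension at a time and summing the contributions to $v$.
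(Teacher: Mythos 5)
The paper does not prove this lemma at all: it is quoted verbatim from Kuroda's 1950 paper (reference \cite{Ku-50}) and used as a black box, so there is no internal proof to compare against. Judged on its own terms, your outline is the standard (indeed essentially Kuroda's original) analytic route: factor $\zeta_k$ into Dirichlet $L$-functions over the $2^n$ characters of $\mathrm{Gal}(k/\QQ)$, obtain $\zeta_k(s)=\zeta_\QQ(s)^{2-2^n}\prod_i\zeta_{k_i}(s)$, compare residues at $s=1$ using the analytic class number formula and the conductor--discriminant relation, and reduce everything to a comparison of $R_k$ with $\prod_i R_{k_i}$ via the index $q(k)$. All the checkable details you state are right: the pole-order count, the cancellation of the archimedean factors in the totally real case, the count of $2^{n-1}-1$ real and $2^{n-1}$ imaginary quadratic subfields, the matching of unit ranks, and the $n=2$ specializations.

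That said, as written this is a plan rather than a proof: the entire content of the lemma --- the exact exponent $v$ --- is concentrated in the step you yourself flag as ``the crux'' and then describe only as ``bookkeeping.'' Two points there require genuine care. First, the determinant identity expressing the covolume of the lattice generated by the images of the $\varepsilon_{k_i}$ in the logarithmic embedding of $k$ as an explicit power of $2$ times $\prod_i R_{k_i}$ is where the values $n(2^{n-1}-1)$ and $(n-1)(2^{n-2}-1)+2^{n-1}-1$ actually come from; it is not a routine cancellation, and the real and imaginary cases genuinely differ (in the imaginary case the regulator is normalized with factors of $2$ at complex places, which is why the exponent changes shape). Second, your treatment of torsion in the imaginary case --- asserting that the $w_k$ and $w_{k_i}$ ``get absorbed'' into $q(k)$ so that the possible factor of $3$ from $\QQ(\sqrt{-3})$ disappears --- is exactly the delicate point in Kuroda's and Lemmermeyer's treatments and needs an argument, not an assertion: one must check that $[\mu_k:\prod_i\mu_{k_i}]$ accounts for precisely the ratio of the $w$-factors. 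Your fallback (prove $n=2$ by an ambiguous-class-number argument and induct on $V_4$-layers) is a legitimate alternative, but the induction step for $q(k)$ and $v$ is itself nontrivial. In short: right approach, correct skeleton, but the decisive computation is deferred rather than performed.
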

	\bigskip
	To compute that unit index  $q(k)$ appearing in the above lemma, it is useful  to
	recall the following  method given in    \cite{wada}, that describes a fundamental system  of units of a real  multiquadratic field $k_0$. Let  $\sigma_1$ and 
	$\sigma_2$ be two distinct elements of order $2$ of the Galois group of $k_0/\mathbb{Q}$. Let $k_1$, $k_2$ and $k_3$ be the three subextensions of $k_0$ invariant by  $\sigma_1$,
	$\sigma_2$ and $\sigma_3= \sigma_1\sigma_2$, respectively. Let $\varepsilon$ denote a unit of $k_0$. Then \label{algo wada}
	$$\varepsilon^2=\varepsilon\varepsilon^{\sigma_1}  \varepsilon\varepsilon^{\sigma_2}(\varepsilon^{\sigma_1}\varepsilon^{\sigma_2})^{-1},$$
	and we have, $\varepsilon\varepsilon^{\sigma_1}\in E_{k_1}$, $\varepsilon\varepsilon^{\sigma_2}\in E_{k_2}$  and $\varepsilon^{\sigma_1}\varepsilon^{\sigma_2}\in E_{k_3}$.
	It follows that the unit group of $k_0$  
	is generated by the elements of  $E_{k_1}$, $E_{k_2}$ and $E_{k_3}$, and the square roots of elements of   $E_{k_1}E_{k_2}E_{k_3}$ which are perfect squares in $k_0$.
	\bigskip

	The following is results a particular case of the well known Fukuda's Theorem  \cite{fukuda}.

	\begin{lemma}[\cite{fukuda}]\label{lm fukuda}
		Let $k_\infty/k$ be a $\mathbb{Z}_2$-extension and $n_0$  an integer such that any prime of $k_\infty$ which is ramified in $k_\infty/k$ is totally ramified in $k_\infty/k_{n_0}$.
		\begin{enumerate}[\rm $1)$]
			\item If there exists an integer $n\geq n_0$ such that   $h_2(k_n)=h_2(k_{n+1})$, then $h_2(k_n)=h_2(k_{m})$ for all $m\geq n$.
			\item If there exists an integer $n\geq n_0$ such that $\rg( A(k_n))= \rg(A(k_{n+1}))$, then
			$\rg(A(k_{m}))= \rg(A(k_{n}))$ for all $m\geq n$.
		\end{enumerate}
	\end{lemma}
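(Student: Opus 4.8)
The plan is to read off both statements from the structure of the Iwasawa module $X:=\varprojlim_n A(k_n)$, the inverse limit being taken with respect to the norm maps, regarded as a finitely generated module over $\Lambda=\ZZ_2[[T]]$ with $T=\gamma-1$ for a topological generator $\gamma$ of $\mathrm{Gal}(k_\infty/k)$. The sole purpose of the hypothesis on $n_0$ is this: replacing the base field $k$ by $k_{n_0}$ changes neither $X$ nor the tails of the two sequences, and over the new base \emph{every} ramified prime is totally ramified. This is exactly the hypothesis under which Iwasawa's control theorem applies, so after this harmless reindexing I may assume total ramification from the base and write $\omega_j=(1+T)^{2^j}-1$, $\nu_j=\omega_j/\omega_0$, and $\nu_{j+1,j}=\omega_{j+1}/\omega_j=(1+T)^{2^j}+1$.

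First I would record the finite-level consequence of total ramification. For $n\ge n_0$ the extension $k_{n+1}/k_n$ is totally ramified at some prime, while the $2$-Hilbert class field $H_{k_n}/k_n$ is unramified; hence $H_{k_n}\cap k_{n+1}=k_n$, and class field theory identifies $\mathrm{coker}\big(N_{k_{n+1}/k_n}\colon A(k_{n+1})\to A(k_n)\big)$ with $\mathrm{Gal}\big((H_{k_n}\cap k_{n+1})/k_n\big)=1$. Thus the norm maps are surjective for $n\ge n_0$, so both $n\mapsto\#A(k_n)$ and $n\mapsto\rg(A(k_n))$ are non-decreasing there (a surjection of finite abelian $2$-groups $G\twoheadrightarrow H$ induces a surjection $G/2G\twoheadrightarrow H/2H$, so it lowers neither the order nor the $2$-rank). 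Each statement is therefore reduced to showing that, once equality holds at one step, the surjection $N_{k_{n+1}/k_n}$ is an isomorphism on the relevant invariant at every later step.

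The engine is Iwasawa's control theorem: under total ramification there is a finitely generated $\Lambda$-submodule $Y_0\subseteq X$ (with $TX\subseteq Y_0$) and isomorphisms $A(k_{n_0+j})\cong X/\nu_j Y_0$ for all $j\ge 0$, compatible with the norm maps on the left and the natural projections $X/\nu_{j+1}Y_0\twoheadrightarrow X/\nu_j Y_0$ on the right. Suppose $h_2(k_n)=h_2(k_{n+1})$ with $n=n_0+j\ge n_0$. Then this projection is an isomorphism, i.e. $\nu_j Y_0=\nu_{j+1}Y_0=\nu_{j+1,j}\,(\nu_j Y_0)$. Since $\nu_{j+1,j}=(1+T)^{2^j}+1$ has constant term $[k_{n+1}:k_n]=2$ and so lies in the maximal ideal $\mathfrak m=(2,T)$ of $\Lambda$, the finitely generated module $M:=\nu_j Y_0$ satisfies $M=\nu_{j+1,j}M\subseteq\mathfrak m M\subseteq M$, whence $\mathfrak m M=M$ and $M=0$ by Nakayama's lemma. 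As $\nu_{j'}Y_0=\nu_{j',j}(\nu_j Y_0)=0$ for every $j'\ge j$, we get $A(k_m)\cong X$ for all $m\ge n$, and in particular $h_2(k_m)=h_2(k_n)$ for all $m\ge n$, which is statement $1)$.

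For statement $2)$ I would run the same argument after reducing modulo $2$: set $\bar X=X/2X$, a finitely generated module over $\bar\Lambda=\mathbb{F}_2[[T]]$, and let $\bar Y_0$ be the image of $Y_0$, so that $\rg(A(k_{n_0+j}))=\dim_{\mathbb{F}_2}\bar X/\nu_j\bar Y_0$, again compatibly with the projections. If $\rg(A(k_n))=\rg(A(k_{n+1}))$ then $\nu_j\bar Y_0=\nu_{j+1}\bar Y_0=\nu_{j+1,j}(\nu_j\bar Y_0)$, and since $\nu_{j+1,j}\equiv T^{2^j}\pmod 2$ lies in the maximal ideal $(T)$ of $\bar\Lambda$, Nakayama over $\bar\Lambda$ forces $\nu_j\bar Y_0=0$; hence $\rg(A(k_m))=\dim_{\mathbb{F}_2}\bar X=\rg(A(k_n))$ for all $m\ge n$. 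The one substantial ingredient — and the expected main obstacle — is the control theorem itself: producing the isomorphisms $A(k_{n_0+j})\cong X/\nu_j Y_0$ and their compatibility with the norm maps, which is precisely where total ramification above $k_{n_0}$ is indispensable and which must also survive the reduction modulo $2$ used in statement $2)$. Once it is in hand, the propagation is just the short Nakayama argument above.
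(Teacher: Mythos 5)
The paper offers no proof of this lemma: it is quoted verbatim as a known result of Fukuda (\cite{fukuda}), so there is nothing internal to compare against. Your argument is correct and is essentially the standard proof of Fukuda's theorem: total ramification above $k_{n_0}$ gives surjectivity of the norms and the control-theorem isomorphisms $A(k_{n_0+j})\cong X/\nu_jY_0$ compatible with the projections, and then equality at one step forces $\nu_jY_0=\nu_{j+1,j}\nu_jY_0$ with $\nu_{j+1,j}$ in the maximal ideal, so Nakayama kills $\nu_jY_0$ (respectively its image in $X/2X$ for the rank statement). Fukuda's original write-up phrases this by counting generators of the $Y_n$ rather than by explicitly invoking the isomorphism $A(k_n)\cong X/\nu_nY_0$, but the substance — the relation $Y_{n+1}=\nu_{n+1,n}Y_n$ plus Nakayama over the local rings $\Lambda$ and $\Lambda/2\Lambda$ — is identical to yours, and you correctly identify the control theorem as the one ingredient that must be imported rather than proved.
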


	\begin{lemma}[\cite{Qinred}, Lemma 2.4]\label{AmbiguousClassNumberFormula} Let $k/k'$  be a QO-extension of number fields. Then the rank of the $2$-class group of $k$ is given by
		$$\rg({A(k)})=t_{k/k'}-1-e_{k/k'},$$
		where  $t_{k/k'}$ is the number of  ramified primes (finite or infinite) in the extension  $k/k'$ and $e_{k/k'}$ is  defined by   $2^{e_{k/k'}}=[E_{k'}:E_{k'} \cap N_{k/k'}(k^*)]$.
	\end{lemma}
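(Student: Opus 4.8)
The plan is to recognize the statement as the specialization of Chevalley's ambiguous class number formula to a quadratic extension whose base has odd class number, and then to convert the resulting count of ambiguous classes into a $2$-rank. Write $G=\mathrm{Gal}(k/k')=\{1,\sigma\}$. Chevalley's formula for the cyclic extension $k/k'$ gives the order of the group of ambiguous ideal classes as
$$
\#\big(\mathrm{Cl}(k)\big)^{G}=h(k')\cdot\frac{\prod_{v}e_{v}}{[k:k']\,[E_{k'}:E_{k'}\cap N_{k/k'}(k^{*})]},
$$
where $v$ runs over all places of $k'$ and $e_{v}$ is the ramification index of $v$ in $k/k'$. Since $[k:k']=2$, each ramified place contributes $e_v=2$ while the unramified ones contribute $1$, so $\prod_v e_v=2^{\,t_{k/k'}}$, and by definition $[E_{k'}:E_{k'}\cap N_{k/k'}(k^{*})]=2^{\,e_{k/k'}}$.

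First I would pass from the full class group to its $2$-part, and more importantly from $\#(A(k))^{G}$ to the $2$-rank $\rg(A(k))$. Here the QO-hypothesis is decisive: since $h(k')$ is odd, the $2$-class group $A(k')$ is trivial, hence the norm $N_{k/k'}\colon A(k)\to A(k')$ is the zero map. For any class $c\in A(k)$ one has $c^{1+\sigma}=j\big(N_{k/k'}(c)\big)$, where $j$ is the extension-of-ideals map $\mathrm{Cl}(k')\to\mathrm{Cl}(k)$; as $A(k')=1$ this forces $c^{1+\sigma}=1$, i.e.\ $\sigma$ acts on $A(k)$ by inversion. Consequently $A(k)^{G}=\{c:c^{\sigma}=c\}=\{c:c^{2}=1\}=A(k)[2]$, which is an elementary abelian $2$-group, and therefore
$$
\rg(A(k))=\dim_{\mathbb{F}_2}A(k)[2]=\dim_{\mathbb{F}_2}A(k)^{G}=\log_{2}\#\big(A(k)\big)^{G}.
$$

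It then remains to extract the $2$-part of Chevalley's formula. Because $h(k')$ is odd, the $2$-part of $\#(\mathrm{Cl}(k))^{G}$ equals $\#(A(k))^{G}$ and the odd factor $h(k')$ disappears upon taking the $2$-part; substituting $\prod_v e_v=2^{\,t_{k/k'}}$, $[k:k']=2$, and the unit index $2^{\,e_{k/k'}}$ yields
$$
\#\big(A(k)\big)^{G}=\frac{2^{\,t_{k/k'}}}{2\cdot 2^{\,e_{k/k'}}}=2^{\,t_{k/k'}-1-e_{k/k'}}.
$$
Taking $\log_2$ and combining with the previous paragraph gives $\rg(A(k))=t_{k/k'}-1-e_{k/k'}$, as claimed.

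I expect the only genuinely delicate point to be the identity $\rg(A(k))=\log_{2}\#(A(k))^{G}$: in general the ambiguous class number controls only the fixed part $A(k)^{G}$, not the full rank. The reduction works precisely because the odd-class-number hypothesis on $k'$ kills $A(k')$ and thereby forces $\sigma$ to act as $-1$; without this one would have to compare $A(k)^{G}$ with $A(k)/A(k)^{1-\sigma}$ and keep track of the cokernel of $j$, and the clean formula above would fail. The remaining ingredients—Chevalley's formula itself and the elementary two-part bookkeeping—are standard.
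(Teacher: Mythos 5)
Your proof is correct. The paper does not prove this lemma at all---it is quoted verbatim from the cited reference (Yue, Lemma~2.4)---and your argument is the standard derivation that underlies that reference: Chevalley's ambiguous class number formula for the quadratic extension $k/k'$, combined with the observation that the odd class number of $k'$ forces $\sigma$ to act by inversion on $A(k)$, so that $A(k)^{G}=A(k)[2]$ and the ambiguous class number computes exactly the $2$-rank. You correctly identify this last reduction as the point where the QO-hypothesis is essential; without it the formula would only control $\#A(k)^{G}$ and not the rank.
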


	\bigskip
	
	Let  $k^{(1)}$  be the Hilbert $2$-class field of a number field $k$, i.e.  the maximal unramified  abelian   extension
	of $k$ whose degree over $k$ is a power of $2$. Put $k^{(0)} = k$ and let $k^{(i)}$ denote the Hilbert $2$-class field of $k^{(i-1)}$
	for any integer $i\geq 1$. Then the sequence of fields
	$$k=k^{(0)} \subset k^{(1)} \subset  k^{(2)}  \subset \cdots\subset k^{(i)} \cdots \subset  \bigcup_{i\geq 0} k^{(i)}=\mathcal{L}(k)$$
	is called   the $2$-class field tower of $k$. If for all $i\geq1$,  $k^{(i)}\neq k^{(i-1)}$, the tower is said to be infinite, otherwise the tower is said to be  finite, and the minimal integer $i$ satisfying the condition $k^{(i)}= k^{(i-1)}$ is called the length of the tower.  The following nice characterization is due to
	Benjamin,	  Lemmermeyer and	 Snyder.

	\begin{proposition}[\cite{BLS98}, Proposition 7]\label{LemBenjShn}
		Let $k$ be a number field such that $A(k)\simeq (2^m, 2^n)$ for some positive integers $m$ and $n$. If there is an unramified
		quadratic extension of $k$  with $2$-class number $2^{m+n-1}$,  then all three unramified quadratic extensions of $k$ have $2$-class number
		$2^{m+n-1}$,  and the $2$-class field tower of $k$ terminates at $k^{(1)}$.
		
		Conversely, if the $2$-class field tower of $k$ terminates at $k^{(1)}$, then all three
		unramified quadratic extensions of $k$ have $2$-class number $2^{m+n-1}$.
	\end{proposition}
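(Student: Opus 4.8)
The plan is to translate everything into the language of the Galois group $G=\mathrm{Gal}(k^{(2)}/k)$ of the second step of the $2$-class field tower, and then to isolate a purely group-theoretic statement. Since $A(k)\simeq(2^m,2^n)$ with $m,n\geq 1$ has $2$-rank exactly $2$, class field theory provides precisely three unramified quadratic extensions $K_1,K_2,K_3$ of $k$, corresponding to the three subgroups of index $2$ in $A(k)\simeq\mathrm{Gal}(k^{(1)}/k)$. Writing $G=\mathrm{Gal}(k^{(2)}/k)$, one has $G/G'\simeq A(k)$ and, because $k^{(1)}$ is the maximal unramified abelian $2$-extension of $k$, the maximal abelian subextension of $k^{(2)}/k$ is exactly $k^{(1)}$; hence $G'=\mathrm{Gal}(k^{(2)}/k^{(1)})\simeq A(k^{(1)})$. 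In particular the $2$-class field tower of $k$ terminates at $k^{(1)}$ if and only if $G$ is abelian, and $G$ is generated by two elements.

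First I would record the key computation at each $K_i$. Since $K_i/k$ is unramified, so is $k^{(2)}/K_i$, and as $K_i^{(1)}/K_i$ is abelian and unramified one checks the chain $k^{(1)}\subseteq K_i^{(1)}\subseteq k^{(2)}$. Setting $H_i=\mathrm{Gal}(k^{(2)}/K_i)$, a maximal subgroup of $G$, the same maximality argument (every abelian subextension of the unramified extension $k^{(2)}/K_i$ lies in $K_i^{(1)}$) shows that $K_i^{(1)}=(k^{(2)})^{H_i'}$, whence $A(K_i)\simeq H_i/H_i'$ and, using $H_i'\subseteq G'$ together with $[G:H_i]=2$, we get $h_2(K_i)=[H_i:H_i']=2^{m+n-1}\,[G':H_i']$. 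Thus $h_2(K_i)\geq 2^{m+n-1}$ for every $i$, with equality precisely when $H_i'=G'$.

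These two reductions make the converse immediate: if the tower terminates at $k^{(1)}$ then $G$ is abelian, so $H_i'=1=G'$ and $h_2(K_i)=2^{m+n-1}$ for all three indices $i$. For the forward implication it remains to prove the following group-theoretic lemma, which I expect to be the heart of the matter: if $G$ is a finite $2$-group generated by two elements and some maximal subgroup $H$ satisfies $H'=G'$, then $G$ is abelian. Granting it, the hypothesis $h_2(K_{i_0})=2^{m+n-1}$ gives $H_{i_0}'=G'$, hence $G$ is abelian, hence $G'=1$; this simultaneously forces the tower to stop at $k^{(1)}$ and yields $H_i'=1=G'$, that is $h_2(K_i)=2^{m+n-1}$, for all three extensions.

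The hard part is therefore this lemma, which I would prove by induction on $|G|$. If $G$ is nonabelian, choose a central subgroup $N\subseteq G'$ of order $2$ (available because the last nontrivial term of the lower central series is central and is contained in $G'$). Passing to $\bar G=G/N$ preserves both the property of being $2$-generated (as $N\subseteq\Phi(G)$) and the relation $\bar H'=\bar G'$, so by induction $\bar G$ is abelian; this forces $G'=N$ to be central of order $2$, so $G$ has nilpotency class $2$. But then $[g^2,h]=[g,h]^2=1$ for all $g,h$ shows $G^2\subseteq Z(G)$, whence $\Phi(G)=G'G^2\subseteq Z(G)$ and every maximal subgroup $\langle x,\Phi(G)\rangle$ is abelian, contradicting $H'=G'\neq 1$. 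Hence no nonabelian $G$ can occur, proving the lemma. The only other delicate point, beyond this lemma, is the identification $K_i^{(1)}=(k^{(2)})^{H_i'}$, which rests on the maximality of Hilbert class fields and on $k^{(2)}/k$ being everywhere unramified.
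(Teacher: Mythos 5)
The paper does not actually prove this proposition: it is quoted verbatim from Benjamin--Lemmermeyer--Snyder \cite{BLS98}, so there is no internal proof to compare against. Judged on its own, your argument is correct and is in substance the standard one. The reductions are all sound: since $k^{(2)}/k$ is unramified and $K_i^{(1)}/k^{(1)}$ is unramified abelian, one indeed gets $k^{(1)}\subseteq K_i^{(1)}\subseteq k^{(2)}$ and hence $K_i^{(1)}=(k^{(2)})^{H_i'}$, which yields $h_2(K_i)=[H_i:H_i']=2^{m+n-1}\,[G':H_i']$ and the equivalence $h_2(K_i)=2^{m+n-1}\Leftrightarrow H_i'=G'$; the converse direction and the reduction of the forward direction to your group-theoretic lemma then follow exactly as you say. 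The lemma itself (a two-generated finite $2$-group with a maximal subgroup $H$ satisfying $H'=G'$ is abelian) is true, and your induction is complete: a central $N\le G'$ of order $2$ exists for nonabelian $G$; since $N\le G'\le\Phi(G)\le H$, the quotient $\bar G$ stays two-generated with $\bar H$ maximal and $\bar H'=\bar G'$, so the inductive hypothesis forces $G'=N$ central of order $2$; then $[g^2,h]=[g,h]^2=1$ gives $\Phi(G)=G^2G'\le Z(G)$, so every maximal subgroup $\langle x,\Phi(G)\rangle$ is abelian, contradicting $H'=G'\neq 1$. This matches the approach of \cite{BLS98}, which likewise passes to $G=\mathrm{Gal}(k^{(2)}/k)$ and reduces to the statement that $H'=G'$ for a maximal subgroup forces $G'=1$; your write-up simply makes the group theory self-contained.
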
	
	
	Moreover,
	   let $k$ be a number field such that $A(k)\simeq\ZZ/2 \ZZ\times\ZZ/2^n \ZZ$, where $n\geq2$ is a natural number.
	Put $G_k=\mathrm{Gal}(\mathcal{L}(k)/k)$. So by class field theory, we have 
	$G_k/G_k'\simeq\ZZ/2 \ZZ\times\ZZ/2^n \ZZ$ and  $G_k=\langle a,
	b\rangle$ such that $a^2\equiv b^{2^n}\equiv 1\mod G_k'$ and $A(k)=\langle \mathfrak{c}, \mathfrak{d}\rangle\simeq \langle aG_k',
	bG'\rangle$ where $(\mathfrak{c}, k^{(2)}/k)=aG_k'$
	and $(\mathfrak{d}, k^{(2)}/k)=bG_k'$ with $(\ .\ ,
	k^{(2)}/k)$ is the Artin symbol in the extension
	$k^{(2)}/k$. Therefore, there exist three normal subgroups of $G_k$ of index $2$ denoted  $H_{1, 2}$, $H_{2, 2}$ and
	$H_{3, 2}$ such that
	\begin{center}
		$H_{1, 2}=\langle b, G_k'\rangle$, $H_{2, 2}=\langle ab, G_k'\rangle$ and
		$H_{3, 2}=\langle a, b^2, G_k'\rangle.$
	\end{center}
	Furthermore, there exist three normal subgroups of $G_k$ of index
	$4$  denoted  $H_{1, 4}$,
	$H_{2, 4}$ and $H_{3, 4}$ such that
	$$H_{1, 4}=\langle a,b^4, G_k'\rangle, \quad H_{2, 4}=\langle ab^2, G_k'\rangle \text{ and } H_{3, 4}=\langle b^2, G_k'\rangle.$$
	Each subgroup $H_{i,j}$ of $A(k)$
	corresponds to 	an unramified extension    ${K}_{i,j}$ of $k$ contained in
	$k^{(1)}$ such that $A(k)/H_{i,j}\simeq
	\operatorname{Gal}(K_{i,j}/k)$ and
	$H_{i,j}=\mathcal{N}_{{K}_{i,j}/k}(A(K_{i,j}) )$. The Hilbert $2$-class field tower of $k$ can be schematized as follows:
	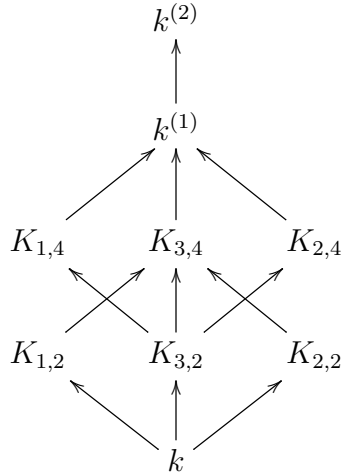
\begin{figure}[H]
		$$
		\xymatrix{
			& k^{(2)} \ar@{<-}[d] & \\
			& k^{(1)}\ar@{<-}[ld]\ar@{<-}[d]\ar@{<-}[rd]\\
			{K_{1,  4}}\ar@{<-}[rd]&\ar@{<-}[ld] {K_{3,  4}}\ar@{<-}[d]\ar@{<-}[rd]  & {K_{2,  4}}\ar@{<-}[ld]\\
			{K_{1,  2}}\ar@{<-}[rd]& {K_{3,  2}} \ar@{<-}[d]  & {K_{2,  2}}\ar@{<-}[ld]\\
			&k
		}
		$$
		\caption{\label{Fig2} Unramified extensions}
	\end{figure}
	
	   The following interesting result	is  due to
	Aaboun and   Zekhnini.

	\begin{theorem}[\cite{aaboune}, Theorem 4.10]\label{AabounePrzekhini}
		The following assertions are equivalent:
		\begin{enumerate}[\rm $1)$]
			\item $G_k$ is abelian,
			\item The  $2$-class group of ${K_{3,  2}}$ is isomorphic to  $\ZZ/2 \ZZ\times\ZZ/2^{n-1} \ZZ$,
			\item The  $2$-class number of ${K_{3,  2}}$ equals $2^n$,
			\item The $2$-class group of  ${K_{i, 2}}$ is cyclic of order $2^n$ with $i=1$ or $2$,
			\item The  $2$-class number of ${K_{i,  2}}$ equals $2^n$ with $i=1$ or $2$,
			\item The Hilbert $2$-class field tower of $k$ stops at $k^{(1)}$.
		\end{enumerate}
	\end{theorem}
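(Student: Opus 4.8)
The plan is to route the seven implications through exactly two different inputs: a purely group-theoretic computation inside $G_k$ for the directions that start from the abelianness of $G_k$, and Proposition \ref{LemBenjShn} for the directions that must \emph{recover} abelianness from a class-number equality. I would first dispose of the equivalence (1)$\Leftrightarrow$(6). Since $\mathrm{Gal}(\mathcal{L}(k)/k^{(1)})=G_k'$, the $2$-class field tower stops at $k^{(1)}$ precisely when $\mathcal{L}(k)=k^{(1)}$, that is, when $G_k'=1$, which is exactly the abelianness of $G_k$. This costs nothing beyond the definitions recalled before the statement.

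For the forward implications I would assume $G_k$ abelian, so that $G_k=A(k)=\langle a\rangle\times\langle b\rangle\simeq\ZZ/2\ZZ\times\ZZ/2^n\ZZ$ with $G_k'=1$. The key preliminary is the identification $A(K_{i,2})\simeq H_{i,2}/H_{i,2}'$ for each unramified quadratic extension $K_{i,2}/k$. This rests on the stability $\mathcal{L}(K_{i,2})=\mathcal{L}(k)$: because $K_{i,2}/k$ is quadratic, hence Galois, the field $\mathcal{L}(K_{i,2})$ is stable under $\mathrm{Gal}(\overline{\QQ}/k)$, so $\mathcal{L}(K_{i,2})/k$ is Galois with pro-$2$ Galois group (an extension of $\ZZ/2\ZZ$ by a pro-$2$ group) and is unramified over $k$; thus $\mathcal{L}(K_{i,2})\subseteq\mathcal{L}(k)$, and the reverse inclusion is clear since $\mathcal{L}(k)/K_{i,2}$ is unramified pro-$2$. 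Granting this, I would simply read off the isomorphism types of the index-$2$ subgroups listed in the excerpt, using $G_k'=1$: one finds $H_{3,2}=\langle a,b^2\rangle\simeq\ZZ/2\ZZ\times\ZZ/2^{n-1}\ZZ$, $H_{1,2}=\langle b\rangle\simeq\ZZ/2^n\ZZ$, and $H_{2,2}=\langle ab\rangle\simeq\ZZ/2^n\ZZ$ (the order of $ab$ in the direct product is $\mathrm{lcm}(2,2^n)=2^n$, so $\langle ab\rangle$ already has order $|H_{2,2}|=2^n$). Since $H_{i,2}'=1$, this gives $A(K_{3,2})\simeq\ZZ/2\ZZ\times\ZZ/2^{n-1}\ZZ$ and $A(K_{i,2})\simeq\ZZ/2^n\ZZ$ for $i=1,2$, proving (1)$\Rightarrow$(2) and (1)$\Rightarrow$(4); then (2)$\Rightarrow$(3) and (4)$\Rightarrow$(5) are immediate on passing to cardinalities, as $|A(K_{i,2})|=2^n$ in all cases.

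For the reverse directions I would apply Proposition \ref{LemBenjShn} with $(m,n)=(1,n)$, so that $2^{m+n-1}=2^n$, after noting that $K_{1,2},K_{2,2},K_{3,2}$ are exactly the three unramified quadratic extensions of $k$ (the quotient $A(k)/A(k)^2\simeq(\ZZ/2\ZZ)^2$ has precisely three subgroups of index $2$). If (3) holds, then $K_{3,2}$ is an unramified quadratic extension of $k$ with $h_2(K_{3,2})=2^n=2^{m+n-1}$, so Proposition \ref{LemBenjShn} forces the $2$-class field tower of $k$ to terminate at $k^{(1)}$, which is (6); the identical argument with $K_{1,2}$ or $K_{2,2}$ yields (5)$\Rightarrow$(6). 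Combined with the equivalence (1)$\Leftrightarrow$(6) already established, this closes the cycle $(1)\Rightarrow(2)\Rightarrow(3)\Rightarrow(6)\Rightarrow(1)$ and $(1)\Rightarrow(4)\Rightarrow(5)\Rightarrow(6)\Rightarrow(1)$, giving all six equivalences.

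I expect the two delicate points to be the following. The first is the stability $\mathcal{L}(K_{i,2})=\mathcal{L}(k)$: it is precisely what licenses translating the arithmetic object $A(K_{i,2})$ into the group-theoretic quotient $H_{i,2}/H_{i,2}'$, and it works here only because the relevant extensions $K_{i,2}/k$ are quadratic, hence automatically Galois. The second, and the genuine engine of the ``hard'' implications (3)$\Rightarrow$(1) and (5)$\Rightarrow$(1), is Proposition \ref{LemBenjShn}: the whole content of those directions is the fact that a single unramified quadratic extension attaining the extremal $2$-class number $2^{m+n-1}$ already forces the tower to stop, so the main obstacle is not internal to the argument but is absorbed into that cited result of Benjamin, Lemmermeyer and Snyder.
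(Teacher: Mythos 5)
Your argument is correct, but there is nothing in the paper to compare it against: Theorem \ref{AabounePrzekhini} is imported verbatim from \cite{aaboune} (Theorem 4.10) and the paper supplies no proof of its own, so your proposal is an independent reconstruction rather than an alternative to an in-text argument. The reconstruction is sound. The equivalence $(1)\Leftrightarrow(6)$ via $\mathrm{Gal}(\mathcal{L}(k)/k^{(1)})=\overline{G_k'}$ is the definition unwound; the forward implications correctly read off $H_{1,2}\simeq H_{2,2}\simeq\ZZ/2^n\ZZ$ and $H_{3,2}\simeq\ZZ/2\ZZ\times\ZZ/2^{n-1}\ZZ$ once $G_k'=1$ (your computation of the order of $ab$ as $\mathrm{lcm}(2,2^n)=2^n$ and the use of $n\geq 2$ for $H_{3,2}$ are both needed and both present); and the reverse implications $(3)\Rightarrow(6)$ and $(5)\Rightarrow(6)$ are exactly an application of Proposition \ref{LemBenjShn} with $m=1$, which is indeed where all the arithmetic content lives. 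The one point you rightly single out as delicate, the identification $A(K_{i,2})\simeq H_{i,2}/H_{i,2}'$ via $\mathcal{L}(K_{i,2})=\mathcal{L}(k)$, is handled adequately: both inclusions follow from maximality plus the fact that an unramified pro-$2$ extension of an unramified quadratic extension of $k$ is again unramified pro-$2$ and Galois over $k$. Note in passing that the paper's own preamble slightly abuses notation by calling $H_{i,j}$ both a subgroup of $G_k$ and a subgroup of $A(k)$; your reading (subgroups of $G_k$, with $A(K_{i,2})$ the abelianization of $H_{i,2}$) is the correct one and is the only reading under which assertion $(2)$ makes sense when $G_k$ is nonabelian.
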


	\subsection{The proof of   Theorem \ref{maintheorem}}
	Let us start with   some lemmas concerning the $2$-rank of the class group of real biquadratic fields.
	 	The following remark  is useful to simplify the statement of   Lemmas \ref{lemmaq1q2Bd=3mod4} and   \ref{lemma2qC}.
	\begin{remark}\label{remcondic1c2}
		Let $q_1\equiv3 \pmod 4$,   $q_2\equiv3 \pmod 8$, $r$ and $s$ be four prime numbers.   Let us defined  the following two conditions:  
		\begin{enumerate}[$\C1:$]
			\item    
			$\left(\frac{q_1q_2}{r}\right)=-\left(\frac{q_1q_2}{s}\right)=1$ and   we have one of the following conditions :\begin{enumerate}[$\bullet$]
				\item $q_1\equiv7 \pmod 8$, $r\equiv1 \pmod 4$ and $\left(\frac{q_1}{r}\right)=1$, or
				\item $q_1\equiv3 \pmod 8$ and $\left(\frac{q_1}{r}\right)=\left(\frac{-1}{r}\right)$.
			\end{enumerate}

			\item   $q_1\equiv7 \pmod 8$, $\left(\frac{q_1q_2}{r}\right)=\left(\frac{q_1q_2}{s}\right)=1$ and we have one of the following conditions :\begin{enumerate}[$\bullet$]
				\item $r\equiv s\equiv1 \pmod 4$, 
				
				\item $r\equiv1 \pmod 4$ and $\left(\frac{q_1}{r}\right)=1$,
				\item  $\left(\frac{q_1}{r}\right)=\left(\frac{q_1}{s}\right)=1$, or
				
				\item $ r \equiv s \equiv 3 \pmod 4$ and  $\left(\frac{q_1}{r}\right)= \left(\frac{q_1}{s}\right)=-1.$


			\end{enumerate}
			

			\item   
			$\left(\frac{q_1q_2}{r}\right)=-\left(\frac{q_1q_2}{s}\right)=1$ and we have one of the following conditions:\begin{enumerate}[$\bullet$]
				\item $q_1\equiv7 \pmod 8$, $r\equiv1 \pmod 4$ and $\left(\frac{q_1}{r}\right)=1$,
				\item $q_1\equiv3 \pmod 8$ and $\left(\frac{q_1}{r}\right)=1$.
			\end{enumerate}
		\end{enumerate}
	\end{remark}


	\begin{lemma}\label{lemmaq1q2Bd=3mod4}Let   $K=\QQ(\sqrt{q_1q_2},\sqrt{d})$, where $q_1\equiv3 \pmod 4$,  $q_2\equiv3 \pmod 8$ are two prime numbers  and  ${\color{blue} d \equiv 3\pmod 4}$ is a  positive  square-free integer that is not divisible by $q_1q_2$. Let   $\delta\in \{1,q_1,q_2\}$.  
		Then $\2r(A(K))\leq 2$ if and only if $K$ takes one of the following forms:
		\begin{enumerate}[$1)$]
			\item  $K=\QQ(\sqrt{q_1q_2},\sqrt{d})$, where   $d=\delta r\equiv 3\pmod 4 $ with $r$  is a prime  number. {\bf In this case,} we have $\2r(A(K))\in \{1,2\}$. More precisely, $\2r(A(K))=1$ if and only if {we have one of} the following conditions:
			\begin{enumerate}[$	 a) $]
				\item    $ \left(\frac{q_1q_2}{r}\right)=-1$, 
				\item     $ \left(\frac{q_1q_2}{r}\right)=1$ and  $\left(\frac{q_1}{ r}\right)\neq \left(\frac{-1}{ r}\right)$.
				
			\end{enumerate}

			\item    $K=\QQ(\sqrt{q_1q_2},\sqrt{d})$  with  $d=\delta rs\equiv 3\pmod 4 $, where $r$  and $s$ are prime  numbers which do not satisfy any of the conditions 
			$\C1$ and $\C2$ defined in   Remark \ref{remcondic1c2}. {\bf In this case}, we have  $ \2r(A(K))\in \{1,2\}$. More precisely,  $ \2r(A(K))=2 $ if and only we have one of the following conditions:
			\begin{enumerate}[$a)$]
				\item   $ \left(\frac{q_1q_2}{r}\right)=\left(\frac{q_1q_2}{s}\right)=-1$,  
				
				\item  $q_1\equiv 7\pmod 8$,   $ \left(\frac{q_1q_2}{r}\right)=-\left(\frac{q_1q_2}{s}\right)=-1$ and   $[\left(\frac{q_1}{s}\right)=-1$ or $\left(\frac{-1}{s}\right)=-1]$,
				
				\item    $q_1\equiv 7\pmod 8$,     $ \left(\frac{q_1q_2}{r}\right)=\left(\frac{q_1q_2}{s}\right)=1$ and  $[ \left(\frac{q_1 }{ s}\right)=-\left(\frac{q_1 }{ r}\right)=\left(\frac{-1 }{ r}\right)=-1$  or  $\left(\frac{q_1}{ s}\right)=\left(\frac{-1}{ r}\right)=-\left(\frac{-1}{ s}\right)=-1]$.

			\end{enumerate}		
		\end{enumerate}	
	\end{lemma}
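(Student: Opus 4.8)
The plan is to realise $K$ as a quadratic extension of the real quadratic field $k'=\QQ(\sqrt{q_1q_2})$ and to apply the ambiguous class number formula of Lemma~\ref{AmbiguousClassNumberFormula}. First I would verify that $k'$ is a legitimate base, i.e.\ that it has odd class number. Since $q_1\equiv q_2\equiv 3\pmod 4$ we have $q_1q_2\equiv 1\pmod4$, so $k'/\QQ$ ramifies exactly at $q_1,q_2$ and nowhere at infinity; computing the single relevant norm-residue symbol $(q_1q_2,-1)_{q_1}=\left(\frac{-1}{q_1}\right)=-1$ shows $-1\notin N_{k'/\QQ}(k'^{*})$, whence the formula over $\QQ$ gives $\rg(A(k'))=2-1-1=0$. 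Thus $A(k')$ is trivial and $K=k'(\sqrt d)$ is a QO-extension, so Lemma~\ref{AmbiguousClassNumberFormula} yields
$$\rg(A(K))=t_{K/k'}-1-e_{K/k'},$$
where $t_{K/k'}$ counts the finite primes of $k'$ ramifying in $K/k'$ and $2^{e_{K/k'}}=[E_{k'}:E_{k'}\cap N_{K/k'}(K^{*})]$.

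Next I would compute $t_{K/k'}$ by a local analysis, which simultaneously forces the shape of $d$. As $d>0$ and $K$ is totally real, no infinite place ramifies. A prime $\mathfrak p\nmid 2$ of $k'$ ramifies in $K/k'$ exactly when $v_{\mathfrak p}(d)$ is odd; since $q_\ell\mathcal{O}_{k'}=\mathfrak q_\ell^{2}$, the factor $\delta\in\{1,q_1,q_2\}$ has even valuation and contributes no ramified prime, so the odd ramification comes only from $r$ (and $s$): each such prime $\ell$ contributes two ramified primes if $\left(\frac{q_1q_2}{\ell}\right)=1$ and one if $\left(\frac{q_1q_2}{\ell}\right)=-1$. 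The prime $2$ splits or stays inert in $k'$ according as $q_1\equiv 3$ or $7\pmod 8$ (because $q_1q_2\equiv 1$ or $5\pmod8$), and whether the primes above $2$ ramify in $K/k'$ is decided by the square class of $d$ in the dyadic completions, i.e.\ by $\delta r\bmod 8$. Carrying this out shows $t_{K/k'}$ increases by at least one for each extra odd prime factor of $d$ coprime to $q_1q_2$; hence $\rg(A(K))\le 2$ can hold only when $d$ has at most two such factors, which is precisely the dichotomy $d=\delta r$ (form $1$) versus $d=\delta rs$ (form $2$), the configurations with three or more prime factors making $t_{K/k'}$ large enough to force $\rg(A(K))\ge 3$.

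The heart of the argument is the computation of $e_{K/k'}\in\{0,1,2\}$. Since a prime $\equiv 3\pmod4$ divides $q_1q_2$ we have $N_{k'/\QQ}(\varepsilon)=1$, so $E_{k'}/E_{k'}^{2}=\langle\overline{-1},\overline\varepsilon\rangle$ and $E_{k'}^{2}\subseteq N_{K/k'}(K^{*})$ (as $N_{K/k'}(u)=u^{2}$ for $u\in k'$); thus $e_{K/k'}$ is determined by deciding which of $-1$ and $\varepsilon$ is a global norm from $K$. By Hasse's norm theorem this reduces to evaluating the symbols $(-1,d)_{\mathfrak p}$ and $(\varepsilon,d)_{\mathfrak p}$ at the ramified primes, those at split and unramified places being trivial and the product formula removing one further condition. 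The symbols $(-1,d)_{\mathfrak p}$ are elementary and collapse to $\left(\frac{-1}{r}\right)$-type factors, whereas the symbols $(\varepsilon,d)_{\mathfrak p}$ are the delicate ones: they require the explicit behaviour of the fundamental unit $\varepsilon$ of $\QQ(\sqrt{q_1q_2})$, and, using $N(\varepsilon)=1$ together with Hilbert~$90$ (which writes $\varepsilon=\alpha^{1-\sigma}$), they collapse to the Legendre symbols $\left(\frac{q_1}{r}\right),\left(\frac{q_1}{s}\right)$ appearing in the statement. I expect this unit-norm computation to be the main obstacle, since it is where the congruences $q_1\equiv 7\pmod8$ and the conditions $\C1,\C2$ of Remark~\ref{remcondic1c2} enter and must be tracked case by case.

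Finally I would assemble the pieces by substituting the values of $t_{K/k'}$ and $e_{K/k'}$ into $\rg(A(K))=t_{K/k'}-1-e_{K/k'}$ and running through the finitely many configurations of $\left(\frac{q_1q_2}{r}\right),\left(\frac{q_1q_2}{s}\right),\left(\frac{q_1}{r}\right),\left(\frac{q_1}{s}\right),\left(\frac{-1}{r}\right)$ and of $r,s$ modulo $8$. For $d=\delta r$ this gives $\rg(A(K))\in\{1,2\}$ with the value $1$ characterised by conditions $a)$ and $b)$; for $d=\delta rs$ outside $\C1,\C2$ it gives $\rg(A(K))\in\{1,2\}$ with the value $2$ characterised by $a),b),c)$, as stated. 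The configurations producing $t_{K/k'}-1-e_{K/k'}\ge 3$ are then exactly those excluded from forms $1$ and $2$, which closes the ``only if'' direction.
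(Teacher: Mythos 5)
Your proposal follows essentially the same route as the paper: both realise $K$ as a QO-extension of $F=\QQ(\sqrt{q_1q_2})$, apply the ambiguous class number formula $\rg(A(K))=t_{K/F}-1-e_{K/F}$, bound the number of prime factors of $d$ via $t_{K/F}\leq 5$, and then determine $e_{K/F}$ by evaluating the norm residue symbols of $-1$ and of the fundamental unit $\varepsilon_{q_1q_2}$ at the ramified primes. The only difference is that the paper delegates those symbol evaluations to the cited results of Azizi and Mouhib rather than recomputing them via Hasse's norm theorem and Hilbert 90 as you sketch.
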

	\begin{proof}
		Assume that $K=\QQ(\sqrt{q_1q_2},\sqrt{d})$ where   $q_1\equiv3 \pmod 4$,  $q_2\equiv3 \pmod 8$ are two prime numbers  and {\color{blue}$d\equiv 3\pmod 4$} is any       positive   square-free integer that is not divisible by   $q_1q_2$.
		Let $t_{K/F}$ be the number of prime ideals of $ F=\QQ(\sqrt{q_1q_2})$ that are ramified in $K$. 
		Thus,  $ \2r(A(K))=t_{K/F}-1-e_{K/F}$. 
		As  $e_{K/F}\in\{0,1,2\}$, then the inequality $\2r(A(K))\leq 2$ implies that $t_{K/F}\leq 5$. Therefore, $d$ takes one of the following five forms
		\begin{eqnarray*}
			&\delta r, &\ \delta rs \text{ whenever }  q_1q_2\equiv 5\pmod 8 \text{ or } \left(\frac{q_1q_2}{s}\right)=-1 , \\
			& \delta  rst&  \text{ whenever }  q_1q_2\equiv 1\pmod 8 \text{ and }  \left(\frac{q_1q_2}{r}\right)= \left(\frac{q_1q_2}{s}\right)=\left(\frac{q_1q_2}{t}\right)=-1  ,\\
			& \delta  rst&  \text{ whenever }  q_1q_2\equiv 5\pmod 8 \text{ and }     \left(\frac{q_1q_2}{s}\right)=\left(\frac{q_1q_2}{t}\right)=-1,\\
			&\delta rstk& \text{ whenever }  q_1q_2\equiv 5\pmod 8 \text{ and }  \left(\frac{q_1q_2}{r}\right)=\left(\frac{q_1q_2}{s}\right)=\left(\frac{q_1q_2}{t}\right)=\left(\frac{q_1q_2}{k}\right)=-1,
		\end{eqnarray*}
		where $\delta\in \{1,q_1,q_2\}$ and   $r$, $s$, $t$     and $k$ are prime numbers all different of $q_1$ and $q_2$.   
		\begin{enumerate}[$\blacktriangleright$]
			\item 	  Let  $d=\delta r$.   
			We have:
			\begin{enumerate}[$\star$]
				\item If $q_1q_2\equiv 5\pmod 8$ and  $ \left(\frac{q_1q_2}{r}\right)=-1$, then $ \2r(A(K))=2-1-e_{K/F}$. Thus,     \cite[Theorems 3.3]{Azmouh2-rank} gives
				$ \2r(A(K))=1$.

				\item If $q_1q_2\equiv 5\pmod 8$ and  $ \left(\frac{q_1q_2}{r}\right)=1$, then \cite[Theorems 3.3]{Azmouh2-rank} implies that    $ \2r(A(K))\in \{1,2  \}$, more precisely,  $ \2r(A(K))=2$ if and only if $\left(\frac{q_1  }{ r}\right)=\left(\frac{-1 }{ r}\right)=1$.

				\item If $q_1q_2\equiv 1\pmod 8$ and  $ \left(\frac{q_1q_2}{r}\right)=-1$, then  $ \2r(A(K))=3-1-e_{K/F}$ and \cite[Theorems 3.4]{Azmouh2-rank} implies that    $ \2r(A(K))=1$. 

				\item If $q_1q_2\equiv 1\pmod 8$ and  $ \left(\frac{q_1q_2}{r}\right)=1$, then $ \2r(A(K))=4-1-e_{K/F}$ and  \cite[Theorems 3.4]{Azmouh2-rank} gives
				$ \2r(A(K))\in \{1,2  \}$, more precisely,  $ \2r(A(K))=1$ if and only if $\left(\frac{q_1 }{r}\right)\left(\frac{-1 }{r}\right)=-1$.
				

			\end{enumerate}

			\item  Let  $d=  \delta rs$ with   $q_1q_2\equiv 5\pmod 8 \text{ or } \left(\frac{q_1q_2}{s}\right)=-1$. 
			\begin{enumerate}[$\star$]
				\item Assume that  $q_1q_2\equiv 5\pmod 8 \text{ and  } \left(\frac{q_1q_2}{s}\right)=-1$.  
				If $\left(\frac{q_1q_2}{r}\right)=-1$, then $ \2r(A(K))=3-1-e_{K/F}$ and   \cite[Theorems 3.3]{Azmouh2-rank} gives $ \2r(A(K)) =2$.

				If $\left(\frac{q_1q_2}{r}\right)=1$, then   \cite[Theorems 3.3]{Azmouh2-rank} gives again
				$ \2r(A(K))=4-1-e_{K/F}=3-e_{K/F}\in \{3,2\}$, more precisely, $ \2r(A(K))=2$ if and only if $\left(\frac{q_1}{r}\right)=-1$ or $\left(\frac{-1}{r}\right)=-1$.

				\item Assume that $q_1q_2\equiv 5\pmod 8 \text{ and  } \left(\frac{q_1q_2}{s}\right)=1$. 
				If $\left(\frac{q_1q_2}{r}\right)=-1$, then  we have
				$ \2r(A(K)) \in \{3,2\}$ and $ \2r(A(K))=2$ if and only if $\left(\frac{q_1}{s}\right)=-1$ or $\left(\frac{-1}{s}\right)=-1$.
				If $\left(\frac{q_1q_2}{r}\right)=1$, then \cite[Theorems 3.3]{Azmouh2-rank}, we have
				$ \2r(A(K))=5-1-e_{K/F}=4-e_{K/F}\in \{2,3,4\}$ and $ \2r(A(K))=2$ if and only if $e_{K/F}=2$, which is equivalent to $[ \left(\frac{-1 }{ r}\right)=-1$ and  $\left(\frac{q_1 }{ s}\right)=-\left(\frac{q_1 }{ r}\right)=-1 ]$ or  $[\left(\frac{-1}{ r}\right)=-\left(\frac{-1}{s}\right)=-1$ and  $ \left(\frac{q_1}{s}\right)=-1 ]$.

				\item Assume that $q_1q_2\equiv 1\pmod 8 \text{ and  } \left(\frac{q_1q_2}{s}\right)=-1$. 
				
				\noindent $*)$  If $\left(\frac{q_1q_2}{r}\right)=-1$, then    $ \2r(A(K))=4-1-e_{K/F}=3-e_{K/F}$ and \cite[Theorems 3.4]{Azmouh2-rank} gives $ \2r(A(K))=2$.
				
				\noindent $*)$ Now if  $\left(\frac{q_1q_2}{r}\right)= 1$, then   we have
				$ \2r(A(K))=5-1-e_{K/F}=4-e_{K/F}\in \{2,3\} $ and \cite[Theorems 3.4]{Azmouh2-rank} gives $ \2r(A(K))=2$ if and only if  $ \left(\frac{-1}{ r}\right)\not=\left(\frac{q_1 }{ r}\right)$.
			\end{enumerate}
			
			\item In the remaining cases, we check similarly that   we have		$ \2r(A(K))   \geq 3$.
		\end{enumerate}	
	\end{proof}

Let $	\left(\frac{-,\, -}{ \cdot}\right)$ denote the norm residue symbol.
	The following lemma corrects  an error in \cite[Theorem  3.4]{Azmouh2-rank} concerning the fields $\QQ(\sqrt{q_1q_2},\sqrt{2d})	$, where $d\equiv3\pmod 4$ and $q_1q_2\equiv1\pmod8$.
	\begin{lemma}\label{corrlemma}
		Let $d\equiv3\pmod 4$ be a positive square-free integer and $K=\QQ(\sqrt{q_1q_2},\sqrt{2d})	$. Assume that $q_1\equiv q_2\equiv 3\pmod8$. Then 
		$e_{K/F}\in\{1,2\}$ and $e_{K/F}=2$ if and only if  $\left(\frac{q_1}{r}\right)=-1$, for some prime divisor $r$ of $d$ such that  $\left(\frac{q_1q_2}{r}\right)=1$.
		Here $F=\QQ(\sqrt{q_1q_2})	$.
	\end{lemma}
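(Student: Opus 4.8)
The plan is to compute the index $2^{e_{K/F}}=[E_F:E_F\cap N_{K/F}(K^*)]$ directly, by deciding which units of $F=\QQ(\sqrt{q_1q_2})$ are norms from $K=F(\sqrt{2d})$. Since $q_1\equiv q_2\equiv3\pmod8$ we have $q_1q_2\equiv1\pmod8$, so $2$ splits in $F$, say $2\OO_F=\mathfrak{p}_2\mathfrak{p}_2'$, and $K$ is totally real so no infinite place ramifies in $K/F$. Because $q_1,q_2\equiv3\pmod4$, the fundamental unit $\varepsilon$ of $F$ has norm $N_{F/\QQ}(\varepsilon)=1$, and $E_F=\langle-1,\varepsilon\rangle$. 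By the Hasse norm theorem a unit is a global norm from the cyclic extension $K/F$ iff it is a local norm at every place; as units are automatically local norms at the places unramified in $K/F$ and at the split infinite places, $e_{K/F}$ equals the $\mathbb{F}_2$-rank of the matrix of norm residue symbols $\left(\frac{u,\,2d}{\mathfrak{q}}\right)$ with $u\in\{-1,\varepsilon\}$ and $\mathfrak{q}$ running over the finite primes of $F$ ramified in $K/F$. These ramified primes are $\mathfrak{p}_2,\mathfrak{p}_2'$ together with, for each odd prime $r\mid d$, the unique prime above $r$ when $\left(\frac{q_1q_2}{r}\right)=-1$ and the two primes above $r$ when $\left(\frac{q_1q_2}{r}\right)=1$.

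First I would dispose of the easy contributions. Since $d\equiv3\pmod4$, a direct evaluation gives $\left(\frac{-1,\,2d}{\mathfrak{p}_2}\right)=(-1,2d)_2=-1$, so $-1$ is not a local norm at $\mathfrak{p}_2$; hence $e_{K/F}\geq1$, and since $\mathrm{rank}_{\mathbb{F}_2}(E_F/E_F^2)=2$ we obtain $e_{K/F}\in\{1,2\}$, which is the first assertion. At a prime above an inert $r$ (residue field $\mathbb{F}_{r^2}$) both $-1$ and the norm-one unit $\varepsilon$ are squares, so their symbols vanish; thus only the split odd primes and the two dyadic primes can contribute. Finally, the product formula $\prod_{\mathfrak q}\left(\frac{\varepsilon,\,2d}{\mathfrak q}\right)=1$, in which each split prime contributes the square of a single symbol, forces the two dyadic symbols of $\varepsilon$ to coincide.

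The heart of the matter is to evaluate the remaining symbols of $\varepsilon$, and here I would use the identity $(1+\varepsilon)^2=(\mathrm{Tr}(\varepsilon)+2)\,\varepsilon$, valid because $N(\varepsilon)=1$. It shows $\varepsilon\equiv \mathrm{Tr}(\varepsilon)+2$ modulo squares in every completion of $F$; since $\varepsilon$ is a unit, $\mathrm{Tr}(\varepsilon)+2$ has even $2$-adic valuation and its odd part is, up to a square, a subproduct of $\{q_1,q_2\}$ coming from the factorisation $\mathrm{Tr}(\varepsilon)^2-4=q_1q_2U^2$. As every such subproduct is $\equiv1$ or $3\pmod8$ (because $q_1\equiv q_2\equiv3\pmod8$), one gets $\varepsilon\equiv1$ or $3\pmod8$ dyadically, whence $\left(\frac{\varepsilon,\,2d}{\mathfrak{p}_2}\right)=\left(\frac{\varepsilon,\,2d}{\mathfrak{p}_2'}\right)=1$. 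For an odd prime $r\mid d$ split in $F$, with prime $\mathfrak{r}$ above it, the tame symbol gives $\left(\frac{\varepsilon,\,2d}{\mathfrak{r}}\right)=\left(\frac{\mathrm{Tr}(\varepsilon)+2}{r}\right)=\left(\frac{q_1}{r}\right)$, using that the odd part of $\mathrm{Tr}(\varepsilon)+2$ is exactly $q_1$ (equivalently $q_2$, which agrees at a split $r$). Establishing this last point is the main obstacle: it rests on the classical ``mixed'' decomposition $\{\,\mathrm{oddpart}(\mathrm{Tr}(\varepsilon)-2),\,\mathrm{oddpart}(\mathrm{Tr}(\varepsilon)+2)\,\}=\{q_1,q_2\}$ of the fundamental unit of $\QQ(\sqrt{q_1q_2})$, which must be invoked (or reproved) under $q_1\equiv q_2\equiv3\pmod8$; the competing decomposition $\{1,q_1q_2\}$ would make the stated criterion fail, so excluding it is precisely what the structure of $\varepsilon$ and the congruence hypotheses are needed for.

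Putting the computations together finishes the proof. The dyadic and inert contributions of $\varepsilon$ being trivial, $\varepsilon$ is a global norm iff $\left(\frac{q_1}{r}\right)=1$ for every prime $r\mid d$ with $\left(\frac{q_1q_2}{r}\right)=1$; moreover $-\varepsilon$ is never a norm, since its dyadic symbol is $\left(\frac{-1,\,2d}{\mathfrak{p}_2}\right)\left(\frac{\varepsilon,\,2d}{\mathfrak{p}_2}\right)=-1$. Hence the image of $\{-1,\varepsilon\}$ has rank $1$ exactly when $\varepsilon$ is a norm, i.e. $e_{K/F}=1$ iff $\left(\frac{q_1}{r}\right)=1$ for all such $r$, and therefore $e_{K/F}=2$ iff there is a prime divisor $r$ of $d$ with $\left(\frac{q_1q_2}{r}\right)=1$ and $\left(\frac{q_1}{r}\right)=-1$, as claimed.
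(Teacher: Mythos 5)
Your proposal is correct and follows the same underlying strategy as the paper: reduce $e_{K/F}$ to the local norm residue symbols of $-1$ and $\varepsilon_{q_1q_2}$ at the primes of $F$ ramified in $K$, observe that $-1$ fails to be a local norm at the dyadic primes while $\varepsilon_{q_1q_2}$ is a local norm there and at the inert primes, so that everything hinges on the tame symbols $\left(\frac{q_1}{r}\right)$ at the split primes. The only real difference is that the paper simply quotes the four symbol values
$\left(\frac{\varepsilon_{q_1q_2},\,2d}{\mathcal R}\right)=\left(\frac{q_1}{r}\right)$, $\left(\frac{-1,\,2d}{\mathcal R}\right)=\left(\frac{-1}{r}\right)$, $\left(\frac{\varepsilon_{q_1q_2},\,2d}{\mathfrak 2}\right)=1$, $\left(\frac{-1,\,2d}{\mathfrak 2}\right)=-1$
from the proof of \cite[Theorem 3.3]{Azmouh2-rank}, whereas you rederive them via $(1+\varepsilon)^2=(\mathrm{Tr}(\varepsilon)+2)\varepsilon$; this makes the lemma self-contained at the cost of the one ingredient you flag but do not prove, namely the mixed decomposition. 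That ingredient is easily closed here: writing $\varepsilon_{q_1q_2}=a+b\sqrt{q_1q_2}$, the hypothesis $q_1q_2\equiv1\pmod 8$ forces $a$ odd (otherwise $a^2\equiv 1+q_1q_2b^2\equiv 2\pmod 8$), hence $b$ even and $\frac{a+1}{2}\cdot\frac{a-1}{2}=q_1q_2\left(\frac{b}{2}\right)^2$ with coprime factors; the decomposition $\{\mathrm{oddpart}(a-1),\mathrm{oddpart}(a+1)\}\equiv\{1,q_1q_2\}$ modulo squares would give $\sqrt{2\varepsilon_{q_1q_2}}=\sqrt{2}\left(u+v\sqrt{q_1q_2}\right)$, i.e.\ $\sqrt{\varepsilon_{q_1q_2}}\in F$, contradicting fundamentality, so the odd part of $\mathrm{Tr}(\varepsilon)+2=2(a+1)$ is $q_1$ or $q_2$ modulo squares, exactly as your dyadic and tame evaluations require (and at a split $r$ the two choices agree since $\left(\frac{q_1}{r}\right)=\left(\frac{q_2}{r}\right)$). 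Two minor points: the equality of symbols at the two conjugate primes above a split $r$ (implicit when you say each split prime contributes a square to the product formula) follows from $\varepsilon^{\sigma}=\varepsilon^{-1}$; and, exactly as in the paper's proof, the argument tacitly assumes at most one of $q_1,q_2$ divides $d$, which is the case in every application of the lemma.
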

	\begin{proof}Let $r$ be an odd prime divisor of $d$ and $\mathcal{R}$ be a prime ideal of $F$ lying above $r$. Assume that $\left(\frac{q_1q_2}{r}\right)=1$. 
		As in the proof of \cite[Theorem  3.3]{Azmouh2-rank}, we have:
		$$	\left(\frac{\vep_{q_1q_2},\, 2d}{ \mathcal R}\right)=\left(\frac{q_1}{r}\right), \quad 
		\left(\frac{-1,\, 2d}{ \mathcal R}\right)=\left(\frac{-1}{r}\right)$$
		
		$$	\left(\frac{\vep_{q_1q_2},\, 2d}{ \mathfrak 2}\right)=1, \quad 
		\left(\frac{-1,\, 2d}{ \mathfrak 2}\right)=-1.$$	
		Thus, $e_{K/F}\in\{1,2\}$ and $e_{K/F}=2$ if and only if    $\left(\frac{q_1}{r}\right)=-1$.
		We similarly check that if all prime divisors $r$ of $d$ are such that 	 $\left(\frac{q_1q_2}{r}\right)=-1$, then $e_{K/F}=1$.
	\end{proof}

	\begin{lemma}\label{lemma2qC} Let $K=\QQ(\sqrt{q_1q_2},\sqrt{2d})$, where $q_1\equiv3 \pmod 4$,  $q_2\equiv3 \pmod 8$ are two prime numbers  and    $d\equiv 3 \pmod 4$ is a  positive square-free integer  that is not divisible by   $q_1q_2$  
		and $\delta\in \{1,q_1,q_2\}$.
		Then $\2r(A(K))\leq 2$ if and only if $K$ takes one of the following forms:
		\begin{enumerate}[$1)$]
			\item$K=\QQ(\sqrt{q_1q_2},\sqrt{2\delta r})$ where r is an odd prime number. {\bf In  this case}, we have $\2r(A(K))\in \{1,2\}$. More precisely, $\2r(A(K))=2$ if and only if one of the following conditions holds:
			\begin{enumerate}[$a)$]
				\item $q_1\equiv3\pmod8$ and $\left(\frac{q_1q_2}{r}\right)=\left(\frac{q_1}{r}\right)=1,$
				\item $q_1\equiv7\pmod8$ , $r\equiv1\pmod4$  and $\left(\frac{q_1q_2}{r}\right)=\left(\frac{q_1}{r}\right)=1.$

			\end{enumerate}



			\item $K=\QQ(\sqrt{q_1q_2},\sqrt{2\delta rs})$ where $r$ and $s$ are two odd prime numbers do not satisfy any of the conditions $\C2$ and $\C3$ 
			defined in Remark \ref{remcondic1c2}. {\bf In this case}, we have $ \2r(A(K))\in \{1,2\}$. More precisely,  $ \2r(A(K))=2 $ if and only if one of the following conditions holds:
			\begin{enumerate}[$a)$]
				\item $\left(\frac{q_1q_2}{r}\right)=\left(\frac{q_1q_2}{s}\right)=-1$,

				\item $q_1\equiv 3 \pmod 8$ , $\left(\frac{q_1q_2}{r}\right)=-\left(\frac{q_1q_2}{s}\right)=1$ and $\left(\frac{q_1}{r}\right)=-1$,

				\item $q_1\equiv 7 \pmod 8$, $\left(\frac{q_1q_2}{r}\right)=-\left(\frac{q_1q_2}{s}\right)=1$ and $[r\equiv3\pmod4$ or $\left(\frac{q_1}{r}\right)=-1]$,
				

				\item   $q_1\equiv 7 \pmod 8$, $\left(\frac{q_1q_2}{r}\right)=\left(\frac{q_1q_2}{s}\right)=1$ and we have one of the following conditions:
				\begin{enumerate}[$i)$]
					\item $r\equiv 3\pmod4$ and $\left(\frac{q_1}{r}\right)=-\left(\frac{q_1}{s}\right)=1$,
					
					\item $r\equiv 3\pmod4$,  $s\equiv1\pmod4$ and $\left(\frac{q_1}{s}\right)=-1$.
				\end{enumerate}
			\end{enumerate}
			

		\end{enumerate}	
	\end{lemma}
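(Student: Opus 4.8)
The plan is to apply the ambiguous class number formula of Lemma \ref{AmbiguousClassNumberFormula} to the QO-extension $K/F$, where $F=\QQ(\sqrt{q_1q_2})$ has odd class number. This yields
$$\rg(A(K))=t_{K/F}-1-e_{K/F},\qquad e_{K/F}\in\{0,1,2\},$$
so that $\rg(A(K))\le 2$ is equivalent to $t_{K/F}\le 5$ together with the precise value of $e_{K/F}$. First I would determine $t_{K/F}$, the number of primes of $F$ ramifying in $K=F(\sqrt{2d})$. Since $K$ and $F$ are totally real, no infinite place ramifies, so $t_{K/F}$ counts the finite primes above $2d$. The novelty compared with Lemma \ref{lemmaq1q2Bd=3mod4} is that the rational prime $2$ now ramifies in $K/F$: its contribution is governed by the splitting of $2$ in $F$, i.e. by $q_1q_2\bmod 8$. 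Because $q_2\equiv3\pmod 8$, one has $q_1q_2\equiv1\pmod 8$ exactly when $q_1\equiv3\pmod 8$ and $q_1q_2\equiv5\pmod 8$ exactly when $q_1\equiv7\pmod 8$; hence $2$ splits in $F$ (contributing $2$ to $t_{K/F}$) in the former case and is inert (contributing $1$) in the latter. Each odd prime $r\mid d$ with $r\neq q_1,q_2$ contributes $2$ or $1$ according as $\left(\frac{q_1q_2}{r}\right)=1$ or $-1$, while the ramified primes $q_1$ or $q_2$ (present when $\delta\in\{q_1,q_2\}$) contribute $1$ each.

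Feeding the bound $t_{K/F}\le 5$ into these counts restricts $d$ to the factorization types $d=\delta r$ and $d=\delta rs$; any factorization with more prime factors (or with too many split primes) already forces $t_{K/F}\ge 6$ and hence $\rg(A(K))\ge 3$. For the two surviving types I would then compute $e_{K/F}$, which is the technical core. Writing $2^{e_{K/F}}=[E_F:E_F\cap N_{K/F}(K^*)]$, this index is read off from the Hilbert (norm residue) symbols $\left(\frac{\vep_{q_1q_2},\,2d}{\mathcal P}\right)$ and $\left(\frac{-1,\,2d}{\mathcal P}\right)$ at each ramified prime $\mathcal P$, exactly as in \cite[Theorems 3.3 and 3.4]{Azmouh2-rank}. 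At the prime(s) above $2$ I would use the local evaluations recorded in Lemma \ref{corrlemma}, in particular $\left(\frac{-1,\,2d}{\mathfrak 2}\right)=-1$, which forces $e_{K/F}\ge 1$ whenever $q_1\equiv3\pmod8$; at an odd prime above $r$ the symbols reduce to $\left(\frac{q_1}{r}\right)$ and $\left(\frac{-1}{r}\right)$. Assembling these local contributions and imposing the Hilbert product formula produces the explicit value of $e_{K/F}\in\{0,1,2\}$, from which the stated conditions $a)$--$d)$ characterising $\rg(A(K))=2$ are extracted. Crucially, in the case $q_1\equiv3\pmod8$ (so $q_1q_2\equiv1\pmod8$) I would replace the flawed computation of \cite[Theorem 3.4]{Azmouh2-rank} by the corrected value of $e_{K/F}$ supplied by Lemma \ref{corrlemma}.

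The main obstacle is precisely this determination of $e_{K/F}$: one must track the behaviour of the two generating symbols at every ramified place, split the analysis along $q_1\equiv3$ versus $q_1\equiv7\pmod 8$ and along the sign patterns of $\left(\frac{q_1q_2}{r}\right),\left(\frac{q_1q_2}{s}\right)$, and keep the bookkeeping consistent with the exclusion of the configurations $\C2$ and $\C3$ of Remark \ref{remcondic1c2}, which are set aside in the statement and handled separately (they would otherwise overlap with factorizations already accounted for). Once $e_{K/F}$ is pinned down in each subcase, comparing $t_{K/F}-1-e_{K/F}$ with $2$ gives the asserted equivalences, and the complementary factorization types are dismissed by the rank lower bound noted above, completing the proof.
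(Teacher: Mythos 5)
Your overall strategy is the same as the paper's: apply Lemma \ref{AmbiguousClassNumberFormula} to $K/F$ with $F=\QQ(\sqrt{q_1q_2})$, bound $t_{K/F}$, enumerate the possible factorizations of $d$, and evaluate $e_{K/F}$ via norm residue symbols using \cite[Theorems 3.3 and 3.4]{Azmouh2-rank} corrected by Lemma \ref{corrlemma}. However, two steps in your reduction are genuinely wrong. First, your count of $t_{K/F}$ is off when $\delta\in\{q_1,q_2\}$: the prime $\mathfrak{q}$ of $F$ above $q_1$ (say) is already ramified in $F/\QQ$, so $v_{\mathfrak{q}}(2d)=2$ is even and $\mathfrak{q}$ does \emph{not} ramify in $K=F(\sqrt{2d})$; it contributes $0$, not $1$, to $t_{K/F}$. (This is consistent with the paper's counts, e.g. $t_{K/F}=3$ for $d=\delta r$ with $q_1q_2\equiv1\pmod 8$ and $\left(\frac{q_1q_2}{r}\right)=-1$, independently of $\delta$.) With your convention every count with $\delta\neq 1$ is inflated by one, which would wrongly exclude, for instance, $d=q_1rs$ with $r\equiv s\equiv3\pmod4$ both split in $F$ and $q_1\equiv7\pmod8$ --- a configuration that genuinely occurs in item $2)d)$ of the statement.

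Second, the claim that $t_{K/F}\le5$ restricts $d$ to the types $\delta r$ and $\delta rs$ is false: since an inert prime contributes only $1$, the types $d=\delta rst$ (e.g. all three primes inert and $q_1q_2\equiv1\pmod8$, giving $t_{K/F}=2+3=5$) and $d=\delta rstk$ (all four inert, $q_1q_2\equiv5\pmod8$, giving $t_{K/F}=1+4=5$) also satisfy the bound. For these one only gets $\rg(A(K))=t_{K/F}-1-e_{K/F}\ge 5-1-2=2$ a priori, so they cannot be dismissed on the count alone; the paper excludes them by actually computing $e_{K/F}\le 1$ in each such case, whence $\rg(A(K))\ge3$. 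As written, your argument leaves these factorization types unexamined, so the ``only if'' direction of the lemma is not established. The remaining part of your plan (the symbol computations at the places above $2$ and at the odd split primes for $d=\delta r$ and $d=\delta rs$) matches the paper's proof in outline and is where the real case-by-case work lies, but it is only sketched here.
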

	\begin{proof}
		Assume that $K=\QQ(\sqrt{q_1q_2},\sqrt{2d})$ where   $q_1\equiv3 \pmod 4$,  $q_2\equiv3 \pmod 8$ are two prime numbers  and {\color{red}$d\equiv 3\pmod 4$} is any       positive   square-free integer that is not divisible by   $q_1q_2$.
		Let $t_{K/F}$ be the number of prime ideals of $ F=\QQ(\sqrt{q_1q_2})$ that are ramified in $K$.   
		Thus,  $ \2r(A(K))=t_{K/F}-1-e_{K/F}$. 
		As  $e_{K/F}\in\{0,1,2\}$, then the inequality $\2r(A(K))\leq 2$ implies that $t_{K/F}\leq 5$. Therefore, $d$ takes one of the following five forms
		\begin{eqnarray*}
			&\delta r, &\ \delta rs \text{ whenever }  q_1q_2\equiv 5\pmod 8 \text{ or } \left(\frac{q_1q_2}{s}\right)=-1 , \\
			& \delta  rst&  \text{ whenever }  q_1q_2\equiv 1\pmod 8 \text{ and }  \left(\frac{q_1q_2}{r}\right)= \left(\frac{q_1q_2}{s}\right)=\left(\frac{q_1q_2}{t}\right)=-1  ,\\
			& \delta  rst&  \text{ whenever }  q_1q_2\equiv 5\pmod 8 \text{ and }     \left(\frac{q_1q_2}{s}\right)=\left(\frac{q_1q_2}{t}\right)=-1,\\
			&\delta rstk& \text{ whenever }  q_1q_2\equiv 5\pmod 8 \text{ and }  \left(\frac{q_1q_2}{r}\right)=\left(\frac{q_1q_2}{s}\right)=\left(\frac{q_1q_2}{t}\right)=\left(\frac{q_1q_2}{k}\right)=-1,
		\end{eqnarray*}
		where $\delta\in \{1,q_1,q_2\}$ and $r$, $s$, $t$ and   $k$   are prime numbers all different of $q_1$ and $q_2$.
		\begin{enumerate}[$\blacktriangleright$]

			\item 	  Let  $d=\delta r$.   
			We have:
			\begin{enumerate}[$\star$]
				\item If $q_1q_2\equiv 1\pmod 8$ and  $ \left(\frac{q_1q_2}{r}\right)=-1$, then $ \2r(A(K))=3-1-e_{K/F}$ and so      \cite[Theorems 3.4]{Azmouh2-rank}  gives
				$ \2r(A(K))=1$.

				\item If $q_1q_2\equiv 1\pmod 8$ and  $ \left(\frac{q_1q_2}{r}\right)=1$, then by Lemma \ref{corrlemma} we have    $ \2r(A(K))\in \{1,2  \}$. More precisely,  $ \2r(A(K))=1$ if and only if $\left(\frac{q_1 }{r}\right)=-1$.



				\item If $q_1q_2\equiv 5\pmod 8$ and  $ \left(\frac{q_1q_2}{r}\right)=-1$, then  $ \2r(A(K))=2-1-e_{K/F}$ and \cite[Theorems 3.3]{Azmouh2-rank} implies that    $ \2r(A(K))=1$.

				\item If $q_1q_2\equiv 5\pmod 8$ and  $ \left(\frac{q_1q_2}{r}\right)=1$, then $ \2r(A(K))=3-1-e_{K/F}\in \{1,2\}$ and  \cite[Theorems 3.3]{Azmouh2-rank} implies
				$ e_{K/F}=0$ if and only if $\left(\frac{-1}{r}\right)=\left(\frac{q_1}{r}\right)=1,$  thus $\2r(A(K))=2$ if and only if $\left(\frac{-1}{r}\right)=\left(\frac{q_1}{r}\right)=1.$ 
				
			\end{enumerate}
			\item  Let  $d=  \delta rs$ with   $q_1q_2\equiv 5\pmod 8 \text{ or } \left(\frac{q_1q_2}{s}\right)=-1$. 
			\begin{enumerate}[$\star$]
				\item Assume that $q_1q_2\equiv 1\pmod 8$. We have  : 
				\begin{enumerate}[$i)$]
					\item if $\left(\frac{q_1q_2}{r}\right)=\left(\frac{q_1q_2}{s}\right)=-1$, then \cite[Theorems 3.4]{Azmouh2-rank} gives $\2r(A(K))=4-1-e_{K/F}=2$.  
					\item if $\left(\frac{q_1q_2}{r}\right)=-\left(\frac{q_1q_2}{s}\right)=1$, then $\2r(A(K))=5-1-e_{K/F}=4-e_{K/F}$. By Lemma \ref{corrlemma} $e_{K/F}\in \{1,2\}$, more precisely, $e_{K/F}=2$ if and only if $\left(\frac{q_1}{r}\right)=- 1$.
					

				\end{enumerate}

				\item Assume that $q_1q_2\equiv 5\pmod 8$. We have:
				\begin{enumerate}[$i)$]
					\item If $\left(\frac{q_1q_2}{r}\right)=\left(\frac{q_1q_2}{s}\right)=-1$, then \cite[Theorems 3.3]{Azmouh2-rank} gives $\2r(A(K))=3-1-e_{K/F}=2$.
					\item If $\left(\frac{q_1q_2}{r}\right)=-\left(\frac{q_1q_2}{s}\right)=1$, then $\2r(A(K))=4-1-e_{K/F}=3-e_{K/F}$ and by \cite[Theorems 3.3]{Azmouh2-rank} we have $e_{K/F}\in \{0,1\}$, more precisely, $e_{K/F}=1$ if and only if $r\equiv3\pmod4$ or $\left(\frac{q_1}{r}\right)=-1.$
					\item If $\left(\frac{q_1q_2}{r}\right)=\left(\frac{q_1q_2}
					{s}\right)=1$, then  \cite[Theorems 3.3]{Azmouh2-rank}, $\2r(A(K))=5-1-e_{K/F}=4-e_{K/F}\in \{4,3,2\}$. More precisely,  $\2r(A(K))=2$ if and only if [$r\equiv 3\pmod4$ and $\left(\frac{q_1}{r}\right)=-\left(\frac{q_1}{s}\right)=1$] or [$r\equiv 3\pmod4$,  $s\equiv1\pmod4$and $\left(\frac{q_1}{s}\right)=-1$]. 
				\end{enumerate}
				
			\end{enumerate}

			\item Let 	$d=  \delta rst$ with $q_1q_2\equiv 1\pmod 8 \text{ and }  \left(\frac{q_1q_2}{r}\right)= \left(\frac{q_1q_2}{s}\right)=\left(\frac{q_1q_2}{t}\right)=-1 $. 
			In this case, \cite[Theorems 3.4]{Azmouh2-rank} gives $\2r(A(K))=5-1-e_{K/F}=3 $.
			\item Let 	$d=  \delta rst$ with $q_1q_2\equiv 5\pmod 8 \text{ and }  \left(\frac{q_1q_2}{s}\right)= \left(\frac{q_1q_2}{t}\right)=-1 $. We have:
			\begin{enumerate}[$\star$]
				\item If $\left(\frac{q_1q_2}{r}\right)=-1$, then   \cite[Theorems 3.3]{Azmouh2-rank} gives  $e_{K/F}=0 $ and so $\2r(A(K))=3 $.
				\item If $\left(\frac{q_1q_2}{r}\right)=1$, then $\2r(A(K))=5-1-e_{K/F}=4-e_{K/F}\in \{3,4\} $. More precisely, $\2r(A(K))=4$ if and only if $r\equiv1\pmod4$ and $\left(\frac{q_1}{r}\right)=1$.
				
			\end{enumerate} 
			
			\item Let 	$d=  \delta rstk$ with $q_1q_2\equiv 5\pmod 8 \text{ and }  \left(\frac{q_1q_2}{r}\right)=\left(\frac{q_1q_2}{s}\right)= \left(\frac{q_1q_2}{t}\right)=\left(\frac{q_1q_2}{k}\right)=-1 $.
			By \cite[Theorems 3.3]{Azmouh2-rank}we have $\2r(A(K))=5-1-e_{K/F}=4$.
			
		\end{enumerate}

	\end{proof}

	Let us recall that $L$ is named to be   any   real biquadratic fields  of the forms 
	$$L:=\QQ(\sqrt{ q_1q_2},\sqrt{r})\text{ or }\QQ(\sqrt{ q_1q_2},\sqrt{rs}) \text{ where } r\equiv 1\pmod 8\text{ and }  \left(\frac{q_1q_2}{r}\right)=1. $$
	  The following lemma that gives the $2$-rank of the class group of some real triquadratic number fields.

	\begin{lemma}\label{realTruquad2-rankq1q1} Let $K= \QQ(\sqrt{q_1q_2},\sqrt{d})\not=L$ and  $K_1= \QQ(\sqrt{q_1q_2},\sqrt{d}, \sqrt{2})$  where $q_1\equiv7 \pmod 8$,  $q_2\equiv3 \pmod 8$ are two prime numbers and \textcolor{blue}{$d \equiv 3\pmod  4$ } is a positive square free integer that is not divisible by  $q_1q_2$.
		
		\noindent $\mathbf{1)}$	Assume that $K_1= \QQ(\sqrt{q_1q_2},\sqrt{d}, \sqrt{2})$,  where   $d=\delta r\equiv 3\pmod 4 $ with $r$ and $s$ are two prime  numbers and    $\delta\in \{1,q_1,q_2\}$.  
		We have  $$\rg(A(K_1))\in\{1,2\}.$$ More precisely:
		$	\rg(A(K_1)) =1$ if and only if    we have   one of the following conditions :  
		\begin{enumerate}[\indent$\bullet$]
			\item $r\equiv 3\pmod 8$,
			\item $r\equiv 5\pmod 8$, $q_1\equiv 7\pmod 8$ and  $\left(\frac{q_1}{ r}\right)=-1$.

			
		\end{enumerate}

		\noindent $\mathbf{2)}$ 	Assume that $K_1= \QQ(\sqrt{q_1q_2},\sqrt{d}, \sqrt{2})$ with $d=\delta rs\equiv 3\pmod 4$, where $r$ and $s$ are two prime numbers such that  $\rg(A(K)) =1$.
		We have:  $$\rg(A(K_1))\in\{2,3,4,5\}.$$
		More precisely,  we have:

		\noindent$\rg(A(K_1))=2$ if and only if, after a suitable permutation of $r$ and $s$, we have one of the following conditions:
		
		\begin{enumerate}[$\C1:$]
			\item $ \left(\frac{q_1q_2}{s}\right)=\left(\frac{q_1q_2}{r}\right)=-1$ and we have one of the following congruence conditions:
			\begin{enumerate}[$\bullet$]
				\item  $r\equiv 5\pmod 8$ and $s\equiv 3\pmod 8$ with $q_1\equiv 7\pmod 8$ and $\left(\frac{q_1}{r}\right)=-1$,
				
				\item $r\equiv 3\pmod 8$ and $s\equiv 3\pmod 8$ with  $q_1\equiv 7\pmod 8$ and $\left(\frac{q_1}{r}\right)\not=\left(\frac{q_1}{s}\right)$. 
				

			\end{enumerate}

			\item	$ \left(\frac{q_1q_2}{r}\right)=-\left(\frac{q_1q_2}{s}\right)=-1$ and we have one of the following congruence conditions:
			\begin{enumerate}[$\bullet$]
				\item  $ r\equiv         s\equiv 3\pmod 8$ with $q_1\equiv 7\pmod 8$ and $\left(\frac{q_1}{r}\right) \not=\left(\frac{q_1}{s}\right)$,
				\item $ r\equiv     3\pmod 8$ and  $    s\equiv 5\pmod 8$  with $q_1\equiv 7\pmod 8$ and $\left(\frac{q_1}{s}\right) =-1$,  
				
				\item   $ r\equiv     5\pmod 8$ and  $    s\equiv 3\pmod 8$ with      $q_1\equiv 7\pmod 8$ and $\left(\frac{q_1}{r}\right) =-1$.
			\end{enumerate}

			\item	$ \left(\frac{q_1q_2}{r}\right)=\left(\frac{q_1q_2}{s}\right)=1$ and we have one of the following congruence conditions:
			\begin{enumerate}[$\bullet$]
				\item  $r\equiv 3\pmod 8$ and $s\equiv 3\pmod 8$  with       $ q_1\equiv 7\pmod 8$ and $\left(\frac{q_1}{r}\right) \not=\left(\frac{q_1}{s}\right) $,
				
				\item   $r\equiv 5\pmod 8$ and $s\equiv 3\pmod 8$  with       $ q_1\equiv 7\pmod 8$ and $\left(\frac{q_1}{r}\right) =-1$.
			\end{enumerate}

		\end{enumerate}

		\noindent $\rg(A(K_1))=3$ if and only if, after a suitable permutation of $r$ and $s$, we have one of the following conditions:
		\begin{enumerate}[$\C1:$]
			\item $ \left(\frac{q_1q_2}{s}\right)=\left(\frac{q_1q_2}{r}\right)=-1$ and we have one of the following congruence conditions:
			\begin{enumerate}[$\bullet$]
				\item    $r\equiv 5\pmod 8$ and $s\equiv 3\pmod 8$ with      $ q_1\equiv 7\pmod 8$ and $\left(\frac{q_1}{r}\right) =1$,

				\item  $r\equiv 5\pmod 8$ and $s\equiv 5\pmod 8$  with    $q_1\equiv 7\pmod 8$ and $[\left(\frac{q_1}{r}\right) =-1$ or $\left(\frac{q_1}{s}\right) =-1]$,
				
				\item $r\equiv 3\pmod 8$ and $s\equiv 3\pmod 8$ with      $q_1\equiv 7\pmod 8$ and $\left(\frac{q_1}{r}\right) =\left(\frac{q_1}{s}\right)  $,

				\item  $r\equiv 3\pmod 8$ and $s\equiv 7\pmod 8$ with    $ q_1\equiv 7\pmod 8$,
				
				\item  $r\equiv 5\pmod 8$ and $s\equiv 7\pmod 8$ with   $q_1\equiv 7\pmod 8$ and $\left(\frac{q_1}{r}\right) =-1$,

				\item $r\equiv 5\pmod 8$ and $s\equiv 1\pmod 8$ with    $q_1\equiv 7\pmod 8$ and $\left(\frac{q_1}{r}\right) =-1$,

				\item $r\equiv 3\pmod 8$ and $s\equiv 1\pmod 8$.
				
			\end{enumerate}

			\item	$ \left(\frac{q_1q_2}{r}\right)=-\left(\frac{q_1q_2}{s}\right)=-1$ and we have one of the following congruence conditions:
			\begin{enumerate}[$\bullet$]
				
				\item  $r\equiv   s\equiv 5\pmod 8$ with     $q_1\equiv 7\pmod 8$ and $[\left(\frac{q_1}{r}\right) =-1$ or $ \left(\frac{q_1}{s}\right)=-1  ]$,
				
				\item $r\equiv   s\equiv 3\pmod 8$ with    $q_1\equiv 7\pmod 8$ and  $\left(\frac{q_1}{r}\right) =\left(\frac{q_1}{s}\right)$,
				
				\item $r\equiv     3\pmod 8$ and  $   s\equiv 5\pmod 8$ with   $q_1\equiv 7\pmod 8$ and    $\left(\frac{q_1}{s}\right)=1  $,

				\item $ r\equiv     5\pmod 8$ and  $    s\equiv 3\pmod 8$ with     $ q_1\equiv 7\pmod 8$ and $\left(\frac{q_1}{r}\right) =1$,

				\item       $ r\equiv     1\pmod 8$ and   $    s\equiv 5\pmod 8$    with    $q_1\equiv 7\pmod 8$ and $\left(\frac{q_1}{s}\right) =-1$.
				

			\end{enumerate}

			\item	$ \left(\frac{q_1q_2}{r}\right)=\left(\frac{q_1q_2}{s}\right)=1$ and we have one of the following congruence conditions:
			\begin{enumerate}[$\bullet$]
				\item   $r\equiv 7\pmod 8$ and $s\equiv 5 \pmod 8$ with       $q_1\equiv 7\pmod 8$ and $\left(\frac{q_1}{s}\right) =-1$,

				\item $r\equiv 7\pmod 8$ and $s\equiv 3 \pmod 8$ with      $ q_1\equiv 7\pmod 8$ and $\left(\frac{q_1}{r}\right) \not=\left(\frac{q_1}{s}\right)$,
				
				\item $r\equiv 3\pmod 8$ and $s\equiv 3\pmod 8$ with       $q_1\equiv 7\pmod 8$ and $\left(\frac{q_1}{r}\right) =\left(\frac{q_1}{s}\right) $,

				\item $r\equiv 5\pmod 8$ and $s\equiv 5\pmod 8$ with       $ q_1\equiv 7\pmod 8$ and $[\left(\frac{q_1}{r}\right)=-1$ or $  \left(\frac{q_1}{s}\right)=-1 ]$,
				
				\item $r\equiv 5\pmod 8$ and $s\equiv 3\pmod 8$ with      $ q_1\equiv 7\pmod 8$ and $\left(\frac{q_1}{r}\right)=1$.
			\end{enumerate}

		\end{enumerate}

		\noindent $\mathbf{3)}$ 	Assume that $K_1= \QQ(\sqrt{q_1q_2},\sqrt{d}, \sqrt{2})$ with $d=\delta rst\equiv 1\pmod 4$, where $r$, $s$ and $t$ are two prime numbers such that
		$ \left(\frac{q_1q_2}{r}\right)=\left(\frac{q_1q_2}{s}\right)=\left(\frac{q_1q_2}{t}\right)=-1$ or $ \left(\frac{q_1q_2}{r}\right)=-\left(\frac{q_1q_2}{s}\right)=\left(\frac{q_1q_2}{t}\right)=-1$. Then, we have: 
		$$\rg(A(K_1))\in\{4,5,6\}.$$

	\end{lemma}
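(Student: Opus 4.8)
The plan is to compute $\rg(A(K_1))$ by exhibiting $K_1$ as a QO-extension of a fixed biquadratic base field of odd $2$-class number and then invoking the ambiguous class number formula of Lemma \ref{AmbiguousClassNumberFormula}. Since $q_1\equiv 7$ and $q_2\equiv 3\pmod 8$, the natural base is $F=\QQ(\sqrt 2,\sqrt{q_1q_2})$, which has odd $2$-class number: applying genus theory over $\QQ(\sqrt 2)$ one finds $q_1$ splits and $q_2$ is inert in $\QQ(\sqrt 2)$, so $t_{F/\QQ(\sqrt 2)}=3$ and $\rg(A(F))=2-e_{F/\QQ(\sqrt 2)}=0$ after the unit-index computation. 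As $d$ is coprime to $2q_1q_2$ we may write $K_1=F(\sqrt d)$, so $K_1/F$ is a ramified QO-extension and Lemma \ref{AmbiguousClassNumberFormula} gives
$$\rg(A(K_1))=t_{K_1/F}-1-e_{K_1/F},$$
where $2^{e_{K_1/F}}=[E_F:E_F\cap N_{K_1/F}(K_1^{*})]$. Because $K_1$ is totally real there is no infinite ramification, so everything reduces to the two finite quantities $t_{K_1/F}$ and $e_{K_1/F}$.

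First I would determine $t_{K_1/F}$. The primes ramifying in $K_1/F$ are those lying above the prime divisors of $d$, together with the prime above $2$ when $d\equiv 3\pmod 4$; the subcases $\delta\in\{1,q_1,q_2\}$ are handled uniformly, the prime above $\delta$ contributing to $t$ or not according to whether it is already ramified in $F/\QQ$. For an odd prime $\ell\mid d$ the number of primes of $F$ above $\ell$ is governed by the splitting of $\ell$ in the three quadratic subfields $\QQ(\sqrt 2),\QQ(\sqrt{q_1q_2}),\QQ(\sqrt{2q_1q_2})$, hence by $\left(\frac{2}{\ell}\right)$ (i.e.\ the class of $\ell$ modulo $8$) and by $\left(\frac{q_1q_2}{\ell}\right)$. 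Summing these contributions over $\ell\in\{r\}$, $\{r,s\}$, $\{r,s,t\}$ in parts $1)$, $2)$, $3)$ respectively produces, in each case, the admissible values of $t_{K_1/F}$; this is exactly what forces the three different ranges $\{1,2\}$, $\{2,3,4,5\}$ and $\{4,5,6\}$, the count of ramified primes growing with the number of prime factors of $d$.

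The harder step, which I expect to be the main obstacle, is the unit index $e_{K_1/F}$. Here I would first pin down a fundamental system of units of $F$ using the Kuroda--Wada descent recalled after Lemma \ref{wada's f.}, expressing $E_F$ in terms of $\vep_2=1+\sqrt 2$, $\vep_{q_1q_2}$ and a possible square-root generator of a product of these; getting this system exactly right is delicate, since the extra square-root generator shifts the index. With explicit generators in hand, $e_{K_1/F}$ is computed by evaluating the norm residue symbols $\left(\frac{u,\,d}{\mathfrak P}\right)$ of each fundamental unit $u$ at the ramified primes $\mathfrak P$ above $r,s,t$ and above $2$, exactly as in Lemmas \ref{corrlemma}, \ref{lemmaq1q2Bd=3mod4}, \ref{lemma2qC} and the symbol evaluations of \cite{Azmouh2-rank}. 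These symbols collapse, via the explicit shape of the units, to the Legendre symbols $\left(\frac{q_1}{r}\right),\left(\frac{q_1}{s}\right)$ and to congruences of $r,s$ modulo $8$, which is precisely the source of the discriminating conditions $\C1$, $\C2$, $\C3$ and of hypotheses such as $\left(\frac{q_1}{r}\right)\neq\left(\frac{q_1}{s}\right)$ in the statement.

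Finally I would assemble the two computations: inserting $e_{K_1/F}\in\{0,1,2,3\}$ and the value of $t_{K_1/F}$ into $\rg(A(K_1))=t_{K_1/F}-1-e_{K_1/F}$ yields the asserted ranges, and the exact equalities ($\rg(A(K_1))=1$ in part $1)$; $\rg(A(K_1))=2$ or $3$ in part $2)$) are read off by matching each admissible pair $(t_{K_1/F},e_{K_1/F})$ against the configurations in $\C1$, $\C2$, $\C3$; in part $2)$ the hypothesis $\rg(A(K))=1$ simply restricts attention to the $(r,s)$ already classified in Lemma \ref{lemmaq1q2Bd=3mod4}, keeping the list finite. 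For part $3)$, where only a range is claimed, it suffices to bound $t_{K_1/F}$ under the two splitting hypotheses on $(r,s,t)$ and use $e_{K_1/F}\le 3$ to obtain $\rg(A(K_1))\in\{4,5,6\}$ without isolating the exact value.
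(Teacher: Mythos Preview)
Your overall plan—taking $F=\QQ(\sqrt{2},\sqrt{q_1q_2})$ as QO-base, applying Lemma \ref{AmbiguousClassNumberFormula}, and computing $t_{K_1/F}$ and $e_{K_1/F}$ through norm residue symbols of an explicit fundamental system of units of $F$—matches the paper's framework exactly, including the unit group $E_F=\langle-1,\vep_2,\vep_{q_1q_2},\sqrt{\vep_{2q_1q_2}\vep_{q_1q_2}}\rangle$. The paper, however, does not carry out the full direct symbol evaluation you propose. Instead it sets $d'=q_1^{\pm1}d\equiv 1\pmod 4$, introduces $K_1'=\QQ(\sqrt{q_1q_2},\sqrt{2},\sqrt{d'})$, and proves the single identity $\rg(A(K_1))=\rg(A(K_1'))+1$: the odd primes ramified in $K_1/F$ and in $K_1'/F$ coincide, the only difference being the unique prime of $F$ above $2$, which ramifies in $K_1/F$ (since $d\equiv 3\pmod4$) but not in $K_1'/F$; a product-formula argument then shows that the norm residue symbol at this prime is $1$ for every generator of $E_F$, whence $e_{K_1/F}=e_{K_1'/F}$. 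Everything is thereby reduced to \cite[Lemma 3.2]{ChemseddinGreenbergConjectureI}, where the $d'\equiv 1\pmod 4$ case was already completely tabulated. Your direct computation would work but essentially rederives that lemma; the paper's reduction is much shorter.

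One small correction: you assert ``$d$ is coprime to $2q_1q_2$'', but the hypothesis only forbids $q_1q_2\mid d$, so $d=\delta r$ with $\delta\in\{q_1,q_2\}$ is allowed. This does not in fact cause trouble—since $q_1,q_2$ ramify in $F/\QQ$ with local uniformizer $\sqrt{q_1q_2}$, adjoining $\sqrt{\delta r}$ at a prime above $q_i$ amounts to adjoining the square root of a local \emph{unit}, hence gives no further ramification—but your sentence about ``the prime above $\delta$ contributing to $t$ or not according to whether it is already ramified in $F/\QQ$'' should be sharpened accordingly.
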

	\begin{proof}
		Let $d'=\frac{1}{q_1}d$ or $q_1d$ according to whether $q_1$ devises $d$ or not. Notice that $d'$ is an integers such that $d'\equiv1\pmod 4$. Let $K_1'=\QQ(\sqrt{q_1q_2},\sqrt{2},\sqrt{d'}) $ and $F=\QQ(\sqrt{q_1q_2},\sqrt{2}) $. Then $\rg(K_1)=t_{K_1/F}-1-e_{K_1/F}$. We have 	$E_{F}=\langle-1, \vep_{2},\vep_{q_1q_2}, \sqrt{\vep_{2q_1q_2}\vep_{q_1q_2}}  \rangle $  is the unit group of $F$.
		Let $\vep\in \{-1, \vep_{2},\vep_{q_1q_2}, \sqrt{\vep_{2q_1q_2}\vep_{q_1q_2}}\}$.

			Let $\ell \not\in\{q_1,q_2\}$ (resp. $r\not\in\{q_1,q_2\}  $) denotes an odd prime divisor  of $d$ that is not satisfying (resp. that is satisfying)   the conditions $[\ell\equiv 7\pmod 8$ and  $\left(\frac{q_1q_2}{ \ell}\right)=1]$ (resp. $[r\equiv 7\pmod 8$ and  $\left(\frac{q_1q_2}{ r}\right)=1]$).	 So    $\ell$ (resp. $r$) decomposes into two (resp. four) prime ideals  $\mathcal{L}$ and $\mathcal{L}'$ (resp. $\mathcal{R}_1$, $\mathcal{R}_2$, $\mathcal{R}_1'$ and $\mathcal{R}_2'$) of $F$, with $\mathcal{R}_1$ (resp. $\mathcal{R}_1'$) and  $\mathcal{R}_2$ (resp. $\mathcal{R}_2'$) are conjugate in the extension $F/\QQ(\sqrt{2})$.
			Thus, by the product formula of norm residue symbols, we have
			$$\left(\frac{\vep ,\, d}{ \mathfrak 2}\right)\prod_{\ell|d} \left(\frac{\vep ,\, d}{ \mathcal L}\right)\left(\frac{\vep ,\, d}{ \mathcal L'}\right)\prod_{r|d} \left(\frac{\vep ,\, d}{ \mathcal R_1}\right) \left(\frac{\vep ,\, d}{ \mathcal R_2}\right) \left(\frac{\vep ,\, d}{ \mathcal R_1'}\right) \left(\frac{\vep ,\, d}{ \mathcal R_2'}\right)=1,$$
			Since $\left(\frac{\vep ,\, d}{ \mathcal L}\right)=\left(\frac{\vep ,\, d}{ \mathcal L'}\right)$,
			$\left(\frac{\vep ,\, d}{ \mathcal R_1}\right) =\left(\frac{\vep ,\, d}{ \mathcal R_2}\right)$
			and $\left(\frac{\vep ,\, d}{ \mathcal R_1'}\right) =\left(\frac{\vep ,\, d}{ \mathcal R_2'}\right)$ (cf. The proof of \cite[Lemma 3.2]{ChemseddinGreenbergConjectureI}), 
			this implies that $\left(\frac{\vep ,\, d}{ \mathfrak 2}\right)=1$. Therefore, $\rg(A(K_1))=t_{K_1/F}-1-e_{K_1/F}=(t_{K_1'/F}+1)-1-e_{K_1'/F}=\rg(A(K_1'))+1$.
		
		Hence, the result follows from	    \cite[Lemma 3.2]{ChemseddinGreenbergConjectureI}.
	\end{proof}

	\begin{remark} Let $K$ be a real biquadratic field of the form $D)$ with  $K\not= L$.
		By combining  Lemma   \ref{realTruquad2-rankq1q1},  Lemma \ref{lemma2qC},  Lemma \ref{lemmaq1q2Bd=3mod4}      and  Lemma \ref{lm fukuda}, we get that $ \rg(A(K_\infty))\leq 2$ and $ \rg(A(K_\infty))=\rg( A(K))$ if and only if $K$ is taking one of the forms in  the items $1)$, $2)$,  $3)$,  $4)$, $5)$, $6)$, $7)$ and   $9)$ of the main first theorem. 
	\end{remark}

	\begin{lemma}\label{castrivial1}
		Let  $p\equiv1 \pmod 4$   and $d\equiv 1 \pmod 4$ are two prime numbers such that $\left(\frac{p}{d}\right)=-1$ or $[\left(\frac{p}{d}\right)=1$ and $\left(\frac{p}{d}\right)_4\not=\left(\frac{d}{p}\right)_4]$. Put  $K_1=\QQ(\sqrt{p}, \sqrt{d}, \sqrt{2})$, Then $h_2(K_1)$ is odd if and only if 
		$h_2(2pd)=4$ and $q(k_1)=1$, where $k_1=\QQ(\sqrt{pd}, \sqrt{2})$.
	\end{lemma}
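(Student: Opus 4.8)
The field $K_1=\QQ(\sqrt 2,\sqrt p,\sqrt d)$ is a real multiquadratic field of degree $2^3$, so the plan is to read $h_2(K_1)$ off Wada's class number formula (Lemma \ref{wada's f.}) with $n=3$. Here $v=n(2^{n-1}-1)=9$ and there are $s=7$ quadratic subfields, namely
$$\QQ(\sqrt 2),\ \QQ(\sqrt p),\ \QQ(\sqrt d),\ \QQ(\sqrt{pd}),\ \QQ(\sqrt{2p}),\ \QQ(\sqrt{2d}),\ \QQ(\sqrt{2pd}),$$
so that $h(K_1)=\tfrac{1}{2^9}\,q(K_1)\prod_{i=1}^{7}h(k_i)$. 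Since $q(K_1)$ is a power of $2$, taking $2$-parts shows that $h_2(K_1)$ is odd if and only if the $2$-adic valuation of $q(K_1)\prod_i h(k_i)$ is exactly $9$.

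First I would pin down the $2$-parts of the seven quadratic factors. As $p\equiv d\equiv 1\pmod 4$ are prime, genus theory shows that $\QQ(\sqrt p)$ and $\QQ(\sqrt d)$ (one ramified prime each) and $\QQ(\sqrt 2)$ have odd class number, so they drop out. The field $\QQ(\sqrt{2pd})$ contributes exactly the factor $h_2(2pd)$ appearing in the statement. For the three remaining fields $\QQ(\sqrt{pd})$, $\QQ(\sqrt{2p})$, $\QQ(\sqrt{2d})$ I would use the standard determination of the $2$-part of the class number of a real quadratic field, together with the classical criteria for the sign of the norm of the fundamental unit (to pass between the narrow and the wide class group). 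Here the hypothesis $\left(\frac{p}{d}\right)=-1$ or $[\left(\frac{p}{d}\right)=1$ and $\left(\frac{p}{d}\right)_4\neq\left(\frac{d}{p}\right)_4]$ is precisely what forces the $4$-rank of the narrow class group of $\QQ(\sqrt{pd})$ to vanish, hence fixes its $2$-part. This reduces $\prod_i h_2(k_i)$ to an explicit power of $2$ times $h_2(2pd)$.

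The heart of the argument is the unit index $q(K_1)=[E_{K_1}:\prod_{i=1}^{7}E_{k_i}]$. I would build a fundamental system of units of $K_1$ by Wada's method (the algorithm recalled right after Lemma \ref{wada's f.}): starting from the units of the quadratic subfields, one forms the products $\vep\vep^{\sigma}$ and adjoins the square roots of those elements of the compositum $E_{k_1}E_{k_2}E_{k_3}$ that become squares in the successive multiquadratic layers. The assumption $q(k_1)=1$ for the biquadratic field $k_1=\QQ(\sqrt{pd},\sqrt 2)$ records that \emph{no} new unit is produced at that intermediate layer, and it is exactly this vanishing that bounds how many square roots can appear in $K_1$, and hence fixes $v_2(q(K_1))$.

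Assembling the pieces, the $2$-part of Wada's formula reads
$$h_2(K_1)=\frac{q(K_1)}{2^9}\,h_2(\QQ(\sqrt{pd}))\,h_2(\QQ(\sqrt{2p}))\,h_2(\QQ(\sqrt{2d}))\,h_2(2pd),$$
and comparing $2$-adic valuations shows that the right-hand side equals $1$ exactly when $h_2(2pd)=4$ and $q(k_1)=1$, which is the claimed equivalence. The main obstacle is the unit-index step: determining precisely which products of fundamental units of the quadratic and biquadratic subfields are squares in $K_1$ is delicate, and it is there that the congruence hypotheses on $p,d$ and the assumption $q(k_1)=1$ must be used together; the norm-of-fundamental-unit bookkeeping for $\QQ(\sqrt{pd})$, $\QQ(\sqrt{2p})$, $\QQ(\sqrt{2d})$ is the secondary technical point.
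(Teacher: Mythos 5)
Your route is genuinely different from the paper's, but as written it has two real gaps. The paper never touches the octic field's class number formula: it observes that $K_1$ is the genus field of $k=\QQ(\sqrt{2pd})$, hence an unramified $(2,2)$-extension of $k$ whose $2$-class group has rank $2$, so that $h_2(K_1)$ is odd exactly when $h_2(2pd)=4$ and the $2$-class field tower of $k$ stops at $k^{(1)}$; the tower condition is then converted, via Proposition \ref{LemBenjShn}, into $h_2(k_1)=\frac12 h_2(k)$ for the intermediate unramified quadratic extension $k_1$, and Kuroda's formula for the \emph{biquadratic} field $k_1$ gives $h_2(k_1)=\frac12 q(k_1)h_2(2pd)$, from which the stated equivalence drops out immediately. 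This is why the hypothesis is phrased in terms of $q(k_1)$ rather than $q(K_1)$.

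The first gap in your argument is the claim that the hypotheses reduce $\prod_i h_2(k_i)$ to ``an explicit power of $2$ times $h_2(2pd)$.'' They do pin down $h_2(pd)=2$ (vanishing $4$-rank plus $N\varepsilon_{pd}=-1$), but they impose no constraint whatsoever on $h_2(2p)$ and $h_2(2d)$: for $p\equiv 1\pmod 8$ the narrow $2$-class group of $\QQ(\sqrt{2p})$ is cyclic of unbounded order as $p$ varies, so these two factors are not controlled and the reduction fails. The second and more serious gap is the unit index: you assert that $q(k_1)=1$ ``fixes $v_2(q(K_1))$,'' but $q(K_1)$ is the index of the group generated by the units of all seven quadratic subfields, and the square roots that can appear in $K_1$ involve products of units invisible in $k_1$ (e.g.\ $\varepsilon_{2p}$, $\varepsilon_{2d}$, $\varepsilon_{2pd}$); there is no a priori formula expressing $q(K_1)$ through $q(k_1)$. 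Since the lemma is an equivalence, you would need the exact value of $q(K_1)$ (correlated with the uncontrolled factors $h_2(2p)h_2(2d)$) in \emph{every} case, and that computation --- the entire content of the proof on your route --- is deferred rather than carried out. The paper's class-field-theoretic detour is precisely what makes this octic unit computation unnecessary.
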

	\begin{proof}
		Notice that $K_1$ is a quartic unramified extension of $k=\QQ(\sqrt{2pd})$ and  $\2r(A(k))=2$ (cf. \cite[pp. 314-315]{kaplan76}). Thus, by class field  theory, $h(K_1)$ is odd if and only if $h_2(2pd)=4$ and   $k^{(1)}=k^{(2)}$, i.e. the Hilbert $2$-class field tower of $k$ stops at the first layer. On the other hand, we have
		$h_2(k_1)=\frac{1}{4}q(k_1) h_2(2) h_2(pd)h_2({2pd})=\frac{1}{2}q(k)\cdot h_2({k})$ $(cf.$  \cite[Theorem 1]{kuvcera1995parity}$).$
		As $k_1$ is an unramified quadratic extension of $k$, then by  Proposition \ref{LemBenjShn}, $k^{(1)}=k^{(2)}$ if and only if 
		$h_2(k_1)=\frac12 h_2(k)$. Thus, $h_2(K_1)=1$ if and only if $\frac{1}{2}q(k_1)\cdot h_2({2pd})=\frac12 h_2(2pd)$ and $h_2(2pd)=4$, which is equivalent to 
		$q(k_1)=1$ and $h_2(2pd)=4$.
	\end{proof}


	\begin{lemma}\label{castrivialq1q2}
		Let $K=\QQ(\sqrt{p},\sqrt{q_1q_2},\sqrt{2})$,    where $p$ and $q_i$ are three primes such that $p\equiv1 \pmod 4$ and $q_1\equiv q_2 \equiv 7 \pmod 8$ and $\left(\frac{q_2}{ p}\right)=-1$. Then the class number of $K_1$ is even. 
	\end{lemma}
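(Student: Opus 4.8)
The plan is to sidestep a direct computation of $\rg(A(K_1))$ over a quartic base field (which would saddle us with an awkward evaluation of a unit index) and instead to exhibit a quartic subfield of $K_1$ whose $2$-class number is manifestly even, from which evenness is forced upward by the norm map. Here $K_1=\QQ(\sqrt{p},\sqrt{q_1q_2},\sqrt{2})$; set $G=\QQ(\sqrt{q_1q_2},\sqrt{2})$, so that $K_1=G(\sqrt{p})$ is a quadratic extension. First I would prove that $h_2(G)$ is even, and then I would transfer this divisibility to $K_1$.

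To show $h_2(G)$ is even, I would apply the ambiguous class number formula (Lemma~\ref{AmbiguousClassNumberFormula}) to the QO-extension $G/\QQ(\sqrt{2})$; this is legitimate because $h(\QQ(\sqrt{2}))=1$. Since $q_1\equiv q_2\equiv 7\equiv-1\pmod 8$ we have $\left(\frac{2}{q_1}\right)=\left(\frac{2}{q_2}\right)=1$, so each of $q_1,q_2$ splits in $\QQ(\sqrt{2})$. As $q_1q_2\equiv1\pmod 8$, the prime $2$ is unramified in $G/\QQ(\sqrt{2})$, while the four primes of $\QQ(\sqrt{2})$ lying above $q_1$ and $q_2$ all ramify in $G=\QQ(\sqrt{2})(\sqrt{q_1q_2})$; there is no contribution from the infinite places, $G$ being totally real. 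Hence $t_{G/\QQ(\sqrt{2})}=4$. Because $E_{\QQ(\sqrt{2})}=\langle-1,\,1+\sqrt{2}\rangle$ has $\mathbb{F}_2$-dimension $2$ modulo squares and every square of a base element is a norm from $G$, we get $e_{G/\QQ(\sqrt{2})}\le 2$; therefore $\rg(A(G))=4-1-e_{G/\QQ(\sqrt{2})}\ge 1$, so $h_2(G)$ is even.

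It remains to promote this to $K_1$. Since $p$ is an odd prime with $p\nmid 2q_1q_2$ and $p\equiv1\pmod 4$, the quadratic extension $K_1=G(\sqrt{p})$ is ramified precisely at the primes of $G$ above $p$; in particular $K_1$ is not contained in the Hilbert $2$-class field $G^{(1)}$ of $G$, so $K_1\cap G^{(1)}=G$. By class field theory the norm map $N_{K_1/G}\colon A(K_1)\to A(G)$ is then surjective, whence $h_2(G)\mid h_2(K_1)$. As $h_2(G)$ is even, so is $h_2(K_1)$, which is the assertion.

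The one step deserving care is the surjectivity of the norm: it hinges on $K_1/G$ being ramified, which is exactly what $p\nmid 2q_1q_2$ secures, since a ramified quadratic extension cannot sit inside the unramified field $G^{(1)}$. The congruences $q_1\equiv q_2\equiv 7\pmod 8$ enter only through forcing both $q_i$ to split in $\QQ(\sqrt{2})$, hence $t_{G/\QQ(\sqrt{2})}=4$; this is precisely the feature separating the present situation from the companion case $q_2\equiv3\pmod 8$ of the main theorem, where $q_1q_2\equiv5\pmod 8$ and $2$ behaves differently.
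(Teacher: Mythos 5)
Your proof is correct, but it follows a genuinely different route from the paper's. The paper takes the biquadratic subfield $k=\QQ(\sqrt{p},\sqrt{2q_1q_2})$, applies Kuroda's class number formula $h_2(k)=\frac{1}{4}q(k)h_2(p)h_2(2q_1q_2)h_2(2pq_1q_2)$ together with the external inputs $h_2(2q_1q_2)\geq 4$ (Kaplan) and $h_2(2pq_1q_2)\geq 4$ (genus theory) to get $h_2(k)\geq 4$, and then descends to $K_1$ through the \emph{unramified} quadratic extension $K_1/k$, which forces $h_2(K_1)\geq \frac12 h_2(k)\geq 2$. You instead pick the other quartic subfield $G=\QQ(\sqrt{2},\sqrt{q_1q_2})$, prove $\rg(A(G))=3-e_{G/\QQ(\sqrt 2)}\geq 1$ directly from the ambiguous class number formula over $\QQ(\sqrt 2)$ (your count $t_{G/\QQ(\sqrt 2)}=4$ and the bound $e\leq 2$ are both right, and this is where $q_1\equiv q_2\equiv 7\pmod 8$ enters), and then push the divisibility \emph{up} the \emph{ramified} extension $K_1/G$ via surjectivity of the norm on class groups. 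Both arguments are sound; yours has the advantage of being self-contained (no appeal to Kaplan's lower bound for $h_2(2q_1q_2)$ and no class number formula with its unit index), while the paper's is shorter given the cited facts and yields the slightly stronger quantitative conclusion $h_2(K_1)\geq 2$ from $h_2(k)\geq 4$. As you observe, neither argument actually uses the hypothesis $\left(\frac{q_2}{p}\right)=-1$; it is carried along only because of the context in which the lemma is applied.
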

	\begin{proof}
		Let $k=\QQ(\sqrt{p},\sqrt{2q_1q_2})$. As $h_2(p) =1$, $h_2(2q_1q_2)\geq 4$ (cf. \cite[p. 315]{kaplan76}) and  $h_2(2pq_1q_2)\geq 4$ (by genus theory) , then 
		$h_2(k)=\frac{1}{4}q(k) h_2(p) h_2(2q_1q_2)h_2({2pq_1q_2})\geq 4 .$ 
		Thus, the class number of $K_1$ is even, in fact, it is an unramified quadratic extension of $k$.
	\end{proof}	 
	
	\begin{remark}\label{rempq1q1} 	Let $K_1=\QQ(\sqrt{p},\sqrt{q_1q_2},\sqrt{2})$,    where $p$, $q_1$ and $q_3$ are three prime numbers such that $p\equiv1 \pmod 4$, $q_1\equiv  3 \pmod 4$, $q_2 \equiv3\pmod8$ and $\left(\frac{q_2}{ p}\right)=-1$. By \cite[Lemma 3.12]{ChemseddinGreenbergConjectureI}, $h_2(K_1)=1$ 
		if and only if we have one of the following conditions:
		\begin{enumerate}[$\C1:$]
			\item $p\equiv5\pmod8, q_1\equiv3\pmod8 $ and $\left(\frac{q_1}{p}\right)=1$,
			\item $p\equiv5\pmod8, q_1\equiv7\pmod8 $ and $\left(\frac{q_1}{p}\right)=-1$.
		\end{enumerate}
		
	\end{remark}
	
	\begin{remark}\label{rempd} 	Let $K_1=\QQ(\sqrt{p},\sqrt{d},\sqrt{2})$,    where $p\equiv1 \pmod 4$   and $d\equiv 3 \pmod 4$ are two prime numbers such that $\left(\frac{p}{d}\right)=-1$ or $p\equiv5 \pmod 8$. By \cite[Lemma 3.5-$1)$]{ChemseddinGreenbergConjectureI}, $h_2(K_1)=1$ 
		if and only if  $p\equiv5 \pmod 8$.
	\end{remark}

	\begin{remark} Let $K$ be a real biquadratic field of the form $F)$.
		By combining    Lemma   \ref{castrivial1},  Lemma \ref{castrivialq1q2}, Remark \ref{rempq1q1}, Remark   \ref{rempd} and  Lemma \ref{lm fukuda}, we get that $ \rg(A(K_\infty))\leq 2$ and $ \rg(A(K_\infty))=\rg( A(K))$ if and only if $K$ is taking one of the forms in  the items $10)$, $11)$,  $12)$  of the main theorem. 
	\end{remark}
	
	This completes the proof of the first main  theorem.

	\section{\bf Some infinite families of real biquadratic fields  $K$ such that $\rg(A(K))=\rg(A_\infty(K))=3$}\label{Sec.2}	
	As the previous investigations and \cite{ChemseddinGreenbergConjectureI} give  long lists of real biquadratic fields $K$ such that  $\rg(A(K))=\rg(A_\infty(K))$ and $\rg(A_\infty(K))\leq 2$, we show  that the stability of the $2$-rank of the class group in the  cyclotomic $\ZZ_2$-extension    may be valid for larger ranks. In fact, we give some infinite families of real biquadratic fields $K$ such that 
	$\rg(A(K))=\rg(A_\infty(K))=3$. Let us start with the following lemma.
	
	\begin{lemma}\label{lemmaaaa} Let $K=\QQ(\sqrt{q_1q_2},\sqrt{2d})$ or $\QQ(\sqrt{q_1q_2},\sqrt{d})$, where $q_1\equiv7 \pmod 8$,  $q_2\equiv3 \pmod 8$ are two prime numbers  and    $d\equiv 3 \pmod 4$ is a  positive square-free integer  that is not divisible by   $q_1q_2$.Then $\2r(K)=3$ if and only if $d$ takes one of the following forms :

		\begin{enumerate}[$1)$]

			\item Let $d=\delta rs$, where $\delta\in \{1,q_1,q_2\}$, $r$ and $s$ are prime numbers such that $ \left(\frac{q_1q_2}{r}\right)=-\left(\frac{q_1q_2}{s}\right)=-1$, $ s \equiv 1\pmod4 \text{ and }\left(\frac{q_1}{s}\right)=1.$

			\item Let $d=\delta rs$, where $\delta\in \{1,q_1,q_2\}$, $r$ and $s$ are prime numbers such that $ \left(\frac{q_1q_2}{s}\right)=\left(\frac{q_1q_2}{r}\right)=1$ and  one of 
			the following conditions holds  \begin{enumerate}[$\bullet$]
				\item $ r\equiv     1\pmod 4$, $\left(\frac{q_1}{r}\right) =1$ and $\left(\frac{q_1}{s}\right) =-1$.
				\item $ r\equiv     3\pmod 4$ and $\left(\frac{q_1}{r}\right) =\left(\frac{q_1}{s}\right)=1$.
				\item $ r\equiv   s \equiv   1\pmod 4$ and $\left(\frac{q_1}{r}\right) =-1$.
				\item $ r\equiv   s \equiv   3\pmod 4$ and $\left(\frac{q_1}{r}\right)\cdot \left(\frac{q_1}{s}\right) =1$.
				\item $ r\equiv     3\pmod 4$, $ s\equiv 1\pmod 4$ and $\left(\frac{q_1}{s}\right) =1$.
			\end{enumerate}
			
			\item Let $d=\delta rst$, where $\delta\in \{1,q_1,q_2\}$, $r$, $s$ and $t$ are prime numbers such that  $ \left(\frac{q_1q_2}{r}\right)=  \left(\frac{q_1q_2}{s}\right)=-\left(\frac{q_1q_2}{t}\right)=1$ and  one of 
			the following conditions holds \begin{enumerate}[$\bullet$]
				\item $ r\equiv 3\pmod 4$, $\left(\frac{q_1}{s}\right) =-1$ and $\left(\frac{q_1}{r}\right) =1$.
				\item $ r\equiv  3\pmod 4$, $ s\equiv  1\pmod 4$ and  $\left(\frac{q_1}{s}\right) =-1$.
			\end{enumerate}
			\item Let $d=\delta rst$, where $\delta\in \{1,q_1,q_2\}$, $r$, $s$ and $t$ are prime numbers such that   $ \left(\frac{q_1q_2}{r}\right)=-  \left(\frac{q_1q_2}{s}\right)=\left(\frac{q_1q_2}{t}\right)=-1$, and  one of 
			the following conditions holds \begin{enumerate}[$\bullet$]
				\item $ s\equiv  3\pmod 4$.
				\item $\left(\frac{q_1}{s}\right) =-1$.
			\end{enumerate} 
			\item Let $d=\delta rst$, where $\delta\in \{1,q_1,q_2\}$, $r$, $s$ and $t$ are prime numbers such that and  $ \left(\frac{q_1q_2}{r}\right)= \left(\frac{q_1q_2}{s}\right)=\left(\frac{q_1q_2}{t}\right)=-1$.

		\end{enumerate}
		
	\end{lemma}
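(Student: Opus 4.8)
The plan is to apply the Ambiguous Class Number Formula of Lemma \ref{AmbiguousClassNumberFormula} to the quadratic extension $K/F$, where $F=\QQ(\sqrt{q_1q_2})$, following the same scheme as in the proofs of Lemmas \ref{lemmaq1q2Bd=3mod4} and \ref{lemma2qC}. Since $q_1\equiv 7\pmod 8$ and $q_2\equiv 3\pmod 8$ we have $q_1q_2\equiv 5\pmod 8$, so the prime $2$ is inert in $F/\QQ$ and the whole analysis stays in the regime of \cite[Theorem 3.3]{Azmouh2-rank} (in particular the correction of Lemma \ref{corrlemma}, which concerns $q_1q_2\equiv 1\pmod 8$, is not needed here). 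Writing $\rg(A(K))=t_{K/F}-1-e_{K/F}$, with $E_F=\langle -1,\vep_{q_1q_2}\rangle$ modulo squares and hence $e_{K/F}\in\{0,1,2\}$, the condition $\rg(A(K))=3$ becomes $t_{K/F}=4+e_{K/F}$, so that $4\le t_{K/F}\le 6$.

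First I would bound the number of prime factors of $d$. A prime $\ell\nmid 2q_1q_2$ dividing $d$ yields two ramified primes of $K/F$ when $\left(\frac{q_1q_2}{\ell}\right)=1$ (it splits in $F$) and one when $\left(\frac{q_1q_2}{\ell}\right)=-1$ (it is inert), while each of $q_1,q_2$ (when $\delta=q_1$ or $\delta=q_2$) and the prime $\mathfrak 2$ contributes at most one. The inequality $t_{K/F}\le 6$ then forces $d$ to be of the shape $\delta r$, $\delta rs$ or $\delta rst$ with $\delta\in\{1,q_1,q_2\}$ and $r,s,t$ primes prime to $q_1q_2$; a direct count of ramified primes in each configuration of the symbols $\left(\frac{q_1q_2}{r}\right),\left(\frac{q_1q_2}{s}\right),\left(\frac{q_1q_2}{t}\right)$ isolates exactly the configurations with $t_{K/F}\in\{4,5,6\}$, which are those recorded in items $1)$–$5)$.

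Next, for each surviving configuration I would compute $e_{K/F}$, defined by $2^{e_{K/F}}=[E_F:E_F\cap N_{K/F}(K^*)]$, by evaluating the norm–residue symbols $\left(\frac{\vep,\,d}{\mathcal R}\right)$ for $\vep\in\{-1,\vep_{q_1q_2}\}$ at the primes above the divisors of $d$ and at $\mathfrak 2$, as in \cite[Theorem 3.3]{Azmouh2-rank}; these reduce to the values $\left(\frac{q_1}{r}\right),\left(\frac{-1}{r}\right)$ and their analogues, so that the prescribed symbols $\left(\frac{q_1}{r}\right),\left(\frac{q_1}{s}\right)$ together with the residues of $r,s,t$ modulo $4$ are precisely what pin down $e_{K/F}$ in each line. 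Imposing $t_{K/F}-1-e_{K/F}=3$ then produces the stated equivalences. The two cases $K=\QQ(\sqrt{q_1q_2},\sqrt d)$ and $K=\QQ(\sqrt{q_1q_2},\sqrt{2d})$ are run in parallel: the behaviour of $\mathfrak 2$, and hence its contribution both to $t_{K/F}$ and to the symbol entering $e_{K/F}$, differs between them, but in each configuration these two effects cancel in the difference $t_{K/F}-1-e_{K/F}$; verifying this coincidence is what explains why a single list of conditions on $d$ serves for both fields.

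The main obstacle is the bookkeeping rather than any single computation. One must organize a fairly large number of configurations $(\delta;r,s,t)$, each split into sub-cases according to the residues modulo $4$ and the values of $\left(\frac{q_1}{\cdot}\right)$, and evaluate the relevant norm–residue symbols at every finite prime and at $\mathfrak 2$ without double counting. Two symmetries must be handled with care: the relation $q_1\equiv q_2\pmod{(F^*)^2}$, coming from $q_1q_2=(\sqrt{q_1q_2})^2$, which makes $\delta=q_1$ and $\delta=q_2$ define the same field $K$ and accounts for the uniformity of the conclusion in $\delta$; and the permutation symmetry among $r,s,t$, which forces one to normalize the labelling before reading off the conditions. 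Checking that the collected equalities $t_{K/F}-1-e_{K/F}=3$ reduce exactly to the five forms in items $1)$–$5)$, with no configuration omitted or repeated, is the delicate part of the argument, while the individual symbol evaluations are routine given \cite[Theorem 3.3]{Azmouh2-rank}.
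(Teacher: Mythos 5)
Your proposal is correct and follows essentially the same route as the paper: the identity $\rg(A(K))=t_{K/F}-1-e_{K/F}$ for the QO-extension $K/F$ with $F=\QQ(\sqrt{q_1q_2})$, the observation that $q_1q_2\equiv 5\pmod 8$ puts everything in the regime of \cite[Theorem 3.3]{Azmouh2-rank} with a single ramified prime above $2$, the resulting bound $t_{K/F}\le 6$ forcing $d$ to have at most three prime factors besides $\delta$, and the case-by-case evaluation of $e_{K/F}$ via norm residue symbols. The only cosmetic difference is that you discard $d=\delta r$ at the outset from $t_{K/F}\ge 4$, whereas the paper keeps it in the enumeration and verifies it yields rank at most $2$.
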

	\begin{proof}
		
		Assume that $K=\QQ(\sqrt{q_1q_2},\sqrt{2d})$ where   $q_1\equiv7 \pmod 8$,  $q_2\equiv3 \pmod 8$ are two prime numbers  and {\color{red}$d\equiv 3\pmod 4$} is any       positive   square-free integer that is not divisible by   $q_1q_2$.
		Let $t_{K/F}$ be the number of prime ideals of $ F=\QQ(\sqrt{q_1q_2})$ that are ramified in $K$.  Since we have $q_1q_2\equiv5 \pmod 8$ so there is only one ideal of $F$ lying above $2$ that ramified in $K$.
		Thus,  $ \2r(A(K))=t_{K/F}-1-e_{K/F}$. 
		As  $e_{K/F}\in\{0,1,2\}$, then the inequality $\2r(A(K))\leq 3$ implies that $t_{K/F}\leq 6$. Therefore, $d$ takes one of the following five forms
		\begin{eqnarray*}
			&\delta r, &  \\
			& \delta  rs,&  \\
			& \delta  rst&  \text{ whenever }      \left(\frac{q_1q_2}{t}\right)=-1,\\
			&\delta rstu& \text{ whenever }    \left(\frac{q_1q_2}{s}\right)=\left(\frac{q_1q_2}{t}\right)=\left(\frac{q_1q_2}{u}\right)=-1,\\
			&\delta rstuv& \text{ whenever }    \left(\frac{q_1q_2}{r}\right)=\left(\frac{q_1q_2}{s}\right)=\left(\frac{q_1q_2}{t}\right)=\left(\frac{q_1q_2}{u}\right)=\left(\frac{q_1q_2}{v}\right)=-1,
		\end{eqnarray*}
		where $\delta\in \{1,q_1,q_2\}$ and $r$, $s$, $t$, $u$, and   $v$   are prime numbers all different of $q_1$ and $q_2$.
		
		\begin{enumerate}[$\blacktriangleright$]

			\item 	  Let  $d=\delta r$.   If $\left(\frac{q_1q_2}{r}\right)=-1$, then $ \2r(A(K))=2-1-e_{K/F}=1-e=1$  by    \cite[Theorems 3.3]{Azmouh2-rank}. In case $\left(\frac{q_1q_2}{r}\right)=1$ we get $\2r(A(K))=3-1-e_{K/F}=2-e\leq 2.$
			
			\item Let  $d=\delta rs$, we have the following sub-cases :\begin{enumerate}[$\star$]
				\item If $\left(\frac{q_1q_2}{r}\right)=\left(\frac{q_1q_2}{s}\right)=1$, then $ \2r(A(K))=5-1-e_{K/F}=4-e\in \{4,3,2\}$ more precisely by    \cite[Theorems 3.3]{Azmouh2-rank} we have   $ \2r(A(K))=4$ if and only if $r\equiv s \equiv 1 \pmod 4$ and $\left(\frac{q_1}{r}\right)=\left(\frac{q_1}{s}\right)=1$, and $ \2r(A(K))=2$ if and only if [$r\equiv 3 \pmod 4$, $s\equiv 1 \pmod 4$ and $\left(\frac{q_1}{s}\right)=1$ or $r\equiv 3 \pmod 4$, $\left(\frac{q_1}{s}\right)=-1$ and $\left(\frac{q_1}{r}\right)=1$].

				\item If $\left(\frac{q_1q_2}{r}\right)=-\left(\frac{q_1q_2}{s}\right)=-1$, then $ \2r(A(K))=4-1-e_{K/F}\in \{3,2\}$ more precisely by    \cite[Theorems 3.3]{Azmouh2-rank} we have   $ \2r(A(K))=3$ if and only if $ s \equiv 1 \pmod 4$ and $\left(\frac{q_1}{s}\right)=1$.
				
				\item If $\left(\frac{q_1q_2}{r}\right)=\left(\frac{q_1q_2}{s}\right)=-1$, then $ \2r(A(K))=3-1-e_{K/F}=2$ by    \cite[Theorems 3.3]{Azmouh2-rank}.

			\end{enumerate}
			\item Let  $d=\delta rst$  and $\left(\frac{q_1q_2}{t}\right)=-1$, we have the following sub-cases :\begin{enumerate}[$\star$]
				\item If $\left(\frac{q_1q_2}{r}\right)=\left(\frac{q_1q_2}{s}\right)=1$, then $ \2r(A(K))=6-1-e_{K/F}\in \{5,4,3\}$ more precisely by    \cite[Theorems 3.3]{Azmouh2-rank}  we have   $ \2r(A(K))=3$ if and only if [$r\equiv 3 \pmod 4$,  $\left(\frac{q_1}{s}\right)=-1$ and $\left(\frac{q_1}{r}\right)=1$]  or [$r\equiv 3 \pmod 4$, $s\equiv 1 \pmod 4$ and $\left(\frac{q_1}{s}\right)=-1$].

				\item If $\left(\frac{q_1q_2}{r}\right)=-\left(\frac{q_1q_2}{s}\right)=-1$, then $ \2r(A(K))=5-1-e_{K/F}\in \{4,3\}$ more precisely by    \cite[Theorems 3.3]{Azmouh2-rank} we have   $ \2r(A(K))=3$ if and only if $ s \equiv 3 \pmod 4$ or $\left(\frac{q_1}{s}\right)=-1$.
				
				\item If $\left(\frac{q_1q_2}{r}\right)=\left(\frac{q_1q_2}{s}\right)=-1$, then $ \2r(A(K))=4-1-e_{K/F}=3$ by    \cite[Theorems 3.3]{Azmouh2-rank}.

			\end{enumerate} 
			
			 We proceed  similarly the remaining cases and check that    $ \2r(A(K))\geq 4 $.
			
		 	\end{enumerate}
	 	\end{proof}

	So we have the following    proposition which is a deduction of Lemma \ref{lemmaaaa},   Lemma \ref{realTruquad2-rankq1q1}-$2)$ and Fukuda's result.  
	
	
	\begin{proposition} \label{secondmaintheorem}
		Let $K$ be a real biquadratic number field that is in one of the following forms.  
		
		\begin{enumerate}[$1)$]
			\item $K=\QQ(\sqrt{q_1q_2},\sqrt{\delta rs})$ or $K=\QQ(\sqrt{q_1q_2},\sqrt{2\delta rs})$, where  $q_1\equiv 7\pmod 8$ and $q_2\equiv 3\pmod 8$, $r$ and $s$ are prime numbers, $\delta\in\{1,q_1,q_2\}$ is such that 
			$\delta rs\equiv 3\pmod4$,	$ \left(\frac{q_1q_2}{s}\right)=\left(\frac{q_1q_2}{r}\right)=1$ and we have one of the following congruence conditions:

			\begin{enumerate}[$\bullet$]
				\item  $r\equiv s\equiv 3\pmod 8$  and  $\left(\frac{q_1}{r}\right)=\left(\frac{q_1}{s}\right)$,
				\item  $r\equiv s\equiv 5\pmod 8$  and    $\left(\frac{q_1}{r}\right)=-1$,
				\item $r\equiv  5\pmod 8$, $s\equiv  3\pmod 8$, $\left(\frac{q_1}{r}\right)=1$.
			\end{enumerate}

			\item $K=\QQ(\sqrt{q_1q_2},\sqrt{\delta rs})$ or $K=\QQ(\sqrt{q_1q_2},\sqrt{2\delta rs})$, where  $q_1\equiv 7\pmod 8$ and $q_2\equiv 3\pmod 8$, $r$ and $s$ are prime numbers, $\delta\in\{1,q_1,q_2\}$ is such that 
			$\delta rs\equiv 3\pmod4$	$ \left(\frac{q_1q_2}{r}\right)=-\left(\frac{q_1q_2}{s}\right)=-1$ and we have one of the following congruence conditions:

			\begin{enumerate}[$\bullet$]
				\item  $r\equiv s\equiv 5\pmod 8$  and  $\left(\frac{q_1}{s}\right)=-\left(\frac{q_1}{r}\right)=1$,
				\item  $r\equiv 3\pmod 8$, $s\equiv 5\pmod 8$  and    $\left(\frac{q_1}{s}\right)=1$.
			\end{enumerate}
		\end{enumerate}	
		{\bf Then     $ \rg(A(K_\infty))=\rg( A(K))$   and  $\rg(A(K_\infty))=3$.}
	\end{proposition}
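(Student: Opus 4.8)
The plan is to establish the two equalities $\rg(A(K))=3$ and $\rg(A(K_1))=3$ separately, where $K_1=K(\sqrt 2)$ denotes the first layer of the cyclotomic $\ZZ_2$-extension $K_\infty/K$, and then to propagate the rank up the whole tower by Fukuda's criterion. A preliminary remark that streamlines everything is that the two families in each item, $\QQ(\sqrt{q_1q_2},\sqrt{\delta rs})$ and $\QQ(\sqrt{q_1q_2},\sqrt{2\delta rs})$, share the \emph{same} first layer $K_1=\QQ(\sqrt{q_1q_2},\sqrt{\delta rs},\sqrt 2)$, since $\sqrt{2\delta rs}\cdot\sqrt 2=2\sqrt{\delta rs}$. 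Hence the computation of $\rg(A(K_1))$ is carried out once and serves both forms at the same time.

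First I would read off $\rg(A(K))$ from Lemma \ref{lemmaaaa}. In all cases $\delta rs\equiv 3\pmod 4$, so $K$ is of the type treated there, and the hypotheses of the proposition single out exactly the rank-$3$ rows: the three profiles of item $1)$, which have $\left(\frac{q_1q_2}{r}\right)=\left(\frac{q_1q_2}{s}\right)=1$, fall---after reducing the congruences modulo $4$ and, for the third profile, interchanging $r$ and $s$---into the sub-cases of Lemma \ref{lemmaaaa}-$2)$, while the two profiles of item $2)$, which have $\left(\frac{q_1q_2}{r}\right)=-\left(\frac{q_1q_2}{s}\right)=-1$ together with $s\equiv 1\pmod 4$ and $\left(\frac{q_1}{s}\right)=1$, are the sub-case of Lemma \ref{lemmaaaa}-$1)$. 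In every instance this yields $\rg(A(K))=3$.

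Next I would compute $\rg(A(K_1))$ via the ambiguous-class-number formula $\rg(A(K_1))=t_{K_1/F}-1-e_{K_1/F}$ for the QO-extension $K_1/F$ with $F=\QQ(\sqrt{q_1q_2},\sqrt 2)$, exactly as in the proof of Lemma \ref{realTruquad2-rankq1q1}. As this quantity is determined solely by the ramification in $K_1/F$ and by the norm-residue symbols attached to $r,s,q_1,q_2$, it is insensitive to the value of $\rg(A(K))$, and so the rank-$3$ tables of Lemma \ref{realTruquad2-rankq1q1}-$2)$ apply verbatim. Matching the conditions, the three profiles of item $1)$ land among the bullets of the condition $\C3$ in the $\rg(A(K_1))=3$ list, and the two profiles of item $2)$ land among those of $\C2$; hence $\rg(A(K_1))=3$ throughout.

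Finally, with $\rg(A(K))=\rg(A(K_1))=3$ in hand, I would invoke Fukuda's theorem in the form of Lemma \ref{lm fukuda}-$2)$. The only ramified prime in $K_\infty/K$ lies above $2$ and is totally ramified in $K_\infty/K$, so the hypothesis of that lemma holds with $n_0=0$; taking $n=0$ gives $\rg(A(K_n))=3$ for every $n\geq 0$, and therefore $\rg(A(K_\infty))=\rg(A(K))=3$. The real work is the finite but tedious bookkeeping of the two matching steps, in which one checks that each congruence/residue profile lands simultaneously in a rank-$3$ row of Lemma \ref{lemmaaaa} and of Lemma \ref{realTruquad2-rankq1q1}-$2)$, tracking the $r\leftrightarrow s$ symmetry. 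The one conceptual caveat I would flag is that Lemma \ref{realTruquad2-rankq1q1}-$2)$ is stated under the auxiliary assumption $\rg(A(K))=1$; because its rank values come purely from the ambiguous-class-number formula for $K_1/F$, they stay valid in our situation where $\rg(A(K))=3$, and this is precisely what licenses the citation.
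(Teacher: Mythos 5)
Your proposal is correct and follows exactly the route the paper itself indicates: read off $\rg(A(K))=3$ from Lemma \ref{lemmaaaa}, read off $\rg(A(K_1))=3$ from the $\C2$/$\C3$ rows of the rank-$3$ table in Lemma \ref{realTruquad2-rankq1q1}-$2)$ (whose values indeed depend only on the ambiguous-class-number data for $K_1/\QQ(\sqrt{q_1q_2},\sqrt 2)$, not on the auxiliary hypothesis $\rg(A(K))=1$), and conclude by Fukuda's theorem. The paper states the proposition as a bare deduction of these three ingredients, so your write-up, including the case-matching and the caveat you flag, is a faithful filling-in of the same argument.
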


	\section{\bf Investigation  of  Iwasawa module and Greenberg's conjecture}\label{Sec.3}
	
	In this section, we give some families of real biquadratic fields satisfying Greenberg's conjecture and specify the structure of their Iwasawa module. In particular, we determine the complete list of all real biquadratic fields with a trivial Iwasawa module.  We note that the analogue of this problem for real quadratic fields was the subject of the paper
		\cite{mouhib-mova}.

	\subsection{Triviality of Iwasawa module of real biquadratic fields}		
In this subsection we determine the list of all real biquadratic fields with trivial $2$-Iwasawa module.

	\begin{lemma}\label{lemmCaseE}
		Let $K$ be a real biquadratic field   of the form $E)$, that is $K=\QQ(\sqrt{q_1q_2},\sqrt{d})$ or $\QQ(\sqrt{q_1q_2},\sqrt{2d})$, where $q_1\equiv3 \pmod 8$,  $q_2\equiv3 \pmod 8$ are two prime numbers  and    $d\equiv 3 \pmod 4$ is a  positive square-free integer  that is not divisible by   $q_1q_2$.
		Then the class number of $K_1$ is even and $A(K_\infty)$ is {\bf not} trivial.
	\end{lemma}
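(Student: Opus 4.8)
The plan is to split the statement into two independent parts: first show that $h_2(K_1)$ is even, and then deduce from this that the Iwasawa module $A(K_\infty)$ is nontrivial. Note that both shapes in $E)$ have the \emph{same} first layer $K_1=\QQ(\sqrt{q_1q_2},\sqrt{d},\sqrt{2})$, because $\sqrt{2d}=\sqrt{2}\cdot\sqrt{d}$, so it suffices to treat this single octic field. I will use throughout that $q_1\equiv q_2\equiv 3\pmod 8$ forces $q_1q_2\equiv 1\pmod 8$, hence $2$ splits in $\QQ(\sqrt{q_1q_2})$, while any radicand divisible by $q_1$ (or by a prime factor of $d$) has a fundamental unit of norm $+1$.

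To prove $h_2(K_1)$ even, I would introduce the biquadratic subfield $k=\QQ(\sqrt{2},\sqrt{q_1q_2d})\subset K_1$, so that $K_1=k(\sqrt{q_1q_2})$ is quadratic over $k$. First I bound $h_2(k)$ below by Wada's formula (Lemma \ref{wada's f.}): the three quadratic subfields of $k$ are $\QQ(\sqrt{2})$, $\QQ(\sqrt{q_1q_2d})$ and $\QQ(\sqrt{2q_1q_2d})$. Applying the ambiguous class number formula over $\QQ$ (Lemma \ref{AmbiguousClassNumberFormula}) gives $\rg(A(\QQ(\sqrt{q_1q_2d})))=t-2\geq 2$, and likewise for $\QQ(\sqrt{2q_1q_2d})$: indeed each radicand is divisible by the prime $q_1\equiv 3\pmod 4$, so $-1$ is not a norm and $e=1$, while each has at least four ramified primes ($2,q_1,q_2$ and the factors of $d$). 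Hence $h_2(\QQ(\sqrt{q_1q_2d}))\geq 4$ and $h_2(\QQ(\sqrt{2q_1q_2d}))\geq 4$, whereas $h_2(\QQ(\sqrt{2}))=1$ and $q(k)\geq 1$, so Wada yields $h_2(k)\geq \tfrac14\cdot 4\cdot 4=4$. Next I would verify that $K_1/k$ is unramified: since $K_1/\QQ$ is abelian it is enough to compare ramification indices over $\QQ$ prime by prime. At each odd prime dividing $q_1q_2d$ one has $e_{K_1}=e_k=2$, and at $2$ the hypothesis $d\equiv 3\pmod 4$ makes $2$ totally ramified (index $4$) both in $\QQ(\sqrt{2},\sqrt{d})\subset K_1$ and in $k$, while $2$ is unramified in $\QQ(\sqrt{q_1q_2})$; thus $e_{K_1}(2)=e_k(2)=4$. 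Therefore $K_1$ lies in the Hilbert $2$-class field of $k$, the norm $A(K_1)\to A(k)$ has image of index $2$, and $h_2(K_1)\geq \tfrac12 h_2(k)\geq 2$ is even.

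For the nontriviality of $A(K_\infty)$ I would invoke total ramification. Since the three quadratic subfields of $K$ have odd radicands, $\sqrt{2}\notin K$, so $K\cap\QQ_\infty=\QQ$ and the prime(s) above $2$ are totally ramified in $K_\infty/K$ from the ground layer. Consequently the norm maps $A(K_{n+1})\to A(K_n)$ are surjective for all $n\geq 0$ (a standard fact for $\ZZ_2$-extensions with a totally ramified prime, cf. \cite{washington1997introduction}; one could also route this through Lemma \ref{lm fukuda}). An inverse limit of a surjective system of finite groups surjects onto every term, so $A(K_\infty)\twoheadrightarrow A(K_1)$; as $A(K_1)\neq 0$ by the previous step, $\rg(A(K_\infty))\geq \rg(A(K_1))\geq 1$, and $A(K_\infty)$ is nontrivial.

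I expect the main obstacle to be the ramification bookkeeping in the second paragraph, specifically the unramifiedness of $K_1/k$ at the prime $2$: one must check that $2$ ramifies with the \emph{same} index $4$ in $K_1$ and in $k$ rather than splitting differently, which is exactly where the congruences $q_1q_2\equiv 1\pmod 8$ and $d\equiv 3\pmod 4$ are used. A secondary point requiring care is that the lower bound $h_2(k)\geq 4$ (and not merely $\geq 2$) must survive uniformly for every square-free $d\equiv 3\pmod4$, including those whose factorization contains primes $\equiv 1\pmod 4$; the genus-theoretic count above is designed to be robust under this, since it only relies on the number of ramified primes and on a single ramified prime $\equiv 3\pmod 4$.
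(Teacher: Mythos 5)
Your proof is correct, but it takes a genuinely different route from the paper's. The paper stays at the ground level: it views $K$ as a quadratic extension of $F=\QQ(\sqrt{q_1q_2})$, notes that $q_1q_2\equiv 1\pmod 8$ forces \emph{two} ramified primes of $F$ above $2$ in addition to at least one prime dividing $d$ (so $t_{K/F}\geq 3$), and then applies the ambiguous class number formula (Lemma \ref{AmbiguousClassNumberFormula}) together with the unit-index computations of \cite{Azmouh2-rank} and Lemma \ref{corrlemma} --- the key point being that $e_{K/F}=2$ can only occur when $t_{K/F}\geq 4$ --- to conclude $\rg(A(K))\geq 1$; both the parity of $h(K_1)$ and the nontriviality of $A(K_\infty)$ then follow from total ramification of $K_\infty/K$, exactly as in your last paragraph but starting at level $0$. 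You instead work one level up, exhibiting $K_1$ as an unramified quadratic extension of the biquadratic field $k=\QQ(\sqrt{2},\sqrt{q_1q_2d})$ and bounding $h_2(k)\geq 4$ by Kuroda's formula plus genus theory over $\QQ$. Your route is a bit longer but more self-contained: it avoids the norm-residue-symbol computations of $e_{K/F}$ (and in particular does not lean on the corrected statement in Lemma \ref{corrlemma}), at the price of the ramification bookkeeping for $K_1/k$ at $2$, which you carry out correctly (the inertia field of $2$ in $K_1/\QQ$ is exactly $\QQ(\sqrt{q_1q_2})$, so the index is $4$ on both sides). One cosmetic slip: for the form $K=\QQ(\sqrt{q_1q_2},\sqrt{2d})$ the three quadratic subfields do \emph{not} all have odd radicands; what you actually need, and what is true in both cases, is that none of them is $\QQ(\sqrt{2})$, so $\sqrt{2}\notin K$ and the primes above $2$ are totally ramified in $K_\infty/K$ from the ground layer.
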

	\begin{proof} Put $F=\QQ(\sqrt{q_1q_2})$.
		Notice that $K/ F$ is a	QO-extension.  Since we have two prime of $F$ above $2$, we have $\2r(K)=t_{K/F}-1-e_{K/F}\geq 3-1-e_{K/F}=2-e_{K/F}$. Notice that according to \cite[Theorem 3.4]{Azmouh2-rank} and Lemma \ref{corrlemma}, $e_{K/F}=2$ if and only if $t_{K/F}\geq 4$ (i.e. there is a prime divisor of $d$ that decomposes into two primes in $F$).
		Thus, $\2r(K) \geq 1$. Since $K_\infty /K$ is totally ramified, then  $A(K_\infty)$ is  not  trivial.
	\end{proof}
	
	\medskip
	Now we can state and prove the second main theorem of this paper.
	
	\begin{theorem}[{\bf The Second Main Theorem}]
		Let $K $ be any real biquadratic number field. Then $A(K_\infty)$ is trivial if and only if $K$ is in one of the following forms:
		\begin{enumerate}[$1)$]
			\item  $K=\QQ(\sqrt{\eta_1 q},\sqrt{\eta_2 r})$,     where $\eta_1, \eta_2\in\{1,2\}$,  $q\equiv 3\pmod 4$ and $r$ are two prime numbers  such that we have one of the following   conditions:
			\begin{enumerate}[$\C1:$]
				\item   $r\equiv 3$ or $5\pmod 8$,
				
				\item   $r\equiv 7\pmod 8$ and  $q\equiv 3\pmod 8$.
			\end{enumerate}

				%

			\item  Let $K=\QQ(\sqrt{\eta_1q_1q_2},\sqrt{\eta_2\delta r})$, where  $\eta_1, \eta_2\in\{1,2\}$,  $q_1\equiv3 \pmod 4$,  $q_2\equiv3 \pmod 8$ and $r$ are prime numbers, and   $\delta\in  \{1,q_1,q_2\}$ with $\delta r\equiv 1\pmod 4$ and such that we have one of the following conditions:
			\begin{enumerate}[$\C1:$]
				\item $r\equiv 3\pmod 8$,
				\item $r\equiv 5\pmod 8$, $ q_1\equiv 3\pmod 8$ and  $\left(\frac{q_1q_2}{ r}\right)=-1$,
				\item $r\equiv 5\pmod 8$, $ q_1\equiv 7\pmod 8$ and  $\left(\frac{q_1}{ r}\right)=-1$,

				\item  $r\equiv  7\pmod 8$,   $q_1\equiv 3\pmod 8$.
				
			\end{enumerate}

			\item 	$K=\QQ(\sqrt{p_1},\sqrt{p_2})$, where $p_1\equiv p_2 \equiv 1 \pmod4$ are two prime numbers satisfying the following conditions:
			\begin{enumerate}[$a)$]
				\item $\left(\frac{p_1}{p_2}\right)=-1$ or $[\left(\frac{p_1}{p_2}\right)=1$ and $\left(\frac{p_1}{p_2}\right)_4\not=\left(\frac{p_2}{p_1}\right)_4]$,
				\item     
				$h_2(2p_1p_2)=4$ and
				\item  $q(k_1)=1$, where $k_1=\QQ(\sqrt{p_1p_2}, \sqrt{2})$.
			\end{enumerate}

		\end{enumerate}
	\end{theorem}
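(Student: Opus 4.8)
The plan is to reduce the triviality of the whole Iwasawa module to a single class-number computation at the first layer $K_1=K(\sqrt2)=\QQ(\sqrt{d_1},\sqrt{d_2},\sqrt2)$, and then to determine, over all real biquadratic fields, when that computation yields $1$. The key point is that the cyclotomic $\ZZ_2$-extension $K_\infty/K$ is unramified outside $2$ and, since $\QQ_\infty/\QQ$ is totally ramified at $2$, every prime of $K$ above $2$ is totally ramified in $K_\infty/K$; thus one may take $n_0=0$ in Lemma \ref{lm fukuda}. In particular the norm maps $A(K_{n+1})\to A(K_n)$ are surjective, so $A(K_\infty)$ surjects onto each $A(K_n)$. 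Hence $A(K_\infty)=0$ forces $h_2(K_1)=1$. Conversely, if $h_2(K_1)=1$ then $A(K)$, being a quotient of $A(K_1)$, is also trivial, so $h_2(K_0)=h_2(K_1)=1$; Lemma \ref{lm fukuda}$(1)$ applied at $n=0$ then gives $h_2(K_n)=1$ for all $n$, whence $A(K_\infty)=0$. Therefore $A(K_\infty)$ is trivial if and only if $h_2(K_1)=1$, and it suffices to classify the real biquadratic fields with $h_2(K_1)=1$. Note that $h_2(K)=1$ alone is \emph{not} sufficient, since from $h_2(K_0)=1$ we cannot bootstrap the equality $h_2(K_0)=h_2(K_1)$ needed for Fukuda.

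First I would cut down the list of candidate fields. Since $A(K)$ is a quotient of $A(K_1)$, the condition $h_2(K_1)=1$ forces $h_2(K)=1$; by Wada's formula (Lemma \ref{wada's f.}) this in turn forces each of the three quadratic subfields of $K$ to have odd class number. Running through the real quadratic fields with trivial $2$-class group, the admissible generators $d_1$, $d_2$ (and $d_1d_2$) can only be, up to a factor $2$ and squares, a single prime or a product $q_1q_2$ of two primes congruent to $3\bmod4$. Enumerating the compatible triples of subfields reproduces exactly the shapes $A)$–$F)$ recalled in the introduction, and groups the surviving candidates into: the one-prime families $\QQ(\sqrt{\eta_1q},\sqrt{\eta_2r})$, the $q_1q_2$-families $\QQ(\sqrt{\eta_1q_1q_2},\sqrt{\eta_2\delta r})$, and the two-prime family $\QQ(\sqrt{p_1},\sqrt{p_2})$ with $p_i\equiv1\bmod4$. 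It then remains to decide, within each family, exactly when the octic field $K_1$ has odd class number.

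Next I would carry out these computations family by family, and most are already available among the results collected in Section \ref{Sec.1}. For $\QQ(\sqrt{p},\sqrt{d})$ with $d\equiv3\bmod4$ prime, Remark \ref{rempd} gives $h_2(K_1)=1$ if and only if $p\equiv5\bmod8$; for $\QQ(\sqrt{p},\sqrt{q_1q_2})$, Remark \ref{rempq1q1} provides the conditions $\C1,\C2$ of item $2)$; and for the genuinely conditional two-prime case $\QQ(\sqrt{p_1},\sqrt{p_2})$, Lemma \ref{castrivial1} yields $h_2(K_1)=1$ if and only if $h_2(2p_1p_2)=4$ and $q(k_1)=1$, which is item $3)$. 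The remaining families are handled by direct application of Wada's formula (Lemma \ref{wada's f.}) to $K_1$, the explicit fundamental system of units produced by Wada's method, and the ambiguous class number formula (Lemma \ref{AmbiguousClassNumberFormula}) together with norm-residue-symbol evaluations. Two families must be ruled out: Lemma \ref{lemmCaseE} shows that form $E)$ (with $q_1\equiv q_2\equiv3\bmod8$) always has even $h_2(K_1)$, hence nontrivial $A(K_\infty)$, and Lemma \ref{castrivialq1q2} eliminates the subcase $q_1\equiv q_2\equiv7\bmod8$ with $\left(\frac{q_2}{p}\right)=-1$. Assembling the surviving congruence and Legendre-symbol conditions yields precisely the three forms of the statement, and conversely each listed field satisfies $h_2(K_1)=1$.

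The main obstacle is the exact determination of the parity of $h_2(K_1)$ in the octic field $K_1$: distinguishing $h_2(K_1)=1$ from an even value rests on computing the unit index $q(K_1)$ via a correct fundamental system of units and on evaluating Hilbert norm-residue symbols, where a single misplaced sign flips the conclusion (indeed Lemma \ref{corrlemma} exists to repair exactly such a slip). The case $\QQ(\sqrt{p_1},\sqrt{p_2})$ is the subtlest, because there $h_2(K)=1$ does not force $h_2(K_1)=1$: here $K_1$ is an unramified quartic extension of $k=\QQ(\sqrt{2p_1p_2})$ with $\rg(A(k))=2$, so by class field theory $h_2(K_1)=1$ is equivalent to the $2$-class field tower of $k$ terminating at $k^{(1)}$, and converting this into the checkable conditions $h_2(2p_1p_2)=4$ and $q(k_1)=1$ requires Proposition \ref{LemBenjShn} together with the parity formula for $h_2(k_1)$. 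Finally, care is needed to ensure the enumeration in the second step is exhaustive, so that no real biquadratic field with trivial Iwasawa module is overlooked.
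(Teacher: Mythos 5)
Your reduction of the problem to the single condition $h_2(K_1)=1$ aims at the right target, and the family-by-family verifications you list are essentially the ingredients the paper uses (Lemma \ref{castrivial1}, Lemma \ref{castrivialq1q2}, Remarks \ref{rempq1q1} and \ref{rempd}, Lemma \ref{lemmCaseE}). But the justification of that reduction has a genuine gap: it is \emph{not} true that every prime of $K$ above $2$ is totally ramified in $K_\infty/K$, so you cannot take $n_0=0$ in Lemma \ref{lm fukuda} uniformly. Whenever $K=\QQ(\sqrt{d_1},\sqrt{2d_2})$ with $d_2\equiv1\pmod 4$ or $d_1\equiv 3\pmod 4$ (or $K=\QQ(\sqrt{2d_1},\sqrt{2d_2})$), the first layer $K_1=K(\sqrt2)$ is an \emph{unramified} quadratic extension of $K$, and such fields occur throughout items $1)$ and $2)$ of the statement (take $\eta_2=2$, e.g.\ $\QQ(\sqrt{q},\sqrt{2r})$ with $q\equiv 3\pmod 4$). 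For these fields three of your steps fail simultaneously: $A(K)$ is not a quotient of $A(K_1)$ (by class field theory the norm map of an unramified quadratic extension has image of index $2$), so $h_2(K_1)=1$ does not force $h_2(K)=1$ --- on the contrary, the existence of the unramified extension $K_1/K$ forces $2\mid h_2(K)$; consequently the narrowing step based on applying Lemma \ref{wada's f.} to $K$ under the assumption $h_2(K)=1$ is unsound and would wrongly discard fields that belong in the final list; and Fukuda's lemma cannot be invoked at $n=0$ because its hypothesis on $n_0$ is violated.

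The paper's proof is organized precisely around this dichotomy: when $K_1/K$ is unramified it replaces $K$ by $K'=\QQ(\sqrt{d_1},\sqrt{d_2})$, observes that $K'_1=K_1$ (hence $K'_\infty=K_\infty$) while $K'_1/K'$ \emph{is} ramified, and only then runs the argument you describe --- now legitimately, since $A(K')$ is a quotient of $A(K'_1)$ and Fukuda applies with $n_0=0$ over $K'$ --- reading off the resulting list from the First Main Theorem, Lemma \ref{lemmCaseE} and the main theorem of \cite{ChemseddinGreenbergConjectureI} rather than recomputing $h_2(K_1)$ from scratch. Your criterion ``$A(K_\infty)$ trivial if and only if $h_2(K_1)=1$'' is in fact correct, but only after this detour through $K'$; as written, the unramified case, which accounts for a substantial part of the theorem's list, is not covered by your argument.
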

	\begin{proof}Notice that for any real biquadratic field $K$, we have $K_\infty/K$ is totally ramified. So
		we need to distinguish the following two  cases:
		\begin{enumerate}[$1)$]
			\item  Assume that $K_1/K$ is unramified.  In this case,  $A(K_\infty)$ is trivial, implies that $A(K_1)$ is trivial.
			The fact that $K_1/K$ is unramified, implies that  $K$ is in one of the following forms:
			\begin{enumerate}[$\bullet$]
				\item
				$K=\QQ(\sqrt{d_1},\sqrt{2d_2})$ with $d_2\equiv 1\pmod 4$,
				\item  $K=\QQ(\sqrt{d_1},\sqrt{2d_2})$ with $d_1\equiv 3\pmod 4$ or 
				\item $K=\QQ(\sqrt{2d_1},\sqrt{2d_2})$ with $d_1\equiv 1\pmod 4$ and $d_2\equiv 3$ or $1\pmod 4$.
			\end{enumerate}
			Let $K'= \QQ(\sqrt{d_1},\sqrt{d_2})$ with $d_1$ and $d_2$ satisfy one of the above conditions. Notice that we have $K_\infty=K'_\infty$ (in fact, $K_1=K'_1)$ and $K_1/K'$ is ramified. Thus, $A(K)$ is trivial, implies that $K_1/K$ is an extension of $\mathrm{QO}$-fields. Therefore, it suffices to   list the form of  $K'$ from our first main theorem, Lemma \ref{lemmCaseE} and 
			\cite[The main Theorem]{ChemseddinGreenbergConjectureI}.
			\item  Assume that  $K_1/K$ is ramified. Here  we have two  subcases:
			\begin{enumerate}[$a)$]
				\item Assume that $K$ takes the form $E)$. In this case,  $A(K_\infty)$ is not trivial (cf. Lemma \ref{lemmCaseE}).
				
				\item    $K$  is not of the form $E)$. 	In this case, notice that if $A(K_\infty)$ is trivial, then $A(K)$ is trivial. Thus $K_1/K$ is an extension of $\mathrm{QO}$-fields. Therefore, the list of $K$ is given by  
				our first main theorem and 
				\cite[The main Theorem]{ChemseddinGreenbergConjectureI}.
			\end{enumerate}
			
		\end{enumerate}
		Plugging these investigations to our first main theorem and 
		\cite[The main Theorem]{ChemseddinGreenbergConjectureI}, we complete the proof.
	\end{proof}

	\subsection{Greenbergs conjecture over the fields $K_\nu=\mathbb{Q}(\sqrt{\nu q},\sqrt{rs})$, with  $\nu\in\{1,2\}$}		
	
	Let  $q\equiv    3\pmod 4$  and $r\equiv s\equiv     3\pmod 8$   be three distinct prime numbers such that $\left(\frac{q}{s}\right)=\left(\frac{q}{r}\right)=(-1)^{\delta_{\nu, 2}}$.
	Consider the fields	$K_\nu=\mathbb{Q}(\sqrt{\nu q},\sqrt{rs})$ and $F_\nu=\mathbb{Q}(\sqrt{\nu qrs})$,  with  $\nu\in\{1,2\}$. Let $a$   and $b$  be the integers such that
	$ \varepsilon_{\nu qrs}=a+b\sqrt{\nu  qrs}$.  	 Notice that 
	$K_\nu/F_\nu$ is an unramified quadratic extension. Moreover, we have $\rg(A(K_\nu))=\rg(A_\infty(K_{\nu}))=2$ (cf. \cite[Items $5)$ and $14)$ of Theorem 1.4]{ChemseddinGreenbergConjectureI}) and 
	$\rg(A(F_\nu))=\rg(A_\infty(F_{\nu}))=2$   (in fact according to 	\cite[p. 164]{BenjShn(22)}, we have $\rg(A(F_{\nu}))=2$). 
	Let $K_{\nu,n}$ and $F_{\nu,n}$ denote the $n$th layer of  cyclotomic $Z_2$-extension of $K_{\nu}$ and $F_{\nu}$ respectively. 
	In this subsection we aim to improve and extend   \cite[Theorem 4.11 and Corollary 4.12]{ChemseddinGreenbergConjectureI}. 
	For some works in this direction we refer the reader to 	\cite{AziziRezzouguiZekhniniPeriodica,AziziRezzouguiZekhniniDebrecen,CE-EM_2025}.

	\bigskip

	\begin{lemma} \label{refd1}
		Let $k$ be a number field such that $\rg(A(k))=2$. Assume that the Hilbert $2$-class field of $k$ stops at the first layer.
		If moreover  $k$ has an unramified quadratic extension with cyclic $2$-class group, then the $4$-$\rg$ of $A(k)$ equals $1$. 
	\end{lemma}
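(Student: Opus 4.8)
The plan is to convert the statement into a purely group-theoretic one about $G_k:=\mathrm{Gal}(\mathcal{L}(k)/k)$ and then read off the $4$-rank from the isomorphism type of $A(k)$. Since $\rg(A(k))=2$, I write the $2$-part of the class group as $A(k)\cong\ZZ/2^a\ZZ\times\ZZ/2^b\ZZ$ with $1\le a\le b$; by definition the $4$-rank is the number of these two exponents that are $\ge 2$, so it lies in $\{0,1,2\}$, and proving it equals $1$ amounts to showing $a=1$ with $b\ge 2$.

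First I would exploit the hypothesis that the tower stops at the first layer: this says $\mathcal{L}(k)=k^{(1)}$ and that $G_k\cong A(k)$ is abelian. Let $K'$ be the unramified quadratic extension of $k$ with cyclic $2$-class group, and let $H=\mathrm{Gal}(k^{(1)}/K')$ be the corresponding index-$2$ subgroup of $G_k$. The \emph{decisive step} is to identify $A(K')$ with $H$. For this I note that $K'^{(1)}/K'$ is unramified and $K'/k$ is unramified, hence $K'^{(1)}/k$ is a finite unramified $2$-extension; since $\mathcal{L}(k)=k^{(1)}$ is the maximal unramified (pro-)$2$-extension of $k$, we get $K'^{(1)}\subseteq k^{(1)}$. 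Conversely $k^{(1)}/K'$ is unramified and abelian (as $H\le G_k$ is abelian), so $k^{(1)}\subseteq K'^{(1)}$. Therefore $K'^{(1)}=k^{(1)}$ and, by class field theory, $A(K')\cong\mathrm{Gal}(k^{(1)}/K')=H$ as a subgroup of $A(k)$. As a consistency check, Proposition \ref{LemBenjShn} already forces $h_2(K')=2^{a+b-1}=|H|$. I expect this identification to be the main obstacle, since it is precisely where the hypothesis ``the tower stops at $k^{(1)}$'' is genuinely used.

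With the identification in hand the rest is elementary. By hypothesis $A(K')\cong H$ is cyclic of order $2^{a+b-1}$, so $A(k)$ contains an element of order $2^{a+b-1}$. But the exponent of $\ZZ/2^a\ZZ\times\ZZ/2^b\ZZ$ is $2^b$, whence $2^{a+b-1}\le 2^b$, i.e. $a\le 1$, so $a=1$. (Equivalently: if $a\ge 2$ then every index-$2$ subgroup of $\ZZ/2^a\ZZ\times\ZZ/2^b\ZZ$ is non-cyclic, so no unramified quadratic extension could have cyclic $2$-class group, contradicting the hypothesis.) Consequently $A(k)\cong\ZZ/2\ZZ\times\ZZ/2^b\ZZ$ and the $4$-rank is at most $1$; in the settings where the lemma is invoked $A(k)$ is non-elementary, so $b\ge 2$ and the $4$-rank equals $1$. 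Once $A(k)\cong\ZZ/2\ZZ\times\ZZ/2^b\ZZ$ with $b\ge 2$ is established, one may alternatively close the argument through Theorem \ref{AabounePrzekhini}, whose condition $(4)$ (a cyclic $2$-class group for some $K_{i,2}$) is exactly our hypothesis and whose equivalence with condition $(1)$ ($G_k$ abelian) repackages the same conclusion.
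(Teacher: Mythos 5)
Your argument is correct and is essentially the contrapositive of the paper's own proof: the paper assumes the $4$-rank is $2$ and observes that then all three index-$2$ subgroups of the abelian group $G_k\cong A(k)$ have rank $2$, so no unramified quadratic extension can have cyclic $2$-class group, while you argue directly that a cyclic index-$2$ subgroup of $\ZZ/2^a\ZZ\times\ZZ/2^b\ZZ$ forces $a=1$; both hinge on the same identification $A(K')\cong H$ via $K'^{(1)}=k^{(1)}$, which you spell out more explicitly than the paper does. Your closing remark about the elementary case $A(k)\cong(\ZZ/2\ZZ)^2$ (where the $4$-rank would be $0$, not $1$) flags a small imprecision that the paper's proof silently shares, but this does not affect the applications.
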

	\begin{proof}Let $G_{k}$ denotes the Galois group of the maximal unramified $2$-extension  $\mathcal{L}(k)$ of $k$ over $k$.
		As the Hilbert $2$-class field tower of $k$ stops at the first layer,  
		$G_{k}$ is abelian.
		Assume that $4$-$\mathrm{rank}(A(k))=2$. 
		Thus, there exist  two parameters $a$ and $b$  (cf. \cite[Lemma 1]{Ben17})  such that  $G_{k} =\langle a,b \rangle$ and such that the three subgroups of $G_{k}$ of index $2$ are 
		\begin{center}
			$H_{1}=\langle a, b^2, G_{k}'\rangle=\langle a, b^2 \rangle$,
			
			$H_{2}=\langle ab, b^2, G_{k}'\rangle=\langle ab, b^2 \rangle$ \quad  and \quad	$H_{3}=\langle a^2, b, G_{k}'\rangle=\langle a^2, b  \rangle.$
		\end{center}
		Thus all the three unramified quadratic extensions of $k$ have $2$-class groups of rank $2$, which is a contradiction. 
	\end{proof}

	\begin{lemma} \label{refd2}Let $n\geq 1$.
		The field  $F_{\nu,n}  $  has an unramified quadratic extension with cyclic $2$-class group if and only if  $q\equiv    7\pmod 8$.
	\end{lemma}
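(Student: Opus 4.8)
The plan is to work at a fixed layer $n\ge 1$ and to exploit that $\sqrt2\in\mathbb{Q}_{2,1}\subseteq\mathbb{Q}_{2,n}$, so that $F_{\nu,n}=\mathbb{Q}_{2,n}(\sqrt{qrs})$ and $K_{\nu,n}=\mathbb{Q}_{2,n}(\sqrt q,\sqrt{rs})$ independently of $\nu$. Since $\rg(A(F_\nu))=\rg(A_\infty(F_\nu))=2$ and the only primes ramifying in $F_{\nu,\infty}/F_\nu$ lie above $2$ and are totally ramified from the first layer, Lemma~\ref{lm fukuda} gives $\rg(A(F_{\nu,n}))=2$ for all $n\ge1$; hence $F_{\nu,n}$ has exactly three unramified quadratic extensions. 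First I would identify them: over $\mathbb{Q}$ the three unramified quadratic extensions of $F_\nu$ are $K_\nu=\mathbb{Q}(\sqrt q,\sqrt{rs})$, $N_1=\mathbb{Q}(\sqrt r,\sqrt{qs})$ and $N_2=\mathbb{Q}(\sqrt s,\sqrt{qr})$, corresponding to the three factorizations of the discriminant $(-4)(-q)(-r)(-s)$ into two positive fundamental discriminants. Because base change preserves unramifiedness at all finite and infinite places, their composita with $\mathbb{Q}_{2,n}$, namely $K_{\nu,n}$, $N_{1,n}=\mathbb{Q}_{2,n}(\sqrt r,\sqrt{qs})$ and $N_{2,n}=\mathbb{Q}_{2,n}(\sqrt s,\sqrt{qr})$, are precisely the three unramified quadratic extensions of $F_{\nu,n}$.

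Next I would dispose of $K_{\nu,n}$. By hypothesis $\rg(A(K_\nu))=\rg(A_\infty(K_\nu))=2$, so Lemma~\ref{lm fukuda} again yields $\rg(A(K_{\nu,n}))=2$ for all $n\ge1$, and the $2$-class group of $K_{\nu,n}$ is never cyclic. Consequently $F_{\nu,n}$ admits an unramified quadratic extension with cyclic $2$-class group if and only if one of $N_{1,n}$, $N_{2,n}$ has $2$-class group of rank $\le1$. Thus the statement reduces to computing $\rg(A(N_{1,n}))$ and $\rg(A(N_{2,n}))$ and proving that at least one of them equals $1$ exactly when $q\equiv7\pmod8$.

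Here the dichotomy enters through the factor $qs$ (resp. $qr$): if $q\equiv7\pmod8$ then $qs\equiv qr\equiv5\pmod8$, so $2$ is inert in $\mathbb{Q}(\sqrt{qs})$ (one prime above $2$), whereas if $q\equiv3\pmod8$ then $qs\equiv qr\equiv1\pmod8$, so $2$ splits (two primes above $2$). In the first case $N_{1,1}=\mathbb{Q}(\sqrt{qs},\sqrt r,\sqrt2)$ is exactly of the shape treated in Lemma~\ref{realTruquad2-rankq1q1}, with $q\equiv7$ and $s\equiv3\pmod8$ playing the roles of $q_1,q_2$ and $d=r\equiv3\pmod8$; its item $1)$ (first bullet, $r\equiv3\pmod8$) gives $\rg(A(N_{1,1}))=1$, and Lemma~\ref{lm fukuda} propagates this to all $n\ge1$, producing the desired cyclic extension. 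In the second case both $N_{1,1}$ and $N_{2,1}$ fall into the regime of form $E)$ (two primes $\equiv3\pmod8$ in the quadratic part), where the extra prime above $2$ supplies an additional ambiguous class: I would run the ambiguous class number formula (Lemma~\ref{AmbiguousClassNumberFormula}) over the base $\mathbb{Q}_{2,n}(\sqrt r)$ (resp. $\mathbb{Q}_{2,n}(\sqrt s)$), whose $2$-class number stays odd up the tower, and evaluate the unit index through the norm-residue symbols at the prime above $2$ exactly as in Lemma~\ref{corrlemma}, obtaining $\rg(A(N_{1,n}))\ge2$ and $\rg(A(N_{2,n}))\ge2$; so no cyclic extension exists.

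The main obstacle will be this last point, the lower bound $\rg\ge2$ in the case $q\equiv3\pmod8$: Lemma~\ref{lemmCaseE} only guarantees rank $\ge1$, so I would have to carry out the ambiguous class number computation carefully and uniformly in $n$, controlling the decomposition of $q$ (and of $s$, resp. $r$) in $\mathbb{Q}_{2,n}(\sqrt r)$ (resp. $\mathbb{Q}_{2,n}(\sqrt s)$) for every layer, confirming that the base field keeps odd $2$-class number throughout the $\mathbb{Z}_2$-tower, and pinning down the unit index $e$ so that the additional split prime above $2$ forces the rank strictly above $1$. The delicate calibration is to show that the passage from $q\equiv3$ to $q\equiv7\pmod8$ moves the rank by exactly one, landing on $1$ rather than $0$ or $\ge2$, which is where the standing hypotheses $r\equiv s\equiv3\pmod8$ and $\left(\frac{q}{r}\right)=\left(\frac{q}{s}\right)=(-1)^{\delta_{\nu,2}}$ are essential, together with Lemma~\ref{lm fukuda} to guarantee stability from the first layer.
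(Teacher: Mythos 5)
Your skeleton is the same as the paper's: identify the three unramified quadratic extensions of $F_{\nu,n}$ as the $n$th layers of $K_\nu=\QQ(\sqrt{\nu q},\sqrt{rs})$, $K_\nu'=\QQ(\sqrt{\nu r},\sqrt{qs})$ and $K_\nu''=\QQ(\sqrt{\nu s},\sqrt{qr})$, note that $A(K_{\nu,n})$ always has rank $2$, and reduce to deciding when the other two have cyclic $2$-class group. The difference is in how that last step is settled. The paper simply invokes \cite[Theorem 1.4]{ChemseddinGreenbergConjectureI}, which already asserts that for $q\equiv 3\pmod 8$ both $A(K'_{\nu,n})$ and $A(K''_{\nu,n})$ have rank $2$ for all $n$, while for $q\equiv 7\pmod 8$ both are cyclic. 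You instead try to rederive this from the lemmas internal to the present paper, and this is where your argument is incomplete.

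Concretely, two gaps. First, and most seriously, in the case $q\equiv 3\pmod 8$ you need $\rg(A(K'_{\nu,n}))\geq 2$ and $\rg(A(K''_{\nu,n}))\geq 2$ for every $n\geq 1$, and you yourself concede that Lemma~\ref{lemmCaseE} only yields rank $\geq 1$; the ambiguous-class-number computation over $\QQ_{2,n}(\sqrt r)$ that you propose (controlling $t$ and $e$ uniformly in $n$, and checking the base keeps odd $2$-class number up the tower) is exactly the missing content, and it is not carried out — it is precisely what the cited Theorem 1.4 of the companion paper supplies. As written, the ``only if'' direction of the lemma is therefore not proved. Second, in the case $q\equiv 7\pmod 8$ you compute $\rg(A(N_{1,1}))=1$ from Lemma~\ref{realTruquad2-rankq1q1} and then say Fukuda ``propagates this to all $n\geq 1$''; but Lemma~\ref{lm fukuda} requires equality of ranks at \emph{two consecutive} layers $n,n+1$ with $n\geq n_0$ before stabilization follows, so a single layer-$1$ computation does not propagate. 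You would additionally need $\rg(A(N_1))=1$ at layer $0$ (available from Lemma~\ref{lemmaq1q2Bd=3mod4} or Lemma~\ref{lemma2qC} under the standing hypotheses, including $\left(\frac{s}{r}\right)=1$) together with the total ramification hypothesis from the ground field, or a layer-$2$ computation. Both defects are repairable, but the first one is the substantive mathematical content of the lemma and cannot be waved through.
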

	\begin{proof}
		Notice that the unramified quadratic extensions of $F_{\nu,n}$ are $K_{\nu,n}$, $K_{\nu,n}' $ and $K_{\nu,n}''$ the  $n$th layer of the cyclotomic extension of 
		$K_\nu=\mathbb{Q}(\sqrt{\nu q},\sqrt{rs})$, $K_\nu'=\mathbb{Q}(\sqrt{\nu r},\sqrt{qs})$ and $K_\nu''=\mathbb{Q}(\sqrt{\nu s},\sqrt{qr})$ respectively. By \cite[Theorem 1.4]{ChemseddinGreenbergConjectureI} if $q\equiv 3\pmod 8$  we have
		$\rg(A(K_{\nu,n}'))=\rg(A(K_{\nu,n}''))=2$. Moreover, if $q\equiv 7\pmod 8$ then
		the $2$-class groups  of the extensions  $K_{\nu,n}'$ 
		and $K_{\nu,n}''$  are cyclic. So the lemma.
	\end{proof}
	\begin{lemma}\label{lemgen}
		Let $q\equiv    7\pmod 8$  and $r\equiv s\equiv     3\pmod 8$  be   three distinct prime numbers such that  $\left(\frac{q}{s}\right)=\left(\frac{q}{r}\right)=(-1)^{\delta_{\nu, 2}}$ 
		and $\left(\frac{s}{r}\right)=1$. Let $\rho \in\{1,2\}$ with $\rho\not= \nu$.
		\begin{enumerate}[\rm1)]
			\item Let $a$   and $b$  be the integers such that
			$ \varepsilon_{\nu qrs}=a+b\sqrt{\nu  qrs}$. Then
			out of the three integers   
			$2^{\delta_{\nu, 1}}r(a+(-1)^{\delta_{\nu, 1}})$, $2^{\delta_{\nu, 2}}q(a-1)$ or $2^{\delta_{\nu, 2}}s(a+(-1)^{\delta_{\nu, 1}})$, exactly one of them is   a square in $\NN $. Furthermore, we have:
			\begin{enumerate}[$ a)$]
				\item  If $2^{\delta_{\nu, 1}}r(a+(-1)^{\delta_{\nu, 1}})$ is a square in $\NN $, then
				$$\sqrt{\nu \varepsilon_{\nu qrs}}= b_1\sqrt{r} +b_2\sqrt{\nu qs}  \quad \text{ and } \quad  2^{\delta_{\nu, 2}}=(-1)^{\delta_{\nu, 1}}rb_1^2+(-1)^{\delta_{\nu, 2}}\nu qsb_2^2.$$

				\item  If $2^{\delta_{\nu, 2}}q(a-1)$ is a square in $\NN $, then  
				$$\sqrt{2\varepsilon_{\nu qrs}}= b_1\sqrt{\nu q } +b_2\sqrt{rs}  \quad \text{ and } \quad  2=-2^{\delta_{\nu, 2}}qb_1^2+ rsb_2^2.$$
				
				\item If $2^{\delta_{\nu, 2}}s(a+(-1)^{\delta_{\nu, 1}})$ is a square in $\NN $, then 
				$$\sqrt{2\varepsilon_{\nu qrs}}= b_1\sqrt{\nu s } +b_2\sqrt{qr}  \quad \text{ and } \quad  2=(-1)^{\delta_{\nu, 1}}2^{\delta_{\nu, 2}}sb_1^2+
				(-1)^{\delta_{\nu, 2}} qrb_2^2.$$

			\end{enumerate}
			Here $b_1$ and $b_2$ are two integers.	
			\item  Let $x$ and $y$   be the integers such that
			$\varepsilon_{\rho qrs}=x+y\sqrt{\rho qrs}$. Then $2^{\delta_{\nu, 2}}q(x-1)$  is a square in $\NN $. Moreover, we have:
			$$\sqrt{\rho\varepsilon_{\rho qrs}}=  y_1\sqrt{q} +y_2\sqrt{\rho  rs}  \text{ and } \rho=-qy_1^2+\rho rsy_2^2.$$
		\end{enumerate}
	\end{lemma}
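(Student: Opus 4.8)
The plan is to run a Kubota-type descent on the fundamental unit $\varepsilon_{\nu qrs}=a+b\sqrt{\nu qrs}$. First I would note that $N(\varepsilon_{\nu qrs})=+1$: since each of $q,r,s\equiv 3\pmod 4$ divides $\nu qrs$, the value $-1$ cannot be a norm, so $a^2-\nu qrs\,b^2=1$ and hence $(a-1)(a+1)=\nu qrs\,b^2$. Replacing $\varepsilon_{\nu qrs}$ by its conjugate if necessary so that $b>0$, the classical identity $\big(\sqrt{a+1}+\sqrt{a-1}\big)^2=2a+2b\sqrt{\nu qrs}=2\varepsilon_{\nu qrs}$ shows that $\sqrt{2\varepsilon_{\nu qrs}}=\sqrt{a+1}+\sqrt{a-1}$. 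Writing $D_1,D_2$ for the square-free parts of $a+1$ and $a-1$, we get $a+1=D_1c_1^2$ and $a-1=D_2c_2^2$, whence $\sqrt{2\varepsilon_{\nu qrs}}=c_1\sqrt{D_1}+c_2\sqrt{D_2}$; the square root therefore decomposes automatically once the splitting $\{D_1,D_2\}$ of the square-free kernel is known.

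Next I would pin down $\{D_1,D_2\}$. The congruences $q\equiv 7$, $r\equiv s\equiv 3\pmod 8$ fix the parity of $a$ (by reducing $a^2=1+\nu qrs\,b^2$ modulo $8$) and hence decide whether the prime $2$ divides out of both $a\pm1$; this is exactly what produces the multipliers $2^{\delta_{\nu,1}},2^{\delta_{\nu,2}}$ and lets one pass between $\sqrt{\nu\varepsilon_{\nu qrs}}$ and $\sqrt{2\varepsilon_{\nu qrs}}$. Fundamentality of $\varepsilon_{\nu qrs}$ rules out the splitting that would make the relevant multiple a square in $\QQ(\sqrt{\nu qrs})$, and the residue hypotheses will in addition forbid the remaining degenerate distributions of the primes, leaving precisely the three patterns $\{r,\nu qs\}$, $\{\nu q,rs\}$, $\{\nu s,qr\}$ of items $a)$, $b)$, $c)$. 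In each pattern, squaring the decomposition and separating rational from irrational parts yields simultaneously the characterization that the corresponding integer $2^{\delta}\cdot(\text{partial product})\cdot(a\pm1)$ is a perfect square and the displayed relation, for instance $2^{\delta_{\nu,2}}=(-1)^{\delta_{\nu,1}}rb_1^2+(-1)^{\delta_{\nu,2}}\nu qs\,b_2^2$ in case $a)$.

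The heart of the argument is to show that exactly one pattern occurs and to match it with the hypotheses. For this I would decide, for each odd prime $\ell\in\{q,r,s\}$, whether $\ell\mid a-1$ or $\ell\mid a+1$ by evaluating the norm residue symbols $\left(\frac{\alpha,\beta}{v}\right)$ at the ramified places and invoking the product formula $\prod_v\left(\frac{\alpha,\beta}{v}\right)=1$; the data $\left(\frac{q}{r}\right)=\left(\frac{q}{s}\right)=(-1)^{\delta_{\nu,2}}$ and $\left(\frac{s}{r}\right)=1$ enter here to force the local symbols to be consistent for one and only one splitting. I expect this reciprocity-and-Hilbert-symbol bookkeeping, and in particular the tracking of the dyadic place through $\delta_{\nu,1},\delta_{\nu,2}$, to be the main obstacle: each prime is routine, but arranging that the three cases come out mutually exclusive and jointly exhaustive is delicate.

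Finally, for part $2)$ I would repeat the computation for $\varepsilon_{\rho qrs}=x+y\sqrt{\rho qrs}$ with $\rho\neq\nu$. Again $N(\varepsilon_{\rho qrs})=+1$ and $\sqrt{\rho\varepsilon_{\rho qrs}}=\sqrt{x+1}+\sqrt{x-1}$ decomposes along the square-free parts of $x\pm1$. The point is that, with $\nu$ and $\rho$ interchanged while the residue conditions stay phrased through $\nu$, the same symbol computation now forces $r$ and $s$ onto the same side of $x\pm1$, so the only admissible splitting peels off $q$ alone; this gives that $2^{\delta_{\nu,2}}q(x-1)$ is the perfect square and, on squaring $\sqrt{\rho\varepsilon_{\rho qrs}}=y_1\sqrt{q}+y_2\sqrt{\rho rs}$, the relation $\rho=-qy_1^2+\rho rs\,y_2^2$.
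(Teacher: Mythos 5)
Your plan follows essentially the same route as the paper: both start from $N(\varepsilon_{\nu qrs})=+1$, factor $(a-1)(a+1)=\nu qrs\,b^2$, distribute the primes $2,q,r,s$ between $a+1$ and $a-1$, and read off $\sqrt{2\varepsilon_{\nu qrs}}=\sqrt{a+1}+\sqrt{a-1}$ from the resulting splitting. The one substantive difference is that you defer the decisive step --- showing that exactly one of the three admissible splittings survives --- and flag it as a ``delicate'' piece of Hilbert-symbol bookkeeping via the product formula. In the paper this is in fact the bulk of the proof, and it is done more elementarily than you anticipate: for each candidate system one evaluates an ordinary Legendre symbol two ways, e.g.\ if $a-1=rsb_2^2$ then $\left(\frac{a+1}{r}\right)=\left(\frac{a-1+2}{r}\right)=\left(\frac{2}{r}\right)=-1$ since $r\equiv3\pmod8$, while the assumed form of $a+1$ forces the same symbol to equal $+1$; this kills four of the seven systems outright and leaves precisely the three patterns $\{r,\nu qs\}$, $\{\nu q,rs\}$, $\{\nu s,qr\}$ of items $a)$--$c)$ (and a single surviving system in part $2)$). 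So your outline is correct and compatible with the paper, but as written it stops short of the verification that makes the trichotomy ``mutually exclusive and jointly exhaustive''; to complete it you would need to run the seven-case elimination explicitly, which the given congruences $q\equiv7$, $r\equiv s\equiv3\pmod 8$ and $\left(\frac{q}{r}\right)=\left(\frac{q}{s}\right)=(-1)^{\delta_{\nu,2}}$ make routine rather than delicate.
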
	
	
	\begin{proof}
		Let us first assume that $\nu=2$ $($i.e. $\rho=1)$.
		\begin{enumerate}[$\bullet$]

			\item As $N(\varepsilon_{2qrs})=1$,  by the unique factorization  of $a^{2}-1=2qrsb^{2}$ in $\mathbb{Z}$, and \cite[Lemma 5]{Az-00}, there exist $b_1$ and $b_2$ in $\mathbb{Z}$ such that  we have one of the following systems:
			
			$$(1):\ \left\{ \begin{array}{ll}
				a\pm1=2qb_1^2\\
				a\mp1=rsb_2^2,
			\end{array}\right.  \quad
			(2):\ \left\{ \begin{array}{ll}
				a\pm1=2rb_1^2\\
				a\mp1=qsb_2^2,
			\end{array}\right. \quad
			(3):\ \left\{ \begin{array}{ll}
				a\pm1=2sb_1^2\\
				a\mp1=qrb_2^2,
			\end{array}\right. 
			$$

			$$   
			(4):\ \left\{ \begin{array}{ll}
				a\pm1=qb_1^2\\
				a\mp1=2rsb_2^2,
			\end{array}\right. \quad (5):\ \left\{ \begin{array}{ll}
				a\pm1=rb_1^2\\
				a\mp1=2qsb_2^2
			\end{array}\right. \quad (6):\ \left\{ \begin{array}{ll}
				a\pm1=sb_1^2\\
				a\mp1=2qrb_2^2.
			\end{array}\right. $$ $$ \text{ or } (7): \ \left\{ \begin{array}{ll}
				a\pm1=b_1^2\\
				a\mp1=2qrsb_2^2.
			\end{array}\right.
			$$
			\begin{enumerate}[$\star$]
				\item  Assume that the system $\left\{ \begin{array}{ll}
					a+1=2qb_1^2\\
					a-1=rsb_2^2,
				\end{array}\right.$ holds. We have:	
				\[1=\left(\dfrac{2qb_1^2}{r}\right)=\left(\dfrac{a+1}{r}\right)=\left(\dfrac{a-1+2}{r}\right)=\left(\dfrac{rsb_2^2+2}{r}\right)=\left(\dfrac{2}{r}\right)=-1,
				\]
				which is absurd. So this system can not hold.
				
				\item  Assume that the system $\left\{ \begin{array}{ll}
					a\pm1=2rb_1^2\\
					a\mp1=qsb_2^2,
				\end{array}\right.$ holds. We have:	
				\[1=\left(\dfrac{2rb_1^2}{s}\right)=\left(\dfrac{a+1}{s}\right)=\left(\dfrac{a-1+2}{s}\right)=\left(\dfrac{qsb_2^2+2}{s}\right)=\left(\dfrac{2}{s}\right)=-1,
				\] and 
				\[1=\left(\dfrac{2rb_1^2}{q}\right)=\left(\dfrac{a-1}{q}\right)=\left(\dfrac{a+1-2}{q}\right)=\left(\dfrac{qsb_2^2-2}{q}\right)=\left(\dfrac{-2}{q}\right)=-1,
				\]
				
				which is absurd. So this system can not hold.
				\item  Assume that the system $\left\{ \begin{array}{ll}
					a-1=2sb_1^2\\
					a+1=qrb_2^2,
				\end{array}\right.$ holds. We have:	
				\[1=\left(\dfrac{qrb_2^2}{s}\right)=\left(\dfrac{a+1}{s}\right)=\left(\dfrac{a-1+2}{s}\right)=\left(\dfrac{rsb_2^2+2}{s}\right)=\left(\dfrac{2}{s}\right)=-1,
				\]
				which is absurd. So this system can not hold.
				\item  Assume that the system $\left\{ \begin{array}{ll}
					a\pm1=qb_1^2\\
					a\mp1=2rsb_2^2,
				\end{array}\right.$ holds. We have:	
				\[1=\left(\dfrac{2rsb_2^2}{q}\right)=\left(\dfrac{a-1}{q}\right)=\left(\dfrac{a+1-2}{q}\right)=\left(\dfrac{qb_1^2-2}{q}\right)=\left(\dfrac{-2}{q}\right)=-1,
				\] and 
				\[-1=\left(\dfrac{qb_1^2}{r}\right)=\left(\dfrac{a-1}{r}\right)=\left(\dfrac{a+1-2}{r}\right)=\left(\dfrac{2rsb_2^2-2}{r}\right)=\left(\dfrac{-2}{r}\right)=1,
				\]
				which is absurd. So this system can not hold.
				\item  Assume that the system $\left\{ \begin{array}{ll}
					a-1=rb_1^2\\
					a+1=2qsb_2^2,
				\end{array}\right.$ holds. We have:	
				\[1=\left(\dfrac{rb_1^2}{q}\right)=\left(\dfrac{a-1}{q}\right)=\left(\dfrac{a+1-2}{q}\right)=\left(\dfrac{2qsb_2^2-2}{q}\right)=\left(\dfrac{-2}{q}\right)=-1,
				\]
				which is absurd. So this system can not hold.
				\item  Assume that the system $\left\{ \begin{array}{ll}
					a\pm1=sb_1^2\\
					a\mp1=2qrb_2^2,
				\end{array}\right.$ holds. We have:	
				\[1=\left(\dfrac{sb_1^2}{r}\right)=\left(\dfrac{a+1}{r}\right)=\left(\dfrac{a-1+2}{r}\right)=\left(\dfrac{2qrb_2^2+2}{r}\right)=\left(\dfrac{2}{r}\right)=-1,
				\] and 
				\[1=\left(\dfrac{sb_1^2}{q}\right)=\left(\dfrac{a-1}{q}\right)=\left(\dfrac{a+1-2}{q}\right)=\left(\dfrac{2qrb_2^2-2}{q}\right)=\left(\dfrac{-2}{q}\right)=-1,
				\]
				which is absurd. So this system can not hold.
				\item  Assume that the system $\left\{ \begin{array}{ll}
					a\pm1=b_1^2\\
					a\mp1=2qrsb_2^2,
				\end{array}\right.$ holds. We have:	
				\[1=\left(\dfrac{b_1^2}{r}\right)=\left(\dfrac{a+1}{r}\right)=\left(\dfrac{a-1+2}{r}\right)=\left(\dfrac{2qrsb_2^2+2}{r}\right)=\left(\dfrac{2}{r}\right)=-1,
				\] and 
				\[1=\left(\dfrac{b_1^2}{q}\right)=\left(\dfrac{a-1}{q}\right)=\left(\dfrac{a+1-2}{q}\right)=\left(\dfrac{2qrsb_2^2-2}{q}\right)=\left(\dfrac{-2}{q}\right)=-1,
				\]
				which is absurd. So this system can not hold.
			\end{enumerate}
			Then The only consistent remaining cases are : $$(1):\ \left\{ \begin{array}{ll}
				a-1=2qb_1^2\\
				a+1=rsb_2^2,
			\end{array}\right.  \quad
			(3):\ \left\{ \begin{array}{ll}
				a+1=2sb_1^2\\
				a-1=qrb_2^2,
			\end{array}\right. \quad
			(5):\ \left\{ \begin{array}{ll}
				a+1=rb_1^2\\
				a-1=2qsb_2^2,
			\end{array}\right. 
			$$
			Using these systems, we construct $\sqrt{\varepsilon_{2qrs}}$ in every case.

			\item For the second item, we proceed similarly and eliminate all systems except $\left\{ \begin{array}{ll}
				x-1=2qy_1^2\\
				x+1=2rsy_2^2,
			\end{array}\right. $ with $y_1$ and $y_2$ are two integers such that $y= 2y_1y_2$. So  gives the second item for $\nu=2$ $($i.e. $\rho=1)$.
		\end{enumerate}
		
		The proof is analogous for the case $\nu=1$.
	\end{proof}
	\begin{corollary}\label{corr}	Let $q\equiv    7\pmod 8$  and $r\equiv s\equiv     3\pmod 8$  be   three distinct prime numbers such that  $\left(\frac{q}{s}\right)=\left(\frac{q}{r}\right)=(-1)^{\delta_{\nu, 2}}$ 
		and $\left(\frac{s}{r}\right)=1$. Then  
		$h_2(F_{\nu,1})= h_2(\nu qrs)$ or $2h_2(\nu qrs)$. More precisely, we have:  
		$$h_2(F_{\nu,1})= h_2(\nu qrs) \text{ if and only if }  2^{\delta_{\nu, 2}}q(a-1)\not\in \NN^2.$$
		
	\end{corollary}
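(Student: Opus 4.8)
The plan is to evaluate $h_2(F_{\nu,1})$ by Wada's class number formula (Lemma \ref{wada's f.}) and then to read the resulting unit index off the explicit units produced in Lemma \ref{lemgen}. First I would observe that $F_{\nu,1}=F_\nu(\sqrt2)=\QQ(\sqrt2,\sqrt{qrs})$ is in fact independent of $\nu$, since $\sqrt{2qrs}=\sqrt2\,\sqrt{qrs}$; its three quadratic subfields are $\QQ(\sqrt2)$, $F_\nu=\QQ(\sqrt{\nu qrs})$ and $F_\rho=\QQ(\sqrt{\rho qrs})$, where $\rho=3-\nu$. Applying Lemma \ref{wada's f.} with $n=2$ (so $v=2$) and using $h_2(\QQ(\sqrt2))=1$ gives
$$h_2(F_{\nu,1})=\tfrac14\,q(F_{\nu,1})\,h_2(\nu qrs)\,h_2(\rho qrs).$$
Thus the corollary is equivalent to showing that $\tfrac14\,q(F_{\nu,1})\,h_2(\rho qrs)\in\{1,2\}$, with value $1$ exactly when $2^{\delta_{\nu,2}}q(a-1)\notin\NN^2$.

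Second, I would pin down $h_2(\rho qrs)$. The field $F_\rho$ has exactly the four ramified primes $2,q,r,s$, so its narrow genus group has rank $3$; the hypotheses $\left(\frac qr\right)=\left(\frac qs\right)=(-1)^{\delta_{\nu,2}}$ and $\left(\frac sr\right)=1$ are precisely the conditions that collapse this to $\rg(A(F_\rho))=2$ with trivial $4$-rank, whence $h_2(\rho qrs)=4$. Concretely I would obtain this from the ambiguous class number formula (Lemma \ref{AmbiguousClassNumberFormula}) applied to $F_\rho/\QQ$ together with norm-residue computations of the type used in \cite{Azmouh2-rank} and \cite{kaplan76}. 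With $h_2(\rho qrs)=4$ the target ratio becomes simply $q(F_{\nu,1})$, so it remains to show $q(F_{\nu,1})\in\{1,2\}$ and that $q(F_{\nu,1})=2$ iff $2^{\delta_{\nu,2}}q(a-1)\in\NN^2$.

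Third, I would compute the unit index $q(F_{\nu,1})=[E_{F_{\nu,1}}:E_{\QQ(\sqrt2)}E_{F_\nu}E_{F_\rho}]$ by Wada's algorithm (recalled after Lemma \ref{wada's f.}): $E_{F_{\nu,1}}$ is generated by the units of the three quadratic subfields together with the square roots of those products $\vep_2^{\,i}\vep_{\nu qrs}^{\,j}\vep_{\rho qrs}^{\,k}$ that are squares in $F_{\nu,1}$. A routine check rules out every product involving $\vep_2$ (for instance $\QQ(\sqrt{\vep_2})$ is the cyclic quartic field, not a subfield of the $V_4$-field $F_{\nu,1}$, as $N(\vep_2)=-1$), and since $\vep_{\nu qrs}$, $\vep_{\rho qrs}$ are fundamental with norm $+1$ and, by Lemma \ref{lemgen}, $2\vep_{\nu qrs}$ is never a square in $F_\nu$, none of $\sqrt{\vep_{\nu qrs}}$, $\sqrt{2\vep_{\nu qrs}}$, $\sqrt{\vep_{\rho qrs}}$, $\sqrt{2\vep_{\rho qrs}}$ lies in $F_{\nu,1}$. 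The only candidate that can enlarge the unit group is therefore the product $\vep_{\nu qrs}\vep_{\rho qrs}$. Here Lemma \ref{lemgen} is decisive: part $2)$ fixes the square class of $\vep_{\rho qrs}$ (the root $\sqrt{\rho\vep_{\rho qrs}}$ always lies in $\QQ(\sqrt q,\sqrt{\rho rs})$), while part $1)$ determines that of $\vep_{\nu qrs}$ through the trichotomy; the two square classes match—so that $\vep_{\nu qrs}\vep_{\rho qrs}$ becomes a square in $F_{\nu,1}$—exactly in case $b)$, i.e. when $2^{\delta_{\nu,2}}q(a-1)\in\NN^2$. In that case $q(F_{\nu,1})=2$, whereas in cases $a)$ and $c)$ the corresponding root $b_1\sqrt{\,\cdot\,}+b_2\sqrt{\,\cdot\,}$ lies in a biquadratic field different from $F_{\nu,1}$, forcing $q(F_{\nu,1})=1$.

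Combining the three steps gives $h_2(F_{\nu,1})=\tfrac14\,q(F_{\nu,1})\cdot h_2(\nu qrs)\cdot4=q(F_{\nu,1})\,h_2(\nu qrs)$, which is $h_2(\nu qrs)$ or $2h_2(\nu qrs)$ according as $2^{\delta_{\nu,2}}q(a-1)\notin\NN^2$ or $\in\NN^2$, as claimed. I expect the main obstacle to be the third step: precisely identifying which product of fundamental units becomes a square in $F_{\nu,1}$ requires carefully tracking square classes in $F_{\nu,1}^*/(F_{\nu,1}^*)^2$ and matching them against the explicit generators $\sqrt{\nu\vep_{\nu qrs}}$, $\sqrt{2\vep_{\nu qrs}}$, $\sqrt{\rho\vep_{\rho qrs}}$ supplied by Lemma \ref{lemgen}. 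Verifying that $h_2(\rho qrs)=4$ (and not $8$) under the stated residue conditions is the secondary technical point.
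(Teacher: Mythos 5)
Your proposal is correct and follows essentially the same route as the paper: Kuroda's class number formula for $F_{\nu,1}$, the identity $h_2(qrs)h_2(2qrs)=4h_2(\nu qrs)$ (which is exactly your claim $h_2(\rho qrs)=4$, justified in the paper by citation to the R\'edei-matrix computations of Benjamin--Snyder and Azizi--Mouhib rather than redone via the ambiguous class number formula), and the determination of $q(F_{\nu,1})\in\{1,2\}$ from the square classes of $\varepsilon_{qrs}$ and $\varepsilon_{2qrs}$ supplied by Lemma \ref{lemgen}. Your more detailed tracking of which product of fundamental units becomes a square in $F_{\nu,1}$ matches the paper's conclusion that a fundamental system is $\{\varepsilon_2,\varepsilon_{qrs},\sqrt{\varepsilon_{qrs}\varepsilon_{2qrs}}\}$ or $\{\varepsilon_2,\varepsilon_{qrs},\varepsilon_{2qrs}\}$ according as $2^{\delta_{\nu,2}}q(a-1)$ is or is not a square in $\NN$.
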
	
	\begin{proof}
		First notice that according to the previous lemma and Wada's method, a fundamental system of units of $F_{\nu,1}=\mathbb{Q}(\sqrt{qrs},\sqrt{2})$  is
		$\{\varepsilon_2,\varepsilon_{qrs},\sqrt{\varepsilon_{qrs}\varepsilon_{2qrs}} \}$ or $\{\varepsilon_2,\varepsilon_{qrs}, \varepsilon_{2qrs} \}$ according to whether $2^{\delta_{\nu, 2}}q(a-1)\in \NN^2$ or not. Thus, 
		$q(F_{\nu,1})=1$ or $2$ and 
		$$q(F_{\nu,1}) =1\text{ if and only if }  2^{\delta_{\nu, 2}}q(a-1)\not\in \NN^2.$$

		Notice that the class number formula gives	$h_2(F_{\nu,1})=\frac{q(F_{\nu,1})}{4} h_2(qrs)h_2({2qrs}).$
		On the other hand, we have    
		$h_2(qrs)h_2({2qrs})=4h_2({ \nu qrs})$ 
		(in fact one can proceed as in \cite[p. 164]{BenjShn(22)} by using Redei matrix method to get this or   one can consult   \cite[p. 40]{AzmouhcapitulationActaArith})). Thus,
		$h_2(F_{\nu,1})=q(F_{\nu,1})h_2(\nu qrs)$, hence the result.
	\end{proof}

	\medskip
	
	Now we can state the main result of this subsection.
	
	\medskip
	
	\begin{theorem}[{\bf The Third Main Theorem}]\label{etacorolaK} 
		Let $q\equiv    7\pmod 8$  and $r\equiv s\equiv     3\pmod 8$  be   three distinct prime numbers such that  $\left(\frac{q}{s}\right)=\left(\frac{q}{r}\right)=(-1)^{\delta_{\nu, 2}}$ 
		and $\left(\frac{s}{r}\right)=1$. 	Put      $ \varepsilon_{\nu qrs}=a+b\sqrt{\nu qrs}$, where $a$ and $b$ are integers.
		Assume   that  $2^{\delta_{\nu,2}}q(a-1)$   is {\bf not} a square in $\NN$. Then the following holds:
		\begin{enumerate}[$1)$]
			\item   Letting $m$ be the positive integer such that $h_2(\nu qrs)=2^m$, we have	 
			$$A_\infty(K_\nu)\simeq A(K_\nu)\simeq\ZZ/2 \ZZ\times\ZZ/2^{m-2} \ZZ .$$
			\item 	
			The Galois group of $\mathcal{L}(K_{\nu,\infty})/K_{\nu,\infty}$, with  $\mathcal{L}(K_{\nu,\infty})$ is the maximal unramified pro-$2$-extension    of $K_{\nu,\infty}$, is abelian.  
		\end{enumerate}
	\end{theorem}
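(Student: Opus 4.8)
The plan is to push the whole problem down to the quadratic base field $F_\nu=\QQ(\sqrt{\nu qrs})$ and to exploit that $K_{\nu,n}/F_{\nu,n}$ is an \emph{unramified} quadratic extension at every layer $n$. First I would pin down the class number along the tower of $F_\nu$. The hypothesis $2^{\delta_{\nu,2}}q(a-1)\notin\NN^2$ together with Corollary~\ref{corr} gives $h_2(F_{\nu,1})=h_2(\nu qrs)=2^m=h_2(F_\nu)$. Since $F_{\nu,\infty}/F_\nu$ is cyclotomic, the prime above $2$ is totally ramified from $F_\nu$ on, so Lemma~\ref{lm fukuda} applies with $n_0=0$ and yields $h_2(F_{\nu,n})=2^m$ for all $n\geq 0$. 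Because the norm maps in a totally ramified $\ZZ_2$-extension are surjective, equality of the orders forces them to be isomorphisms; hence $A(F_{\nu,n})\simeq A(F_\nu)$ for every $n$ and $A_\infty(F_\nu)\simeq A(F_\nu)$, a group of $2$-rank $2$ and order $2^m$.

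Next I would determine the group structure. As $q\equiv 7\pmod 8$, Lemma~\ref{refd2} shows that, at each layer, two of the three unramified quadratic extensions of $F_{\nu,n}$, namely $K_{\nu,n}'$ and $K_{\nu,n}''$, have \emph{cyclic} $2$-class group. Computing $h_2(K_{\nu,n}')=2^{m-1}$ by Wada's formula (Lemma~\ref{wada's f.}) fed by the explicit unit square roots of Lemma~\ref{lemgen}, I could invoke Proposition~\ref{LemBenjShn}: the $2$-class field tower of $F_{\nu,n}$ stops at $F_{\nu,n}^{(1)}$ and all three unramified quadratic extensions have $2$-class number $2^{m-1}$ (equivalently one applies Theorem~\ref{AabounePrzekhini}, and Lemma~\ref{refd1} then records that the $4$-rank is $1$). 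A finite abelian $2$-group of rank $2$ possessing a cyclic subgroup of index $2$ must be $\simeq\ZZ/2\ZZ\times\ZZ/2^{m-1}\ZZ$, so $A(F_{\nu,n})\simeq\ZZ/2\ZZ\times\ZZ/2^{m-1}\ZZ$ for all $n$.

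Now I would descend to $K_\nu$. Since the tower of $F_{\nu,n}$ stops and $K_{\nu,n}/F_{\nu,n}$ is unramified, $\mathcal{L}(K_{\nu,n})=\mathcal{L}(F_{\nu,n})=F_{\nu,n}^{(1)}$, so $A(K_{\nu,n})=\mathrm{Gal}(F_{\nu,n}^{(1)}/K_{\nu,n})$ is the index-$2$ subgroup of $A(F_{\nu,n})$ cut out by $K_{\nu,n}$. As $K_{\nu,n}'$ and $K_{\nu,n}''$ account for the two \emph{cyclic} index-$2$ subgroups, $K_{\nu,n}$ corresponds to the remaining, non-cyclic one, whence $A(K_{\nu,n})\simeq\ZZ/2\ZZ\times\ZZ/2^{m-2}\ZZ$ for every $n$; equal orders and surjectivity of the norm maps then give $A_\infty(K_\nu)\simeq A(K_\nu)\simeq\ZZ/2\ZZ\times\ZZ/2^{m-2}\ZZ$, which is item $1)$. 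For item $2)$, the stopping of each tower makes $F_{\nu,n}^{(1)}=\mathcal{L}(K_{\nu,n})$ abelian over $K_{\nu,n}$, so every $\mathrm{Gal}(\mathcal{L}(K_{\nu,n})/K_{\nu,n})$ is abelian; as $\mathcal{L}(K_{\nu,\infty})$ is the union of the fields $\mathcal{L}(K_{\nu,n})K_{\nu,\infty}$ and a union of abelian extensions of a fixed base is abelian, $\mathrm{Gal}(\mathcal{L}(K_{\nu,\infty})/K_{\nu,\infty})$ is abelian.

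The hard part will be the uniform-in-$n$ class-number computation $h_2(K_{\nu,n}')=2^{m-1}$ that triggers Proposition~\ref{LemBenjShn} at every layer: this is precisely where the congruence and quadratic-residue conditions on $q,r,s$, together with the non-square hypothesis $2^{\delta_{\nu,2}}q(a-1)\notin\NN^2$, enter through Wada's method and the norm-residue-symbol bookkeeping behind Lemma~\ref{lemgen}; one must also verify that this computation is insensitive to $n$ (for instance by stabilizing $h_2(K_{\nu,n}')$ through Lemma~\ref{lm fukuda} applied to the tower of $K_\nu'$). A secondary technical point is the identification $\mathcal{L}(K_{\nu,\infty})=\bigcup_n \mathcal{L}(K_{\nu,n})K_{\nu,\infty}$, i.e. that every finite unramified $2$-extension of $K_{\nu,\infty}$ already descends to, and is unramified over, some finite layer.
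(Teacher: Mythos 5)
Your skeleton is essentially the paper's: stabilize $2$-class numbers along the tower with Fukuda's theorem, trigger Proposition \ref{LemBenjShn} with an unramified quadratic extension of $F_{\nu,n}$ having half its $2$-class number, combine Lemmas \ref{refd1} and \ref{refd2} to get $A(F_{\nu,n})\simeq\ZZ/2\ZZ\times\ZZ/2^{m-1}\ZZ$, and read off $A(K_{\nu,n})$ from Theorem \ref{AabounePrzekhini}. The genuine gap is that you never carry out the one computation on which everything hinges. You route it through $h_2(K'_{\nu,n})=2^{m-1}$ and defer it as ``the hard part'', but Lemma \ref{lemgen}, which you propose to feed into Wada's method, is tailored to $K_{\nu,1}=\QQ(\sqrt{2},\sqrt{\nu q},\sqrt{rs})$: its square-root decompositions of $\varepsilon_{\nu qrs}$ and $\varepsilon_{\rho qrs}$ live in that field. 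For $K'_{\nu,1}=\QQ(\sqrt{2},\sqrt{\nu r},\sqrt{qs})$ you would need fundamental systems of units of $\QQ(\sqrt{2},\sqrt{\nu r})$ and $\QQ(\sqrt{2},\sqrt{qs})$ and a fresh norm-residue analysis of which products of units become squares there; none of that is in your proposal, so the argument is incomplete exactly at its technical core.

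The paper avoids your detour by computing $h_2(K_{\nu,n})$ itself. Kuroda's formula (Lemma \ref{wada's f.}) at level $0$ with $q(K_\nu)=2$, and at level $1$ with $q(K_{\nu,1})=2^5$ --- the latter being the whole content of Lemma \ref{lem2}, proved by Wada's method from the unit data of Lemma \ref{lemgen} --- give $h_2(K_{\nu,0})=h_2(K_{\nu,1})=\tfrac12 h_2(\nu qrs)=2^{m-1}$; Fukuda then stabilizes this for all $n$, and since $h_2(F_{\nu,n})=2^m$ (Corollary \ref{corr} plus Fukuda, as in your first paragraph), $K_{\nu,n}$ itself is already the unramified quadratic extension with $2$-class number $\tfrac12 h_2(F_{\nu,n})$ that Proposition \ref{LemBenjShn} requires. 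Your remaining steps --- the identification of $K_{\nu,n}$ with the non-cyclic index-$2$ subgroup via Lemma \ref{refd2} and Theorem \ref{AabounePrzekhini}, and the limit argument for item $2)$ (which you actually spell out more carefully than the paper does) --- are sound. The shortest repair of your proof is therefore to replace the unperformed computation of $h_2(K'_{\nu,n})$ by the paper's computation of $q(K_{\nu,1})$.
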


	\medskip
	
Before giving the proof of this theorem, we first prove the following lemma.

	\medskip

	\begin{lemma}\label{lem2}
		Let $q\equiv    7\pmod 8$  and $r\equiv s\equiv     3\pmod 8$  be   three distinct prime numbers such that  $\left(\frac{q}{s}\right)=\left(\frac{q}{r}\right)=(-1)^{\delta_{\nu, 2}}$ 
	and $\left(\frac{s}{r}\right)=1$. 	Let       $ \varepsilon_{\nu qrs}=a+b\sqrt{\nu qrs}$, where $a$ and $b$ are integers, and let   $\rho \in\{1,2\}$ with $\rho\not= \nu$. Assume that $2^{\delta_{\nu, 2}}q(a-1)$ is {\bf not} a square in $\NN$.
		$$E_{K_{\nu,1}} =\langle-1,   \varepsilon_2,  \varepsilon_{rs}     ,\sqrt{\varepsilon_{q}}, \sqrt{\varepsilon_{2q}}, \sqrt{\varepsilon_{2rs}},\sqrt{\varepsilon_{rs}\varepsilon_{\nu qrs}}, \sqrt{ \varepsilon_{\rho qrs}} \rangle.$$
		Thus, $q(K_{\nu,1})=2^5$.
	\end{lemma}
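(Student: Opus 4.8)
The first observation is that $K_{\nu,1}$ does not really depend on $\nu$: since $\sqrt2\in K_{\nu,1}$ we have $\sqrt{2q}=\sqrt2\,\sqrt q$, so in both cases $K_{\nu,1}=\QQ(\sqrt2,\sqrt q,\sqrt{rs})$, a totally real multiquadratic field of degree $8$ and unit rank $7$. Its seven quadratic subfields are the $\QQ(\sqrt m)$ with $m\in\{2,q,rs,2q,2rs,qrs,2qrs\}$, carrying the fundamental units $\varepsilon_2,\varepsilon_q,\varepsilon_{rs},\varepsilon_{2q},\varepsilon_{2rs},\varepsilon_{qrs},\varepsilon_{2qrs}$; note that $\{\varepsilon_{\nu qrs},\varepsilon_{\rho qrs}\}=\{\varepsilon_{qrs},\varepsilon_{2qrs}\}$. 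The plan is to compute $E_{K_{\nu,1}}$ by Wada's method (\cite{wada}): choosing two order-two automorphisms whose fixed fields are biquadratic subfields with known unit groups, $E_{K_{\nu,1}}$ is generated by the unit groups of those three subfields together with the square roots of the elements of their product that happen to be squares in $K_{\nu,1}$. The unit index $q(K_{\nu,1})=[E_{K_{\nu,1}}:\prod_m E_{\QQ(\sqrt m)}]$ is then read off from the resulting fundamental system.

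The heart of the proof is to produce exactly the five square roots in the statement. Three of them are internal to biquadratic subfields, and I would quote the corresponding known unit groups: for $q\equiv7\pmod8$ the field $\QQ(\sqrt2,\sqrt q)$ yields $\sqrt{\varepsilon_q}$ and $\sqrt{\varepsilon_{2q}}$, and for $rs\equiv1\pmod8$ the field $\QQ(\sqrt2,\sqrt{rs})$ yields $\sqrt{\varepsilon_{2rs}}$. The remaining two are genuinely octic and come from Lemma \ref{lemgen}. For $\sqrt{\varepsilon_{\rho qrs}}$, part $2)$ of that lemma gives $\sqrt{\rho\,\varepsilon_{\rho qrs}}=y_1\sqrt q+y_2\sqrt{\rho rs}$, which lies in $\QQ(\sqrt q,\sqrt{\rho rs})\subset K_{\nu,1}$; dividing by $\sqrt\rho$ keeps $\sqrt{\varepsilon_{\rho qrs}}$ inside $K_{\nu,1}$.

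The delicate case is $\sqrt{\varepsilon_{rs}\varepsilon_{\nu qrs}}$, and this is precisely where the hypothesis that $2^{\delta_{\nu,2}}q(a-1)$ is \emph{not} a square intervenes: it excludes case $b)$ of part $1)$ of Lemma \ref{lemgen}, forcing case $a)$ or $c)$, in which $\sqrt{\nu\varepsilon_{\nu qrs}}=b_1\sqrt r+b_2\sqrt{\nu qs}$ or $\sqrt{2\varepsilon_{\nu qrs}}=b_1\sqrt{\nu s}+b_2\sqrt{qr}$. Each of these lies in a field containing $\sqrt r$ or $\sqrt s$ separately, hence \emph{not} in $K_{\nu,1}$; in particular $\varepsilon_{\nu qrs}$ is not itself a square there. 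The rescue is that the parallel factorization of $\varepsilon_{rs}$, available because $r\equiv s\equiv3\pmod8$ and $\left(\frac sr\right)=1$, puts $\sqrt{2\varepsilon_{rs}}$ into $\QQ(\sqrt r,\sqrt s)$ as well; multiplying the two explicit radicals, the offending factors $\sqrt r$ and $\sqrt s$ recombine into $\sqrt{rs}$, $\sqrt q$ and their $\sqrt2$-multiples, all of which lie in $K_{\nu,1}$. I would carry this out by expanding the product and checking that every monomial reduces to one of $\sqrt2,\sqrt{2q},\sqrt{2rs},\sqrt{2qrs}$ up to a rational factor, so that $\sqrt{\varepsilon_{rs}\varepsilon_{\nu qrs}}\in K_{\nu,1}$.

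Once the generating set is in hand, the index is immediate: $\varepsilon_2$ and $\varepsilon_{rs}$ survive unsquared, while $\varepsilon_q,\varepsilon_{2q},\varepsilon_{2rs},\varepsilon_{\rho qrs}$ and the product $\varepsilon_{rs}\varepsilon_{\nu qrs}$ become squares, so expressing the seven quadratic fundamental units in the new basis gives a lower-triangular transition matrix with diagonal $(1,1,2,2,2,2,2)$ and determinant $2^5$, whence $q(K_{\nu,1})=2^5$. The main obstacle is not this bookkeeping but the \emph{completeness} of the list of square roots: via Wada's method one must rule out every further class of $\prod_m E_{\QQ(\sqrt m)}$ modulo squares from becoming a square in $K_{\nu,1}$ (for instance any hidden square root inside $\QQ(\sqrt2,\sqrt{qrs})$ or involving $\varepsilon_2$). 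I expect to settle this exactly as in the proof of Lemma \ref{realTruquad2-rankq1q1}, by evaluating norm-residue symbols and using the congruence conditions together with the precise location of each radical relative to $K_{\nu,1}$, which is what the explicit forms in Lemma \ref{lemgen} are designed to provide.
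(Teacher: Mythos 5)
Your proposal follows essentially the same route as the paper: Wada's method applied to $K_{\nu,1}=\QQ(\sqrt2,\sqrt q,\sqrt{rs})$ with the three biquadratic subfields $\QQ(\sqrt2,\sqrt q)$, $\QQ(\sqrt2,\sqrt{rs})$, $\QQ(\sqrt2,\sqrt{qrs})$, Lemma \ref{lemgen} supplying $\sqrt{\varepsilon_{\rho qrs}}$ and (via the exclusion of case $b)$ and the recombination of $\sqrt r,\sqrt s$ with the radical of $\varepsilon_{rs}$) the product $\sqrt{\varepsilon_{rs}\varepsilon_{\nu qrs}}$, and the same determinant count giving $q(K_{\nu,1})=2^5$. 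The only (minor) divergence is in the completeness step you leave sketched: the paper rules out any further square roots not by norm-residue symbols but by applying the norm maps $1+\tau$ from $K_{\nu,1}$ to its biquadratic subfields to a generic candidate $\chi^2=\varepsilon_2^a\varepsilon_{rs}^b\varepsilon_{2qrs}^c\sqrt{\varepsilon_q}^d\sqrt{\varepsilon_{2q}}^e\sqrt{\varepsilon_{2rs}}^f$ and using that the relevant units are not squares in those subfields, which is the standard and effective way to finish.
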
	
	\begin{proof}
		We prove our lemma in the case where $\nu=2$, for the other case we proceed similarly. Let $ q\equiv     7\pmod 8$ and  $r\equiv s\equiv    5\pmod 8$   be   prime numbers such that $\left(\frac{q}{r}\right)=\left(\frac{q}{s}\right)=  -1 $ and 
		$\left(\frac{s}{r}\right)=1$. We shall use the method of Wada  presented in Page  \pageref{algo wada}. Consider the three  biquadratic subfields of $K_{\nu,1}$ defined by
		$k_1=\mathbb{Q}(\sqrt{2},  \sqrt{q})$, $k_2=\mathbb{Q}(\sqrt{2},  \sqrt{rs})$ and $k_3=\mathbb{Q}(\sqrt{2},  \sqrt{qrs})$. 
		Let $\tau_1$, $\tau_2$ and $\tau_3$ be the elements of  $ \mathrm{Gal}(K_{\nu,1}/\QQ)$ defined by
		\begin{center}	\begin{tabular}{l l l }
				$\tau_1(\sqrt{2})=-\sqrt{2}$, \qquad & $\tau_1(\sqrt{q})=\sqrt{q}$, \qquad & $\tau_1(\sqrt{rs})=\sqrt{rs},$\\
				$\tau_2(\sqrt{2})=\sqrt{2}$, \qquad & $\tau_2(\sqrt{q})=-\sqrt{q}$, \qquad &  $\tau_2(\sqrt{rs})=\sqrt{rs},$\\
				$\tau_3(\sqrt{2})=\sqrt{2}$, \qquad &$\tau_3(\sqrt{q})=\sqrt{q}$, \qquad & $\tau_3(\sqrt{rs})=-\sqrt{rs}.$
			\end{tabular}
		\end{center} 
		Notice that  $\mathrm{Gal}(K_1/\QQ)=\langle \tau_1, \tau_2, \tau_3\rangle$
		and that the subfields  $k_1$, $k_2$ and $k_3$ are
		fixed by  $\langle \tau_3\rangle$, $\langle\tau_2\rangle$ and $\langle\tau_2\tau_3\rangle$ respectively. Therefore according to Wada's algorithm a fundamental system of units  of $K_{\nu,1}$ consists  of seven  units chosen from those of $k_1$, $k_2$ and $k_3$, and  from the square roots of the elements of $E_{k_1}E_{k_2}E_{k_3}$ which are squares in $K_1$.	  
		We have:  
		\begin{enumerate}[$\bullet$]
			\item A fundamental system of units  of 
			$k_1$ is $\{\varepsilon_2,\sqrt{\varepsilon_{q}}, \sqrt{\varepsilon_{2q}}\}$. In fact, we have $ \sqrt{2\varepsilon_{q}}=\alpha_1 +\alpha_2\sqrt{q})$ and $ \sqrt{\varepsilon_{2q}}=\alpha_3\sqrt{2}+\alpha_4\sqrt{q}$ for some integers $\alpha_i$  (cf. \cite[Lemma 6]{BenSnyder25PartII} or \cite{Az-00}).
			
			\item A fundamental system of units  of 
			$k_2$ is $\{\varepsilon_2,\sqrt{\varepsilon_{rs}}, \sqrt{\varepsilon_{2rs}}\}$. In fact, we have $ \sqrt{2\varepsilon_{2rs}}=\beta_1+\beta_2\sqrt{2rs}$ and $ \sqrt{\varepsilon_{rs}}=\beta_3\sqrt{r}+\beta_4\sqrt{s}$ for some integers  $\beta_i$  (cf. \cite[Lemma 4]{aczIJM}).

			\item By Corollary \ref{corr} a fundamental system of units of $k_3$ is   $\{\varepsilon_2,\varepsilon_{qrs}, \varepsilon_{2qrs}\}$ since $2q(a-1)$ is not a square in $\NN$, where  $a$   and $b$  are the integers such that $ \varepsilon_{ 2qrs}=a+b\sqrt{2 qrs}$.
		\end{enumerate}

		So 
		\begin{equation}
			E_{k_1}E_{k_2}E_{k_3}=\langle-1,  \varepsilon_{2}, \varepsilon_{rs},\varepsilon_{qrs}, \varepsilon_{2qrs} ,\sqrt{\varepsilon_{q}},\sqrt{\varepsilon_{2q}},\sqrt{\varepsilon_{2rs}}\rangle.
		\end{equation}
		
		Notice that according to Lemma \ref{lemgen} we have $\sqrt{\varepsilon_{ qrs}}\in K_{\nu,1}$. 
		Let us find the elements $\chi$   of $ K_{\nu,1}$ which are the  square root of an element of $E_{k_1}E_{k_2}E_{k_3}$. 
		Therefore, we can assume that		
		\begin{equation}\label{equa1}
			\chi^2=\varepsilon_{2}^a\varepsilon_{rs}^b \varepsilon_{2qrs}^c\sqrt{\varepsilon_{q}}^d\sqrt{\varepsilon_{2q}}^e\sqrt{\varepsilon_{2rs}}^f
		\end{equation}
		
		where $a, b, c, d, e$ and $f$ are in $\{0, 1\}$. We shall use the norm maps from $K$ to its subfields to eliminate  the equations  which do not occur.  
		
		\noindent\ding{229} By   applying the norm $N_{K_{\nu,1}/\QQ(\sqrt{q},\sqrt{rs})}=1+\tau_1$, we get:
		\begin{eqnarray*}
			N_{K_{\nu,1}/\QQ(\sqrt{q},\sqrt{rs})}(\chi^2)&=& (-1)^a \cdot\varepsilon_{rs}^{2b}\cdot (-\varepsilon_{q})^d\cdot (-1)^{e}. 	\end{eqnarray*}
		Thus $a+d+ e \equiv 0\pmod2$. As $\varepsilon_q$ is not a square in $\QQ(\sqrt{q},\sqrt{rs})$, we get $d=0$. Then  $a=e$.
		Therefore, we have:
		\begin{equation*} 
			\chi^2=\varepsilon_{2}^a\varepsilon_{rs}^b \varepsilon_{2qrs}^c\sqrt{\varepsilon_{2q}}^a\sqrt{\varepsilon_{2rs}}^f.
		\end{equation*}
		\noindent\ding{229} By   applying the norm $N_{K_{\nu,1}/\QQ(\sqrt{2q},\sqrt{rs})}=1+\tau_1\tau_2$, we get:
		\begin{eqnarray*}
			N_{K_{\nu,1}/\QQ(\sqrt{2q},\sqrt{rs})}(\chi^2)&=& (-1)^a \cdot\varepsilon_{rs}^{2b}\cdot (-\varepsilon_{2q})^f.
		\end{eqnarray*}
		As $\varepsilon_{2q}$ is not a square in $\QQ(\sqrt{2q},\sqrt{rs})$, we get $a=0$. Thus, we have:
		\begin{equation*}
			\chi^2=\varepsilon_{rs}^b \varepsilon_{2qrs}^c\sqrt{\varepsilon_{2rs}}^f.
		\end{equation*}
		
		\noindent\ding{229} By   applying the norm $N_{K_{\nu,1}/\QQ(\sqrt{q},\sqrt{2rs})}=1+\tau_1\tau_3$, we get:
		\begin{eqnarray*}
			N_{K_{\nu,1}/\QQ(\sqrt{q},\sqrt{2rs})}(\chi^2)&=& \varepsilon_{2qrs}^{2c}\cdot (-\varepsilon_{2rs})^f.
		\end{eqnarray*}
		Thus we have $f=0$ and :
		\begin{equation*}
			\chi^2=\varepsilon_{rs}^b \varepsilon_{2qrs}^c.
		\end{equation*}
		Notice that  $\sqrt{\varepsilon_{rs}}, \sqrt{\varepsilon_{2 qrs}} \notin K_{\nu,1}$ (cf. Lemma \ref{lemgen}) and  $\sqrt{\varepsilon_{rs}}\sqrt{\varepsilon_{2 qrs}}\in K_{\nu,1}$  (cf. Lemma \ref{lemgen}). Thus, the only elements of $E_{k_1}E_{k_2}E_{k_3}$ which are squares in $K_{\nu,1}$ are ${\varepsilon_{rs}}{\varepsilon_{\eta qrs}}$ and ${\varepsilon_{qrs}}$. Hence,

		$$E_{K_{\nu,1}}=\langle -1, \varepsilon_2,\varepsilon_{rs}, \varepsilon_{2qrs}, \sqrt{\varepsilon_q},\sqrt{\varepsilon_{2q}},\sqrt{\varepsilon_{2rs}}, \sqrt{\varepsilon_{ qrs}},\sqrt{\varepsilon_{rs}\varepsilon_{ 2qrs}} \rangle.$$

	\end{proof}

	\begin{proof}[{\bf Proof of Theorem \ref{etacorolaK}}]  As  $h_2(\nu q)=h_2(rs)=1$ (cf. \cite[Corollary 18.4]{connor88}), we have
		$$h_2(K_\nu)=\frac{1}{4}q(K_\nu)  h_2(\nu qrs).$$
		
		By making use of Lemma \ref{lemgen} and Wada's method, we check that $\{\varepsilon_{\nu q},\varepsilon_{rs},\varepsilon\} $, with 
		$\varepsilon=\sqrt{\varepsilon_{rs}\varepsilon_{\nu qrs}}$ or $\sqrt{\varepsilon_{\nu q}\varepsilon_{\nu qrs}}$, is a fundamental system of units of $K_\nu$.
		Thus $q(K_\nu)=2$ and so $h_2(K_\nu)=\frac{1}{2} h_2(\nu qrs)$.
		Moreover, by applying the  class number formula  to $K_{\nu,1}$,    we get:
		\begin{eqnarray*} 
			h_2(K_{\nu,1})&=&\frac{1}{2^{9}}q(K_{\nu,1})  h_2(q) h_2(2q )h_2(rs) h_2(2rs)h(2)  h_2(qrs) h_2(2qrs)= \frac{1}{2} h_2(\nu qrs) \nonumber 
		\end{eqnarray*}
		In fact, we have  $q(K_{\nu,1})=2^5$ (cf.  Lemma \ref{lem2}), $h_2(q) =h_2(2q )=h(2)=h_2(rs) =1$ (cf. \cite[Corollary 18.4]{connor88}) and $h_2(2rs)=2 $ (cf.  \cite[Corollary 19.7]{connor88}) and $h_2(qrs)h_2({2qrs})=4h_2({ \nu qrs})$ (cf. the proof of Corollary \ref{corr}).
		
		Let $n\geq0$ be an integer.	It follows by Fukuda's theorem, we have $h_2(K_{\nu, n})=\frac{1}{2}h_2(\nu qrs) $. 
		Therefore,   according to  Corollary \ref{corr}, $h_2(K_{\nu, n})=  \frac{1}{2}h_2(F_{\nu, n})$ and so by Proposition \ref{LemBenjShn} the Hilbert $2$-class field tower of 
		$K$ stops at the first layer.

		Notice that by Lemma \ref{refd2}, $F_{\nu, n}$ admits an unramified quadratic extension of   cyclic $2$-class group, then Lemma \ref{refd1} gives $A_\infty(F_\nu)\simeq A(F_\nu)\simeq\ZZ/2 \ZZ\times\ZZ/2^{m-1} \ZZ  $, for all $n\geq0$.
		It follows that, by   for all $n\geq0$,  $A(K_n)\simeq\ZZ/2 \ZZ\times\ZZ/2^{m-2} \ZZ$   (cf. Theorem \ref{AabounePrzekhini}), which completes the proof.
		
	\end{proof}
	
	The following is a direct deduction from the above proof and Theorem \ref{AabounePrzekhini}.
	\begin{corollary}
			Let $q\equiv    7\pmod 8$  and $r\equiv s\equiv     3\pmod 8$  be   three distinct prime numbers such that  $\left(\frac{q}{s}\right)=\left(\frac{q}{r}\right)=(-1)^{\delta_{\nu, 2}}$ 
		and $\left(\frac{s}{r}\right)=1$. 	Put      $ \varepsilon_{\nu qrs}=a+b\sqrt{\nu qrs}$, where $a$ and $b$ are integers.
		Assume   that  $2^{\delta_{\nu,2}}q(a-1)$   is {\bf not} a square in $\NN$. Then 
		$$A_\infty(K_\nu')\simeq A(K_\nu')\simeq A_\infty(K_\nu'')\simeq A(K_\nu'') \simeq   \ZZ/2^{m-1} \ZZ.$$
	 Here $K_\nu'=\mathbb{Q}(\sqrt{\nu r},\sqrt{qs})$, $K_\nu''=\mathbb{Q}(\sqrt{\nu s},\sqrt{qr})$ and $m$ is the positive integer such that $h_2(\nu qrs)=2^m$.
	\end{corollary}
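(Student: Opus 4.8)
The plan is to read off the structure of $A(K'_\nu)$ and $A(K''_\nu)$ from the group-theoretic dictionary of Theorem \ref{AabounePrzekhini}, applied to the base field $F_{\nu,n}$, whose Hilbert $2$-class field tower was shown to stop at the first layer in the proof of Theorem \ref{etacorolaK}. First I would record that for every $n\geq 0$ the proof of Theorem \ref{etacorolaK} yields $A(F_{\nu,n})\simeq\ZZ/2\ZZ\times\ZZ/2^{m-1}\ZZ$, and that by Lemma \ref{refd2} the three unramified quadratic extensions of $F_{\nu,n}$ are precisely $K_{\nu,n}$, $K'_{\nu,n}$ and $K''_{\nu,n}$. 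Since the tower of $F_{\nu,n}$ stops at the first layer, Proposition \ref{LemBenjShn} forces all three of these extensions to have $2$-class number $2^{m-1}$.

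Next I would invoke Theorem \ref{AabounePrzekhini} with $k=F_{\nu,n}$, where the relevant parameter is $m-1$ because $A(F_{\nu,n})\simeq\ZZ/2\ZZ\times\ZZ/2^{m-1}\ZZ$. Abelianness of $G_{F_{\nu,n}}$ (equivalent to the tower stopping, by item $(6)$) makes items $(2)$ and $(4)$ available: exactly one of the three unramified quadratic extensions, namely $K_{3,2}$, has $2$-class group $\ZZ/2\ZZ\times\ZZ/2^{m-2}\ZZ$, while the remaining two, $K_{1,2}$ and $K_{2,2}$, have cyclic $2$-class group of order $2^{m-1}$. The crucial matching step is to identify $K_{\nu,n}$ with $K_{3,2}$: Theorem \ref{etacorolaK} already gives $A(K_{\nu,n})\simeq\ZZ/2\ZZ\times\ZZ/2^{m-2}\ZZ$, and this structure singles out $K_{3,2}$ among the three extensions. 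Hence $\{K'_{\nu,n},K''_{\nu,n}\}=\{K_{1,2},K_{2,2}\}$, so $A(K'_{\nu,n})\simeq A(K''_{\nu,n})\simeq\ZZ/2^{m-1}\ZZ$ for all $n\geq 0$; taking $n=0$ recovers the claimed structure of $A(K'_\nu)$ and $A(K''_\nu)$. The cyclicity here is also directly guaranteed by Lemma \ref{refd2}, which serves as a useful consistency check.

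Finally, for the Iwasawa modules I would pass to the inverse limit. Since $K'_{\nu,\infty}/K'_\nu$ and $K''_{\nu,\infty}/K''_\nu$ are totally ramified cyclotomic $\ZZ_2$-extensions and $h_2(K'_{\nu,n})=h_2(K''_{\nu,n})=2^{m-1}$ is independent of $n$, Fukuda's theorem (Lemma \ref{lm fukuda}) shows the $2$-class numbers are stable; as the norm maps $A(K'_{\nu,n+1})\to A(K'_{\nu,n})$ are surjective (total ramification) between finite groups of equal order, they are isomorphisms. Passing to the limit gives $A_\infty(K'_\nu)\simeq A(K'_\nu)\simeq\ZZ/2^{m-1}\ZZ$, and likewise for $K''_\nu$.

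I expect the only genuinely delicate point to be the identification of $K_{\nu,n}$ with the distinguished extension $K_{3,2}$ carrying the non-cyclic class group; once the structural result $A(K_{\nu,n})\simeq\ZZ/2\ZZ\times\ZZ/2^{m-2}\ZZ$ from Theorem \ref{etacorolaK} is in place, this identification is forced, and the remaining two extensions are compelled to be the cyclic ones $K_{1,2}$ and $K_{2,2}$, whence everything else is routine.
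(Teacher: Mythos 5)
Your proposal is correct and follows essentially the same route as the paper, which states the corollary as ``a direct deduction from the above proof and Theorem \ref{AabounePrzekhini}'': you use the abelianness of the tower of $F_{\nu,n}$, identify $K_{\nu,n}$ with the distinguished non-cyclic extension $K_{3,2}$ via the structure result of Theorem \ref{etacorolaK}, read off cyclicity of order $2^{m-1}$ for the other two from items $(2)$--$(5)$ of Theorem \ref{AabounePrzekhini}, and pass to the limit by stability. The only detail left implicit (in both your argument and the paper's) is that $m-1\geq 2$, which is needed to invoke Theorem \ref{AabounePrzekhini} and which follows from Lemma \ref{refd1} giving $4$-rank $1$ for $A(F_\nu)$.
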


		


	\section*{\bf Acknowledgment}
 	We would  like to thank Katharina M\"uller for   her useful feedback this paper.

\end{document}